\newtheorem{teo}{Theorem}[section]
\newtheorem{oss}[teo]{Remark}
\newtheorem{war}[teo]{Warning}
\newtheorem{Prop}[teo]{Proposition}
\newtheorem{lemma}[teo]{Lemma}
\newtheorem{Defi}[teo]{Definition}
\newtheorem{es}[teo]{Example}
\newtheorem{corollario}[teo]{Corollary}
\newtheorem{no}[teo]{Notation}
\newtheorem{claim}[teo]{Claim}
\newcommand{\cc}{_{^{_\HH}}}
\newcommand{\vv}{_{^{_\VV}}}
\newcommand{\res}{\mathop{\hbox{\vrule height 7pt width .5pt depth 0pt
\vrule height .5pt width 6pt depth 0pt\,}}\nolimits}
\def \op{^\bot}
\def \ot{^\top}
\def \DH{h}
\newcommand{\iu}{\imath}
\newcommand{\LL}{\mathop{\hbox{\vrule height .5pt width 6pt depth
0pt \vrule height 7pt width .5pt depth 0pt\,}}\nolimits}
\newcommand{\rr}{_{^{_\mathcal{R}}}}
\newcommand{\ngr}{_{^{_{\mathrm Gr}}}}
\newcommand{\ci}{_{^{_{\HH_i}}}}
\newcommand{\ctr}{_{^{_{\HH_3}}}}
\newcommand{\cd}{_{^{_{\HH_2}}}}
\def \spp{_{^{_{{\pitchfork}}}}}
\newcommand{\ct}{_{^{_{\HH_t}}}}
\def \cin{{\mathbf{C}^{\infty}}}
\def\dim {\mathrm{dim}}
\def\dc {d\cc}
\def\ss{_{^{_{\HS}}}}
\def \cont{{\mathbf{C}}}
\def\sst{_{^{_{\HS_t}}}}
\def\g{\mathit{g}\cc}
\def\f{{f}\cc}
\def\eu {_{^{_{\rm Eu}}}}
\def\dg{\textit{grad}\cc}
\def\qq{\textit{grad}\ss}
\def\ts{_{^{_{\TT S}}}}
\def\tm{_{^{_{\TT M}}}}
\def \nis {\sigma^{n-2}\cc}
\def \per {\sigma^{n-1}\cc}
\def\tut{_{^{_{\TT \UU_t}}}}
\def\tu{_{^{_{\TT \UU}}}}
\def \cn{\textit{w}}
\def\WW{\widetilde{W}}
\def\SC{{C^{\gg}}}
\def\UU{\mathcal{U}}
\def \TB{\tau^{_{\TT\!S}}}
\def \nn{\nu\cc}
\def \nt{\nu^t\cc}
\def \XH{\mathfrak{X}(\HH)}
\def \XX{\mathfrak{X}}
\def \MH{\mathcal{H}\cc}
\def \MS{\mathcal{H}\cc}
\def \MST{(\MS)_t}
\def \P{{\mathcal{P}}}
\def \PH{\P\cc}
\def\ww{\widetilde{w}}
\def \Om{\Omega}
\def \Lie{\mathcal{L}}
\def \ee{\mathrm{e}}
\def \tt{\mathrm{t}}
\def \R{\mathbb{R}}
\def \Rn{\mathbb{R}^{\DN}}
\def \div{\mathit{div}}
\def \GG{\mathbb{G}}
\def \gg{\mathfrak{g}}
\def \pert{(\per)_t}
\def\lh{\mathcal{D}\ss}
\def\lll{\mathcal{L}\ss}
\def\tsc{\nabla^{^{_{\TT S}}}}
\def\gs{\nabla^{_{\HS}}}
\def\gc{\nabla^{_{\HH}}}
\def \Tor{{\textsc{T}}}
\def \RC{\textsc{R}}
\def \cn{\textit{w}}
\def\WW{\widetilde{W}}
\def\UU{\mathcal{U}}
\def \nn{\nu_{_{\!\HH}}}
\def \XG{\mathfrak{X}(\GG)}
\def \perh {\sigma^{2n}\cc}
\def \Lie{\mathcal{L}}
\def \ee{\mathrm{e}}
\def \Om{\Omega}
\def \Rn{\mathbb{R}^{n}}
\def \R{\mathbb{R}}
\def \cji {c_{j\,i}(x)}
\def \C { C(x):=[\cji]_{j,i},\,\, {j=1,\ldots,m \,,\, i=1,\ldots,n}}
\def \Qdim {Q:=\sum_{i=1}^{k}i\,h_i}
\def \X {X=(X_{1}, \ldots, X_{m_1})}
\def \X0 {X_{1}(0)\!=\!\partial_{x_{1}}, \ldots, X_{m_1}(0)\!=\!\partial_{x_{m_1}}}
\def \HG {\HH\GG}
\def \HS {\HH S}
\def \TG {\mathit{T}\GG}
\def \HH {\mathit{H}}
\def \VV {\mathit{V}}
\def \TT {\mathit{T}}
\def \TS {\mathit{T}S}
\def \grad{\textit{grad}}
\def \C0H{\mathbf{C}_{0}^{\infty}(U,\HG)}
\def \C00{\mathbf{C}_{0}^{\infty}(U)}
\def \C01{\mathbf{C}_{0}^{1}(U)}
\def \L1{d\,\mathcal{L}^n}
\def \H1{\mathcal{H}_{{\bf cc}}^{1}}
\def \exp{\textsl{exp\,}}
\def \log{\textsl{log\,}}
\def \esp{\textsl{exp\,}}
\def \Om{\Omega}
\def \Rn{\mathbb{R}^{n}}
\def \R{\mathbb{R}}
\def \cji {c_{j\,i}(x)}
\def \C { C(x):=[\cji]_{j,i},\,\, {j=1,\ldots,m \,,\, i=1,\ldots,n}}
\def \GG{\mathbb{G}}
\def \gg{\mathfrak{g}}
\def \X {X=(X_{1}, \ldots, X_{m_1})}
\def \X0 {X_{1}(0)\!=\!\partial_{x_{1}}, \ldots, X_{m_1}(0)\!=\!\partial_{x_{m_1}}}
\def \HG {\mathit{H}}
\def \C0H{\mathbf{C}_{0}^{\infty}(\Om,\HG)}
\def \C00{\mathbf{C}_{0}^{\infty}(\Om)}
\def \C01{\mathbf{C}_{0}^{1}(\Om)}
\def \exp{\textsl{exp\,}}
\def \esp{\textsl{exp\,}}
\def\GG{\mathbb{G}}
\begin{document}

\vskip 3cm
\begin{center}
{\bf \LARGE{Stable  H-minimal hypersurfaces}}

\vskip 1cm {\large Francescopaolo Montefalcone\footnote{F. M. has been partially supported by the Fondazione CaRiPaRo Project ``Nonlinear Partial Differential Equations: models, analysis, and control-theoretic problems".}\footnote{The author wishes to thank the anonymous referee for many helpful comments that improved the paper.} }

\end{center}

\markboth{Francescopaolo Montefalcone}{Stable  $\HH$-minimal hypersurfaces}

\section*{Abstract} \small
We  prove some stability results for smooth $\HH$-minimal hypersurfaces immersed in a  sub-Riemannian $k$-step Carnot group $\GG$. The main tools are the formulae for the 1st and 2nd variation of the $\HH$-perimeter measure $\per$. 
\\{\noindent \scriptsize \sc Key words and phrases:}
{\scriptsize{\textsf {Carnot groups; $\HH$-minimal
hypersurfaces; 1st and 2nd variation of the $\HH$-perimeter; stability; characteristic set.}}}\\{\scriptsize\sc{\noindent Mathematics Subject
Classification:}}\,{\scriptsize \,49Q15, 46E35, 22E60.}

\normalsize

\tableofcontents

\section{Introduction}

In\footnote{\bf Warning: \rm In this new version, I have corrected some imprecisions and, more importantly, I have removed the last section of the previous version. The reason for this change have been some non-trivial modifications to the preprint \cite{Montec}. No other substantial  changes have been made.}  Differential Geometry a minimal (hyper)surface of
$\Rn$ (or,  of a Riemannian manifold
$(M^n,\left\langle\cdot,\cdot\right\rangle)$) is a smooth
codimension one submanifold  having zero mean curvature. We recall
that the Riemannian mean curvature $\mathcal H\rr$ of a
hypersurface $S$ is the trace of its 2nd fundamental form $B\rr$,
which is the $\cin$-bilinear form defined as $B\rr(X,
Y):=\left\langle\nabla_XY,\nu\right\rangle$ for every $X,
Y\in\XX(\TS):=\cin(S, \TS)$, where $\nabla$ denotes the
Levi-Civita connection on the ambient space (either  $\Rn$ or $M$)
and $\nu$ is the unit normal vector along $S$. Note that $\mathcal
H\rr=-\div\ts \nu$. Minimal
hypersurfaces turn out to be critical points of the Riemannian
$(n-1)$-dimensional volume $\sigma\rr^{n-1}$ and hence,
studying stability of a minimal hypersurface $S$ means to study
conditions under which $S$ turns out to be a  minimum of the
functional $\sigma\rr^{n-1}$. For this reason
the  2nd variation formula of  $\sigma\rr^{n-1}$ becomes a fundamental tool and, in order to avoid
boundary contributions, we  can use compactly supported  
normal variations of $S$. For an introduction to these topics in
the Euclidean and/or Riemannian setting we refer the reader to the
surveys by Chern \cite{Chern}, Lawson \cite{Lawson} and Osserman
\cite{Osserman}; see also  Simons' paper \cite{Simons}. Finally,
for some results concerning stability of minimal and CMC
hypersurfaces, we would like to mention the papers \cite{DC},
\cite{DCE}, \cite{DC2}, \cite{FCS}.

That of Minimal Surfaces is one of the great chapters of the XX
century Mathematics, above all, because was a rich source of
entirely new ideas and theories such as that of Currents,
introduced by Federer and Fleming \cite{FedererFleming} (see
Federer's fundamental treatise \cite{FE}), that of Sets of Finite Perimeter
created by De Giorgi and its school starting from the pioneering
work of Caccioppoli (see the  book by Giusti
\cite{Giusti} or \cite{AFP}), and that of Varifolds, heavily
inspired by Almgreen and  developed by Allard in \cite{Allard1,
Allard2}. A highly recommended introduction for these topics is,
of course, the book by  Simon \cite{Simon}; see also the survey by
Bombieri \cite{Bombieri} and Morgan's book \cite{Morgan}.

In this paper, we  study some of these problems, in the
sub-Riemannian setting of Carnot groups. We recall that a \it
sub-Riemannian manifold \rm is a smooth $n$-dimensional manifold
$M$, endowed with a  non-integrable distribution $\HH\subset\TT M$
of $h$-planes,  called the {\it horizontal bundle}, on which a
(positive definite) metric $g\cc$ is given. The horizontal bundle
$\HH$ satisfies the \it H\"{o}rmander condition \rm and this
implies the validity of Chow theorem so that, different points can
always be joined by  \it horizontal curves \rm (i.e. curves
that are everywhere tangent to $\HH$). The idea  is simply that,
in connecting two points, we are only allowed to follow horizontal
paths joining them. The \it CC-distance $\dc,$ \rm is then defined
by minimizing the $g\cc$-length of horizontal curves connecting
two given points: this is the distance  used in sub-Riemannian
geometry. As an introduction to these topics, we refer the reader
to Gromov \cite{18}, Montgomery \cite{Montgomery}, Pansu \cite{P1,
P4}, Strichartz \cite{Stric}. In this context, Carnot groups  play
a role similar to
 Euclidean spaces in Riemannian geometry. They serve as a model for the tangent space of a sub-Riemannian manifold and, further,
represent a wide class of examples of these
 geometries. By definition, a \it $k$-step Carnot group  $\GG$ \rm  is a $n$-dimensional, connected, simply connected and nilpotent
Lie group (with respect to a group law $\bullet$ which is polynomial) having a \it $k$-step stratified Lie algebra $\gg\cong\Rn$. \rm This means that $\gg$ splits into a direct sum of vector subspaces satisfying suitable commuting relations. More precisely, we have
$\gg=\HH_1\oplus\HH_2\oplus\cdots\oplus\HH_k,$
$[\HH_1,\HH_i]=\HH_{i+1}$ for every $i=1,...,k-1$ and
$[\HH_1,\HH_i]=0$ for every $i\geq k$, where the brackets $[\cdot,\cdot]$
denote Lie brackets. We assume that $\DH_i=\dim\HH_i
\,(i=1,...,k)$ so that $n=\sum_{i=1}^{k}\DH_i$. The stratification
of $\gg$ can be seen as the algebraic counterpart of the
H\"{o}rmander condition.

 We recall that  Carnot groups are homogeneous groups, in the sense that they admit a family of positive anisotropic dilations modeled on the stratification; see \cite{Stein}.
This richness of geometric structures, makes interesting the study
of Geometric Measure Theory in Carnot groups; see, for instance,
\cite{AR},  \cite{ASCV}, \cite{AKLD}, \cite{BTW},  \cite{CCM},
\cite{GN}, \cite{FSSC3, FSSC4, FSSC5, FSSC6},  \cite{Monte,
Montea, Monteb}, \cite{Mag, Mag2, Mag3},  \cite{MoSC} and
bibliographies therein. We  also cite \cite{Bar}, \cite{vari, vari2}, \cite{vari3}, \cite{gar, gar2}, \cite{G},
 \cite{Pauls},  \cite{HRR}, \cite{R1}, \cite{RR} for  many important results concerning
$\HH$-minimal and/or constant horizontal mean curvature (hyper)surfaces of
the Heisenberg group.  Nevertheless, here we have to remark that
not much is known about the geometry of smooth $\HH$-minimal
hypersurfaces in general groups.

The aim of this paper, which is somehow a continuation of
\cite{Monteb}, is  studying the stability of smooth
$\HH$-minimal hypersurfaces immersed in  $k$-step Carnot groups.
Let us briefly describe our results. 

In Section \ref{prelcar}, we
 fix notation and main definitions concerning Carnot groups.
We   use  a left invariant frame
$\underline{X}:=\{X_1,...,X_n\}$ on $\gg$ adapted to the
stratification and  fix a Riemannian metric
$\left\langle\cdot,\cdot\right\rangle$ making $\underline{X}$
orthonormal (henceforth abbreviated as o.n.).  This frame satisfies some non-trivial commuting
relations encoded by the so-called structural constants $\SC_{i,
j}^{r}:=\left\langle[X_i,X_j],X_r\right\rangle\,\,\forall\,  i, j,
r=1,...,n$. 
Note also that  the (uniquely determined) left invariant
Levi Civita connection $\nabla$ can be expressed in terms of
 structural constants. The projection of  $\nabla$ onto the
horizontal space $\HH$ is denoted by $\gc$ and called horizontal
connection. 

In Section \ref{hyp1} we recall basic facts about
immersed hypersurfaces endowed with the $\HH$-perimeter
measure $\per$. Note that $\per=|\PH\nu|\,\sigma^{n-1}\rr$, where
$\sigma^{n-1}\rr$ is the $(n-1)$-dimensional Riemannian measure,
$\nu$ is the unit (Riemannian) normal along $S$ and $\PH$ is the
projection onto $\HH$.  Let
$\nn=\frac{\PH\nu}{|\PH\nu|}$ be the unit horizontal normal along
$S$ and let $\HS\subset\TS$ be the horizontal tangent space,
which is $(\DH-1)$-dimensional at each non-characteristic point
$p\in S\setminus C_S$, where $C_S:=\{p\in S: |\PH\nu|=0\}$ denotes the
characteristic set. It turns out that $\HH_p=\HH_p S\oplus {\mathrm
span} _\R\{\nn(p)\}$ at each $p\in S\setminus C_S$. This allows us to
define the horizontal 2nd fundamental form by setting
$B\cc(X,Y):=\left\langle\gc_XY,\nn\right\rangle$ for every $X, Y\in \cont^1(S, \HS)$. However, this
object is not symmetric, in general. Thus it can be decomposed in its symmetric and skew-symmetric parts, i.e. $B\cc=S\cc+A\cc$. 

 In Section \ref{HDF} we  
discuss some divergence-type formulae, which  are very
important tools. In particular, these results  enable us to  define the
horizontal tangential operators $\lh$ and $\lll$, which are
analogous, in this SR setting, to tangential divergence $\div\ts$
and Laplacian $\Delta\ts$. An important fact is the validity of the
formula \begin{equation*}-\int_{S}\varphi\lll
\varphi\,\per=\int_{S}|\qq\varphi|^2\,\per\end{equation*}for every
compactly supported function $\varphi\in\cont\ss^2(S\setminus
C_S)\cap W^{1,2}\ss(S;\per)$; see Corollary \ref{GDc} and also Remark \ref{remdiff}. This formula
holds  (a fortiori) whenever $\varphi\in\cont^2(S)$. 

In Section \ref{sec2.1} we  discuss the basic calculations needed to prove
the 1st variation formula for the $\HH$-perimeter $\per$. 
 
Section
\ref{tecsec1} contains some other tools: adapted
frames, connection 1-forms and  lemmata concerning the
horizontal 2nd fundamental form $B\cc$. This material is then used
in Section \ref{varformsec} to discuss and prove  the variational
formulae for $\per$. The presentation here is slightly different
from  \cite{Monteb}. In fact, we have tried to simplify the original
proofs. More importantly, we have corrected a mistake that has caused the loss of some divergence-type terms in the variational formulae proved there; see Remark \ref{mistake}. 
 Furthermore, we have extended the formulae to the characteristic case.

 We say that
a   hypersurface $S$  of class $\cont^2$  is
\it $\HH$-minimal \rm  if its horizontal mean curvature $\MS$ is zero at
each non-characteristic $p\in S\setminus C_S$.  Moreover, it turns out that the \textquotedblleft infinitesimal\textquotedblright 1st variation of $\per$  is given by
$$\Lie_W\per=\left(-\MS\langle W,\nu\rangle +\div\ts \left( W\ot|\PH\nu|-\langle W,\nu\rangle\nn\ot \right)\right)\,\sigma\rr^{n-1},$$where $\Lie_W\per$ is the Lie derivative of $\per$ with respect to the initial velocity $W$  of the  variation and the symbols $W\op,\,W\ot$ denote the normal and tangential components of $W$, respectively. If $\MS$ is $L^1(S; \sigma\rr^{n-1})$, the function $\Lie_W\per$ is integrable on $S$ and the integral of $\Lie_W\per$ on $S$ gives the 1st variation of $\per$. Note  that the 
third term in the previous formula depends on the normal component of $W$.  We stress that this term was omitted in \cite{Monteb}. Using a generalized divergence-type formula, the divergence term  can be integrated on the boundary and, if one use compactly supported variations, it follows that  $\HH$-minimal hypersurfaces are  \textquotedblleft critical points\textquotedblright  of the $\HH$-perimeter functional.

The formula for 2nd
variation of $\per$, which is one of the main results of this paper, will be obtained as a result of a long calculation; see Theorem  \ref{2vg}. This formula will be proved under some further  assumptions concerning integrability of some geometric quantities. For a precise statement, we refer the reader to Section \ref{varformsec}. In the  Heisenberg group $\mathbb H^1$, the 1st variation formula  for characteristic surfaces of class $\cont^2$ was first obtained by Ritor\'{e} and Rosales in \cite{RR}. We also stress that Hurtado, Ritor\'{e} and Rosales \cite{HRR} have proved  a formula for the 2nd variation which is analogous to that stated in Theorem \ref{2vg}.
We also quote  \cite{HP2}, for similar results in a  general sub-Riemannian setting.

Using compactly supported variations together with suitable integrability conditions on the function $\frac{1}{|\PH\nu|}$, the 2nd variation formula takes the following simple form
$$II_S(W,\per)=\int_{S}\left(|\qq\cn|^2 -\cn^2{\mathcal
B}\ts\right)\,\per,$$ where $W$ is the variation vector and
$\cn=\frac{\left\langle W,\nu\right\rangle}{|\PH\nu|}$. Here, we have set
$$\mathcal{B}\ts:=\underbrace{\|S\cc\|\ngr^2 +\|A\cc\|\ngr^2}_{=\|B\cc\|^2\ngr} +\sum_{\alpha\in I\vv} \left\langle
 \left[2 \qq(\varpi_\alpha)-C(\varpi)\left(X_\alpha-\varpi_\alpha\nn\right)\right],C^\alpha
\nn\right\rangle;$$see Corollary \ref{corvar2stat},  Definition \ref{defini} and Definition \ref{movadafr} in Section \ref{hyp1}.
Note that the above  expression involves many geometric quantities such as the
horizontal 2nd fundamental form $B\cc$ (or, its symmetric and skew-symmetric parts $S\cc$ and $A\cc$),  the vertical vector field $\varpi$, defined as
$\varpi:=\frac{\P\vv\nu}{|\PH\nu|}=\sum_{\alpha=\DH+1}^n\varpi_\alpha X_\alpha,$ where $\varpi_\alpha:=\frac{\nu_\alpha}{|\P\cc\nu|}$, and  
the matrices of the
structural constants  $C^\alpha$ and $C(\varpi)=\sum_{\alpha\in I\vv}\varpi_\alpha C^\alpha$.

In Section \ref{PCARITGC} we   state  some further  identities  for constant horizontal mean curvature hypersurfaces. In particular, we  find a family of explicit solutions to the equation $$\lll\varphi+\varphi\,\mathcal B\ts=0.$$
This is a key-point of this paper and, using this fact, the main stability inequality  follows by adapting a standard argument in the Riemannian setting; see, e.g. \cite{FCS}.  In Section \ref{SR1} we  prove the following:
\begin{teo}Let $S\subset\GG$ be a
 $\HH$-minimal hypersurface of class $\cont^3$. If there exists $\alpha\in I\vv=\left\lbrace \DH+1,...,n\right\rbrace$ such that either $\varpi_\alpha>0$ or $\varpi_\alpha<0$ on $S$, then each non-characteristic domain $\Om\subset S$ is stable.
\end{teo}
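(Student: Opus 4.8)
The plan is to exploit the simplified second variation formula
$$II_S(W,\per)=\int_{S}\left(|\qq\cn|^2 -\cn^2{\mathcal B}\ts\right)\,\per,$$
valid for compactly supported variations on a non-characteristic domain $\Om\subset S$, and to show that the stability inequality $II_S(W,\per)\geq 0$ holds for all admissible test functions $\cn$. Following the classical Fischer-Colbrie--Schoen argument (see \cite{FCS}), the key is the existence of a positive (or everywhere negative) solution $\varphi$ of the Jacobi-type equation $\lll\varphi+\varphi\,\mathcal B\ts=0$ on $\Om$. Section \ref{PCARITGC} is supposed to provide exactly such an explicit solution; the hypothesis that some $\varpi_\alpha$ has a fixed sign on $S$ is precisely what guarantees that the candidate solution (built out of the component $\varpi_\alpha$ and related geometric data) does not change sign on the non-characteristic domain. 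So the first step is to invoke that result to produce $\varphi\in\cont\ss^2(\Om)$ with $\varphi>0$ (WLOG) and $\lll\varphi=-\varphi\,\mathcal B\ts$.

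The second step is the standard logarithmic-cutoff / substitution trick. Given a compactly supported test function $\cn\in\cont\ss^2(\Om\setminus C_S)\cap W^{1,2}\ss(\Om;\per)$, write $\cn=\varphi u$ where $u=\cn/\varphi$ is also compactly supported. Then, using the product rule for the horizontal gradient $\qq$ and the divergence-type formula from Corollary \ref{GDc} (namely $-\int_S\psi\,\lll\psi\,\per=\int_S|\qq\psi|^2\,\per$ for compactly supported $\psi$, applied with $\psi=\cn$ and, after integration by parts, with the mixed term), one expands
$$\int_\Om|\qq\cn|^2\,\per=\int_\Om\left(u^2|\qq\varphi|^2+\varphi^2|\qq u|^2+2u\varphi\langle\qq u,\qq\varphi\rangle\right)\per.$$
Integrating the cross term by parts against $\varphi$ (again via Corollary \ref{GDc}, legitimate because everything is compactly supported away from the characteristic set) converts $\int 2u\varphi\langle\qq u,\qq\varphi\rangle\,\per$ into $-\int u^2\varphi\,\lll\varphi\,\per-\int\varphi^2 u\,\lll u\,\per$ plus a term that recombines; more efficiently, one uses the identity $\div\ss(\varphi^2 u\,\qq u)=\varphi^2|\qq u|^2+\varphi^2 u\,\lll u+2u\varphi\langle\qq u,\qq\varphi\rangle$ and integrates to zero. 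The upshot is the Picone-type identity
$$\int_\Om|\qq\cn|^2\,\per=\int_\Om\varphi^2|\qq u|^2\,\per-\int_\Om u^2\varphi\,\lll\varphi\,\per=\int_\Om\varphi^2|\qq u|^2\,\per+\int_\Om\cn^2\,\mathcal B\ts\,\per,$$
where the last equality uses the Jacobi equation $\lll\varphi=-\varphi\,\mathcal B\ts$. Substituting back into the second variation gives
$$II_S(W,\per)=\int_\Om\varphi^2|\qq u|^2\,\per\geq 0,$$
which is exactly stability.

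The main obstacle I anticipate is the analytic justification of the integration by parts, i.e. making sure Corollary \ref{GDc} genuinely applies in the presence of the characteristic set $C_S$ and under only the stated integrability of $1/|\PH\nu|$. One must check that the cutoff functions used to localize away from $C_S$ can be chosen so that the error terms vanish in the limit — this is where the integrability hypotheses on $\frac{1}{|\PH\nu|}$ (entering the definition of $\per$-weighted Sobolev space $W^{1,2}\ss(\Om;\per)$) are essential — and that $u=\cn/\varphi$ lies in the right space, which requires $\varphi$ to be bounded away from zero on compact subsets of $\Om$ (guaranteed by positivity and $\cont^2$-regularity). A secondary point is verifying that the explicit solution from Section \ref{PCARITGC} has the claimed sign on all of $\Om$ under the single-component sign hypothesis on $\varpi_\alpha$; granting the results of that section, this is immediate, but it is the place where the hypothesis of the theorem is actually consumed. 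Everything else is the routine Fischer-Colbrie--Schoen manipulation transplanted verbatim to the sub-Riemannian tangential calculus established earlier in the paper.
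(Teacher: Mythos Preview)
Your proposal is correct and follows essentially the same approach as the paper: invoke Lemma \ref{vnct} to obtain the positive Jacobi field $\varpi_\alpha$ solving $\lll\varphi+\varphi\,\mathcal B\ts=0$, then apply the Fischer-Colbrie--Schoen argument (the paper's Lemma \ref{zxc} implements this via the logarithmic substitution $\phi=\log\psi$ followed by Cauchy--Schwarz rather than your Picone-type substitution $\cn=\varphi u$, but the two variants are equivalent and equally standard). One remark: your anticipated obstacle concerning the characteristic set is moot here, since by hypothesis $\Om\subset S^\ast=S\setminus C_S$ is a non-characteristic domain, so $|\PH\nu|$ is bounded away from zero on compact subsets and the integrations by parts from Corollary \ref{GDc} require no additional cutoff or limiting argument.
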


An immediate application of the previous result is contained in the next:
\begin{corollario}Let  $S\subset \GG$  be a complete $\HH$-minimal hypersurface of class $\cont^3$. If $S$ is a graph with respect to some given vertical direction, then each non-characteristic domain $\Om\subset S$ is stable.
\end{corollario}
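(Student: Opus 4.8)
The plan is to deduce the Corollary from the Theorem by checking that a graph over a vertical direction automatically satisfies the sign hypothesis on one of the components $\varpi_\alpha$ of the vertical vector field $\varpi = \PZ\nu/|\PH\nu|$. First I would set up the graph: say $S$ is the graph of a $\cont^3$ function over a hyperplane transverse to the fixed vertical direction $X_{\alpha_0}$ with $\alpha_0\in I\vv$; equivalently $S = \{x : u(x) = x_{\alpha_0}\}$ for a suitable defining function, or $S$ is parametrized so that the coordinate $x_{\alpha_0}$ is a function of the remaining coordinates. The key elementary observation is that for such a graph the Riemannian unit normal $\nu$ never lies in the hyperplane $\{X_{\alpha_0} = 0\}^{\bot}$-complement; more precisely $\langle \nu, X_{\alpha_0}\rangle$ is nowhere zero on $S$, because a graph over a hyperplane has a normal with a non-vanishing component in the graphing direction (this is the same fact as in the Euclidean case: if $S=\{x_{\alpha_0}=f(\hat x)\}$ then $\nu$ is proportional to $(-\nabla f, 1)$ in the appropriate frame, whose last component is $1\neq 0$). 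Since $S$ is connected (being a graph, hence homeomorphic to a domain in a hyperplane — or one restricts to a connected component), $\langle\nu,X_{\alpha_0}\rangle = \nu_{\alpha_0}$ has a constant sign on $S$.

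Next I would translate this into a statement about $\varpi_{\alpha_0}$. By definition $\varpi_{\alpha_0} = \nu_{\alpha_0}/|\PH\nu|$ at every non-characteristic point. On the non-characteristic set $S\setminus C_S$ the denominator $|\PH\nu|$ is strictly positive, so $\varpi_{\alpha_0}$ has the same sign as $\nu_{\alpha_0}$, which we just showed is a fixed nonzero sign. Hence either $\varpi_{\alpha_0} > 0$ on $S\setminus C_S$ or $\varpi_{\alpha_0} < 0$ on $S\setminus C_S$. One subtlety: the Theorem's hypothesis asks for the sign condition "on $S$", but since stability is a property of \emph{non-characteristic} domains $\Om\subset S$ and $\varpi_\alpha$ is only defined off $C_S$, the natural reading is that the sign condition is required on $S\setminus C_S$; I would either note that a graph has empty characteristic set for the vertical direction used (because $\nu_{\alpha_0}\neq 0$ forces $|\PH\nu|\neq 0$ only if $X_{\alpha_0}$ were horizontal, which it is not — so this needs care) or, more safely, simply apply the Theorem on the non-characteristic domain $\Om$, where everything is well-defined and the sign condition holds.

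With the sign of $\varpi_{\alpha_0}$ established and $S$ being $\HH$-minimal of class $\cont^3$ by hypothesis, the Theorem applies verbatim with $\alpha = \alpha_0$ and yields that each non-characteristic domain $\Om\subset S$ is stable. That completes the deduction; the completeness assumption on $S$ in the Corollary is not actually needed for this argument and is presumably stated only to make the conclusion interesting (so that non-characteristic domains exhausting $S$ exist), so I would remark on that rather than use it.

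The main obstacle I anticipate is the geometric lemma that a graph over a vertical direction has $\nu_{\alpha_0}$ of constant sign: one has to be precise about what "graph with respect to a vertical direction" means in the stratified setting and verify that the left-invariant frame component $\langle\nu, X_{\alpha_0}\rangle$ inherits the non-vanishing of the Euclidean graphing-direction component. Because $X_{\alpha_0}$ is a vertical left-invariant field and the graphing is with respect to the corresponding coordinate, the standard graph-normal computation goes through, but writing it cleanly — relating the coordinate vector field $\partial_{x_{\alpha_0}}$ to $X_{\alpha_0}$ (they may differ by horizontal and lower-stratum terms, yet the top-stratum coordinate structure keeps the relevant component from vanishing) — is the only point requiring genuine attention. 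Everything else is a direct substitution into the Theorem.
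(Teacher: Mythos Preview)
Your proposal is correct and follows exactly the route the paper intends: the corollary is stated there as an ``immediate application'' of Theorem \ref{MRS} with no proof given, and your argument---that a graph over the vertical direction $X_{\alpha_0}$ has $\nu_{\alpha_0}\neq 0$ everywhere, hence $\varpi_{\alpha_0}$ has constant sign on $S\setminus C_S$---is precisely the one-line deduction the paper has in mind. Your caution about relating $\partial_{x_{\alpha_0}}$ to $X_{\alpha_0}$ is well placed but resolves easily in the paper's setup (exponential coordinates of the first kind, with $\tau_\alpha=X_\alpha$ for $\alpha\in I\vv$), and your observation that completeness is not actually used is accurate.
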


Finally, , in order to illustrate our results, an analysis of some (more or less simple) examples is given in
Section \ref{exf}; see, more precisely,  Corollary \ref{c0}, Corollary \ref{c1},   and Corollary \ref{c3}.

 \subsection{Carnot groups}\label{prelcar} A $k$-{\it{step Carnot group}}
$(\GG,\bullet)$ is a connected, simply connected, nilpotent and
stratified Lie group (with respect to a group law $\bullet$) so that its Lie algebra $\gg \cong\Rn$
is a direct sum of slices ${\mathfrak{g}}={\HH}_1\oplus...\oplus
{\HH}_k$  such that $
 [{\HH}_1,{\HH}_{i-1}]={\HH}_{i}\quad(i=2,...,k),\,\,
 {\HH}_{k+1}=\{0\}$.
Let $0$ be the identity of $\GG$ and set
$\DH_i:=\dim{{\HH}_i}$ for $i=1,...,k$ and $\DH_1:=\DH$. Moreover
set $\HH:=\HH_1$ and
${\VV}:={\HH}_2\oplus...\oplus {\HH}_k.$ Note that
$\HH$ and $\VV$ are smooth subbundles
 of $\TG$ called
{\it horizontal}  and {\it vertical}, respectively. The horizontal
space $\HH$ is generated by a frame
$\underline{X\cc}:=\{X_1,...,X_{\DH}\}$ of left-invariant vector
fields, which can be completed to a global  graded,
left-invariant frame $\underline{X}:=\{X_1,...,X_n\}$ for $\gg$.
We  stress that the standard basis $\{\ee_i:i=1,...,n\}$ of $\Rn\cong\TT_0\GG$
can be relabeled to be {\it graded} or {\it adapted to the
stratification}. Note that any left-invariant vector field of
$\underline{X}$ satisfies
${X_i}(x)={L_x}_\ast\ee_i\,(i=1,...,n)$, where ${L_x}_\ast$
denotes the differential of the left-translation at $x\in\GG$. We
fix a Euclidean metric on $\Rn\cong \TT_0\GG$ which makes $\{\ee_i:
i=1,...,n\}$ an o.n.  basis; this metric extends to each tangent space  by left-translations and makes
$\underline{X}$ an
  o.n.  left-invariant frame for $\gg$.  We  denote by
  $g =\left\langle\cdot,\cdot\right\rangle$  this metric and  assume that $(\GG, g)$
  is a Riemannian manifold.

We  use the so-called  {\it exponential coordinates of 1st
kind} so that $\GG$ is identified with its Lie algebra $\gg$,
via the (Lie group) exponential map $\exp:\gg\longrightarrow\GG$.

A {\it sub-Riemannian metric} $g\cc$ is a symmetric positive
bilinear form on the horizontal space $\HH$. The {\it
{CC}-distance} $\dc(x,y)$ between $x, y\in \GG$ is given by
$$\dc(x,y):=\inf \int\sqrt{g\cc(\dot{\gamma},\dot{\gamma})}\,dt,$$
where the infimum is taken over all piecewise-smooth horizontal
paths $\gamma$ joining $x$ to $y$. From now on, we shall choose
$\g:=g_{|\HH}$.

 We recall that Carnot groups are {\it homogeneous groups}, i.e. they admit a
one-parameter group of automorphisms
$\delta_t:\GG\longrightarrow\GG$ for any $t\geq 0$.  By definition, one has
$\delta_t x
:=\exp\left(\sum_{j,i_j}t^j\,x_{i_j}\ee_{i_j}\right),$
for every $x=\exp\left(\sum_{j,i_j}x_{i_j}\ee_{i_j}\right)\in\GG.$ The
{\it homogeneous dimension} of $\GG$ is the integer
 $\Qdim$
coinciding with the {\it Hausdorff dimension} of $(\GG,\dc)$ as a
metric space; see \cite{18}, \cite{Montgomery}.

The {\it structural constants} of $\gg$ associated with
$\underline{X}$ are defined by
$\SC^r_{ij}:=\left\langle [X_i,X_j],
 X_r\right\rangle,\,i,j, r=1,...,n$.
They are skew-symmetric and satisfy Jacobi's identity. The stratification hypothesis on $\gg$ can be restated as follows:
\begin{equation}\label{stcon} X_i\in {\HH}_{l},\, X_j \in
{\HH}_{m}\Longrightarrow [X_i,X_j]\in {\HH}_{l+m}\end{equation}
and so if $i\in
I{\!_{^{_{{\HH}_s}}}}$ and $j\in I{\!_{^{_{{\HH}_r}}}}$, then
\begin{equation}\label{notabilmente}\SC^m_{ij}\neq 0 \Longrightarrow m \in I{\!_{^{_{{\HH}_{s+r}}}}}.\end{equation}
We  set
\begin{itemize}\item$C^\alpha\cc:=[\SC^\alpha_{ij}]_{i,j=1,...,\DH}\in\mathcal{M}_{\DH\times
\DH}(\R) \qquad \forall\,\,\alpha=\DH+1, ..., \DH+\DH_{2}$;\item
$C^\alpha:=[\SC^\alpha_{ij}]_{i, j=1,...,n}\in
\mathcal{M}_{n\times n}(\R) \qquad \forall\,\,\alpha=\DH+1, ...,
n$.\end{itemize}

Now we introduce\footnote{\bf Notation. \rm Let $M$ be a smooth manifold. We shall denote by $\Om^k(M)$ the space of differential $k$-forms on $M$.} the left-invariant co-frame
$\underline{\omega}:=\{\omega_i:i=1,...,n\}$ dual to
$\underline{X}$, i.e. $\omega_i=X_i^\ast\in \Om^1(\GG)$ for every $i=1,...,n$.
In particular, note that the {\it left-invariant 1-forms} $\omega_i$ are
uniquely determined by
$$\omega_i(X_j)=\left\langle X_i,X_j\right\rangle=\delta_i^j\qquad \forall\,\,i,
j=1,...,n,$$ where $\delta_i^j$ denotes the Kronecker delta.

Let $\nabla$ denote the (unique) left-invariant Levi-Civita
connection on $\GG$ associated with the left-invariant metric
$g=\left\langle\cdot, \cdot\right\rangle$. It turns
out that
\begin{eqnarray*}\nabla_{X_i} X_j =
\frac{1}{2}\sum_{r=1}^n( \SC_{ij}^r  - \SC_{jr}^i + \SC_{ri}^j)
X_r\qquad\forall\,\,i, j=1,...,n.\end{eqnarray*}If $X, Y\in \XH:=\cin(\GG, H) $, we  set $\gc_X
Y:=\PH(\nabla_X Y)$. The operation $\gc$ is a partial connection called {\it $\HH$-connection}. We stress that $\gc$ is {\it flat, compatible with the metric $\g$} and {\it
torsion-free} (i.e.
 $\gc_X Y - \gc_Y X-\PH[X,Y]=0\,\, \forall\,\,X, Y\in \XH$); see \cite{Monteb} and references therein.

\begin{no}If $X\in\XX^1(\TG):=\cont^1(\GG,\TG)$, we denote by $\mathcal J\rr X$ the Jacobian matrix of $X$ computed with respect to the left invariant frame $\underline{X}=\{X_1,...,X_n\}$. Moreover, if $X\in\XX^1(\HH)=\cont^1(\GG,\HH)$, we  denote by $\mathcal J\cc X$ the horizontal Jacobian matrix of $X$ computed with respect to the horizontal  left invariant frame $\underline{X\cc}=\{X_1,...,X_\DH\}$; see \cite{Monteb}. 
\end{no}

\begin{oss}[Horizontal curvature tensor $\RC\cc$]\label{rh0}The flatness of $\gc$ implies that horizontal curvature tensor $\RC\cc$ is identically zero, where we recall that $\RC\cc(X,Y)Z:=\gc_Y\gc_XZ-\gc_X\gc_YZ -\gc_{[Y,X]\cc}Z$ for $X, Y, Z\in\XH.$
\end{oss}

Horizontal gradient and horizontal divergence operators are
denoted by $\dg$ and $\div\cc$.

A continuous distance
$\varrho:\GG\times\GG\longrightarrow\R_+\cup\{0\}$ is called {\it
homogeneous} if one has
\begin{eqnarray*}\varrho(x,y)=\varrho(z\bullet x,z\bullet
y)\quad\forall\,x,\,y,\,z\in\GG;\qquad
\varrho(\delta_tx,\delta_ty)=t\varrho(x,y)\quad\forall t>
0.\end{eqnarray*}
 We  recall a fundamental example.
 \begin{es}[Heisenberg groups $\mathbb H^n$]\label{hng}The Lie algebra
$\mathfrak{h}_n\cong\R^{2n+1}$ of the $n$-th Heisenberg group can  be defined by using a left-invariant frame
$\{X_1,Y_1,...,X_i,Y_i,...,X_n,Y_n,T\}$, where
$X_i(p):=\frac{\partial}{\partial x_i} -
\frac{y_i}{2}\frac{\partial}{\partial t}$, $Y_i(p):=\frac{\partial}{\partial y_i} +
\frac{x_i}{2}\frac{\partial}{\partial t}$  for every $i=1,...,n$, and
$T(p):=\frac{\partial}{\partial t}$. Here $p=\exp(x_1,y_1,x_2,y_2,...,x_n,y_n, t)$ denotes the generic point in $\mathbb{H}^n$.
One has $[X_i,Y_i]=T$ {for every} $i=1,...,n$, and all other commutators
vanish. Hence, by definition, $T$ is the center of
$\mathfrak{h}_n$ and
 $\mathfrak{h}_n$ turns out to be  nilpotent and stratified of
step 2, i.e. $\mathfrak{h}_n=\HH\oplus \HH_2$. The
  structural constants of $\mathfrak{h}_n$ are
described by the skew-symmetric $(2n\times 2n)$-matrix
\begin{center}
 $C\cc^{2n+1}:=\left|
\begin{array}{ccccc}
  0 & 1 &    \cdot &  0 &  0 \\
  -1 & 0 &  \cdot &  0 &  0 \\
  \cdot & \cdot & \cdot & \cdot & \cdot \\
  0 & 0 &   \cdot & 0 & 1 \\
  0 & 0 &  \cdot & -1 & 0
\end{array}%
\right| $\end{center} associated
with the skew-symmetric bilinear map
$\Gamma\cc:\HH\times\HH\longrightarrow \R$ given by
$\Gamma\cc(X, Y)=\left\langle[X, Y], T\right\rangle$ for every $X, Y\in\HH$.

\end{es}

\subsection{Hypersurfaces and measures}\label{hyp1} The Riemannian
left-invariant volume form on $\GG$ is defined as
$\sigma^n\rr:=\bigwedge_{i=1}^n\omega_i\in
\Om^n(\GG).$ The measure $\sigma^n\rr$ is the Haar measure of $\GG$ and equals
the push-forward of the usual $n$-dimensional Lebesgue measure
$\mathcal{L}^n$ on $\Rn\cong\TT_0\GG$.

Let $S\subset\GG$ be a hypersurface of class $\mathbf{C}^1$. We say that
$x\in S$ is a {\it characteristic point} whenever
$\dim\,\HH_x = \dim (\HH_x \cap \TT_x S)$.  The
{\it characteristic set} of $S$ is defined as $$ C_S:=\{x\in S :
\dim\,\HH_x = \dim (\HH_x \cap \TT_x S)\}.$$ Note that $x\in S$ is
non-characteristic if, and only if, $\HH$ is transversal to $S$ at
$x$, i.e. $\HH_x\pitchfork\TT_x S$. We here observe that the $(Q-1)$-dimensional CC Hausdorff measure of the
characteristic set $C_S$  vanishes, i.e.
$\mathcal{H}_{CC}^{Q-1}(C_S)=0$; see \cite{Mag2}. In fact, under further regularity assumptions,  it is possible to show much more than that. For instance, if $S$ is of class $\cont^2$,
then the $(n-1)$-dimensional  Riemmanian Hausdorff measure of  $C_S$ is zero; see \cite{bal}.

 Let
$\nu$ denote the unit normal vector along $S$. The
$(n-1)$-dimensional Riemannian measure  is defined as  $\sigma^{n-1}\rr :=(\nu\LL\sigma^{n}\rr)|_{S},$  where  $\LL$ denotes
the ``contraction'' operator on differential
 forms; see Lee's book \cite{Lee},  pp. 334-346.  We recall that $\LL:
\Om^k(\GG)\rightarrow\Om^{k-1}(\GG)$
is defined, for $X\in\XX(\TG)$ and
$\alpha\in\Om^k(\GG)$, by setting
$$(X \LL \alpha)
(Y_1,...,Y_{k-1}):=\alpha(X,Y_1,...,Y_{k-1}).$$

At each non-characteristic point of $S$ the {\it unit $\HH$-normal} along
$S$ is the normalized projection of $\nu$ onto $\HH$, that is $\nn:
=\frac{\PH\nu}{|\PH\nu|}.$  The {\it $\HH$-perimeter form} is the $(n-1)$-differential form $\per$ on $S\setminus C_S$ defined
by $$\per:=(\nn \LL
\sigma^n\rr)|_{S\setminus C_S}.$$ If $C_S\neq{\emptyset}$ we extend
$\per$ to the whole of $S$ by setting $\per\res C_{S}= 0$. Note that $\per= |\PH \nu |\,\sigma^{n-1}\rr$. This follows from the well-known formula $(X\LL\sigma^n\rr)|_S=\langle X,\nu\rangle\,\sigma^{n-1}\rr$ for any $X\in \XX(\TG)$.
 In particular, it turns out that $C_S=\{x\in S:\,
|\PH\nu(x)|=0\}$. Let $\mathcal{S}_{CC}^{Q-1}$ be the $(Q-1)$-dimensional
spherical Hausdorff measure associated with the CC-distance $\dc$.
Then  $\per(S\cap B)=k(\nn)\,\mathcal{S}_{CC}^{Q-1}({S}\cap B)$ for all $B\in \mathcal{B}or(\GG)$, where the density $k(\nn)$, called {\it metric factor},
depends on $\nn$; see \cite{Mag}. The {\it horizontal tangent bundle}
$\HS\subset \TS$ and the {\it horizontal normal bundle} $\nn S$
split the horizontal bundle $\HH$ into an orthogonal direct sum,
i.e. $\HH= \nn \oplus \mathit{H}S$.
We also recall that the stratification of $\gg$ induces a
stratification of
$\TS:=\oplus_{i=1}^k\HH_iS,$ where $\HS:=\HH_1S$; see \cite{18}.

\begin{oss}\label{SERES} We have $\dim \HH_p S=\dim \HH_p-1=\DH-1$ at each point $p\in S\setminus C_S$. Furthermore, note that the  definition of $\HS$ makes sense even if $p\in C_S$, but in such a case $\dim \HH_p S=\dim \HH_p =2n$.
\end{oss}

For the sake of simplicity, in the rest of this section we shall assume, unless otherwise mentioned, that $S\subset\GG$ is a  non-characteristic hypersurface of class $\cont^2$. So let
$\tsc$ be the induced connection on $S$ from $\nabla$. The
tangential connection $\tsc$ induces a partial connection on $\HS$
defined by
$$\gs_XY:=\P\ss\left( \tsc_XY\right)  \quad\,\forall\,\,X,Y\in\XX^1(\HS):=\cont^1(S, \HS).$$
It turns out that  $\gs_XY=\gc_X Y-\left\langle\gc_X Y,\nn\right\rangle\nn$. In the sequel,  $\HS$-gradient and  $\HS$-divergence will be denoted, respectively,  by $\qq$
and $\div\ss$. By definition, the horizontal 2{nd} fundamental form of ${S}$ is the
bilinear map given by
\begin{eqnarray*}{B\cc}(X,Y):=\left\langle\gc_X Y, \nn\right\rangle\end{eqnarray*} for any $X, Y\in\XX^1(\TS)$.
The horizontal mean curvature $\MS$
is the trace of ${{B}\cc}$,
i.e. $\MS:={\rm
Tr}B\cc=-\div\cc\nn$. The torsion $\Tor\ss$ of the
$\HS$-connection $\gs$ is given by
$\Tor\ss(X,Y):=\gs_XY-\gs_YX-\PH[X,Y]$ for any $X,Y\in\XX^1(\HS)$. There is a non-zero torsion because,
in general, $B\cc$ is {\it not symmetric}. Hence $B\cc$ can be
regarded as a sum of two matrices, i.e. $B\cc= S\cc + A\cc,$ where
$S\cc$ is symmetric and $A\cc$ is skew-symmetric. 

\begin{Defi}\label{defini}The  \rm principal horizontal
curvatures \it $\kappa_j$ of $S$, $j\in I\ss$, are  the eigenvalues
of $S\cc$, i.e.  eigenvalues of the symmetric part of
the horizontal 2nd fundamental form $B\cc$. Note that  $\MS=\sum_{j\in I\ss}\kappa_j$. We also define some important geometric objects:
\begin{itemize}\item $\varpi_\alpha:=\frac{\nu_\alpha}{|\PH\nu|}\quad \forall\,\,\alpha=\DH+1,...,n$;
\item
$\varpi\cd:=\frac{\P\cd\nu}{|\PH\nu|}=\sum_{\alpha\in I\cd}\varpi_\alpha
X_\alpha$;\item
$\varpi:=\frac{\P\vv\nu}{|\PH\nu|}=\sum_{\alpha\in I\vv} \varpi_\alpha
X_\alpha$;\item $C\cc(\varpi\cd):=\sum_{\alpha\in I\cd}
\varpi_\alpha\,C^\alpha\cc$;
\item
$ C(\varpi):=\sum_{\alpha\in I\vv} \varpi_\alpha\,C^\alpha$.
\end{itemize}We further denote by $C\ss(\varpi\cd)$ the restriction to the subspace $\HS$ of the linear operator $C\cc(\varpi\cd)$.\end{Defi}
 These objects play an important role in the horizontal geometry of immersed hypersurfaces. For instance, we have to remark that
$A\cc=\frac{1}{2}\,C\ss(\varpi\cd)$; see
 \cite{Monteb}.  Moreover, for any $X,\,Y\in\XX^1(\HS)$ we have
 $
 \Tor\ss(X,Y)=\langle[X,Y],\varpi\rangle\nn
=-\langle C\ss(\varpi\cd) X, Y\rangle$.

\begin{es}[Heisenberg group]\label{hng2} We have $\varpi:=\varpi_T=\frac{\left\langle\nu, T\right\rangle}{|\PH\nu|}$ and $C\cc(\varpi\cd)=\varpi C\cc^{2n+1}$; see Example \ref{hng}.  An elementary computation shows that the skew-symmetric part $A\cc$ of the horizontal 2nd fundamental form $B\cc$ is given by $A\cc=\frac{\varpi}{2}C\ss^{2n+1}$, where  $C\ss^{2n+1}=C\cc^{2n+1}|_{\HS}$. Since $\|C\ss^{2n+1}\|^2\ngr=2(n-1)$, it follows that  $\|B\cc\|^2\ngr=\|S\cc\|^2\ngr+\frac{n-1}{2}\varpi^2$. 
\end{es}

\begin{Defi}\label{movadafr}Let $U\subseteq\GG$ be an open set and let $\UU:=S\cap U$. We  call {\rm adapted frame to $\UU$ on $U$} any o.n. frame
 $\underline{\tau}:=\{\tau_1,...,\tau_n\}$ on $U$  such that
$\tau_1(p):=\nn(p),\,\,\HH_p\UU=\mathrm{span}\{ \tau_2(p),..., \tau_{\DH}(p)\}$ for any $p\in\UU$,
  $\tau_\alpha:=
X_\alpha.$ Furthermore, we  set
$\TB_\alpha:=\tau_\alpha -
\varpi_\alpha\tau_1$ for any $\alpha\in I\vv$. We stress that
$\HS^{\bot}=\mathrm{span}_{\R}\{\TB_\alpha: \alpha\in I\vv\},$
where $\HS^{\bot}$ denotes the orthogonal complement of $\HS$ in
$\TS$, i.e.  $\TS=\HS\oplus\HS^{\bot}.$\end{Defi}

Note that every adapted o.n. frame to a hypersurface is a graded
frame. Clearly, we have that
 $$\underline{\tau}=\{\underbrace{\tau_1}_{=\nn},
 \underbrace{\tau_2,...,\tau_{\DH}}_{\mbox{\tiny{o.n. basis of}}\,\HS},\underbrace{\tau_{\DH+1},...,\tau_n}_{\mbox{\tiny o.n. basis of}\,\VV}\}.$$

\begin{no} Let $n_i:=\sum_{j=1}^{i}\DH_j$. Hereafter, we shall set
 $I\cc=\{1,2,...,\DH\}$,  $I\ci=\{n_{i-1}+1,...,n_i\}$, $I\vv=\{\DH+1,...,n\}$ and
 $I\ss:=\{2,3,...,\DH\}$.\end{no}

 Let
$\underline{\phi}:=\{\phi_1,...,\phi_n\}$ be the dual co-frame of $\underline{\tau}$, i.e.
$\phi_i(\tau_j)=\delta_i^j$ for any $i,
j=1,...,n$, where $\delta_i^j$ denotes the Kroneker delta. The co-frame $\underline{\phi}$ satisfies the
Cartan's structural equations:\begin{equation}\label{csteq} {\rm(I)}\quad d {\phi}_i =
\sum_{j=1}^{n} {\phi}_{ij}\wedge {\phi}_j,\qquad {\rm(II)}\quad d
{\phi}_{jk} = \sum_{l=1}^{n} {\phi}_{jl}\wedge {\phi}_{lk} -
\Phi_{jk}\end{equation} for any $i, j,
k=1,...,n$, where
${\phi}_{ij}(X):=\left\langle\nabla_{X} \tau_j, \tau_i\right\rangle$ denote the
{\it connection 1-forms} of $\underline{\phi}$ and
$\Phi_{jk}$ denote the {\it curvature 2-forms}, defined by
$\Phi_{jk}(X,Y):={\phi}_k(\RC(X,Y)\tau_j)$ for any $X, Y\in\XG$, where
$\RC$  is the {\it Riemannian curvature
tensor}, that is
\begin{eqnarray*}\RC(X,Y)Z:=\nabla_Y\nabla_XZ-\nabla_X\nabla_YZ -\nabla_{[Y,X]}Z\end{eqnarray*}for any $X, Y, Z\in\XG$.
The following holds
\begin{equation}\label{fr}C_{ij}^{k}:=\left\langle[\tau_i,\tau_j],\tau_k\right\rangle={\phi}_{jk}(\tau_i)- {\phi}_{ik}(\tau_j)\qquad\forall\,\,i, j,
k=1,...,n.\end{equation}This identity can be proved by using the fact
that $\nabla$ is torsion-free.

\begin{Defi}\label{hyppl} A \rm vertical hyperplane $\mathcal I$ \it is the zero-set of a linear
homogeneous polynomial on $\GG$  of homogeneous degree $1$.
A \rm non-vertical hyperplane $\mathcal I$ \it is the zero-set of a linear
polynomial on $\GG$  of homogeneous degree greater than or equal to $2$. 
\end{Defi}
 Hyperplanes  are $(n-1)$-dimensional vector subspaces of $\gg$.
The importance of vertical hyperplanes comes from the intrinsic rectifiability theory developed by
Franchi, Serapioni and Serra Cassano in 2-step Carnot groups;
see \cite{FSSC3, FSSC4, FSSC5, FSSC6}. They turn out to be ideals
of the Lie algebra $\gg$ and may be thought of as generalized
tangent spaces to sets of finite $\HH$-perimeter (in the
variational sense); see  \cite{AKLD}.  
  Non-vertical hyperplanes will be studied in Section \ref{exf}.

\section{Divergence formulae}\label{HDF}
Let  $S\subset\GG$ be a  
hypersurface of class $\cont^2$. For the sake of simplicity, we first assume that $S$ is non-characteristic.
 Let
$\cont^i\ss(S),\,(i=1, 2)$ be the space of functions whose
$\HS$-derivatives up to the $i$-th order are continuous on $S$.  Analogously, for any open subset $\UU\subseteq S$, we set $\cont^i\ss(\UU)$, to denote the space of functions whose
$\HS$-derivatives up to the $i$-th order are continuous on $\UU$. The previous definitions extend to the case  $C_S\neq \emptyset$ by requiring that all
$\HS$-derivatives up to the $i$-th order are continuous on $C_S$. \begin{oss}
The notions concerning the $\HS$-connection $\gs$,  the horizontal 2nd fundamental form $B\cc$ and the torsion $\Tor\ss$, can also be  formulated  by replacing $\XX^1(\HS)=\cont^1(S, \HS)$ with the larger  space  $\XX\ss^1(\HS):=\cont\ss^1(S, \HS)$.
\end{oss}\begin{Defi}[$\HS$-differential operators]\label{Deflh}Let $\lh:\XX\ss^1(\HS)\longrightarrow\cont(S)$  be
the 1st order
differential operator given by
\begin{eqnarray*}\lh X:=\div\ss X +  \left\langle C\cc(\varpi\cd)\nn,
X\right\rangle \qquad
\forall\,\,X\in\XX\ss^1(\HS).\end{eqnarray*}Moreover, let $\lh:\cont\ss^2(S)\longrightarrow\cont(S)$ be the 2nd order differential operator defined as
\begin{eqnarray*}\lll\varphi:=\Delta\ss\varphi +  \left\langle C\cc(\varpi\cd)\nn,
\qq\varphi\right\rangle
\qquad\forall\,\,\varphi\in\cont^2\ss(S).\end{eqnarray*}
\end{Defi}Note that $\lh(\varphi X)=\varphi\lh X +
\left\langle\qq \varphi, X\right\rangle$   for every $X\in\XX^1(\HS)$ and every $\varphi\in\cont\ss^1(S)$. Moreover  $\lll \varphi=\lh(\qq
\varphi)$ for every $\varphi\in\cont\ss^2(S)$.

It is not difficult to see that the operators $\Delta\ss$ and $\lll$ naturally
extend to  horizontal vector fields. These extensions will be denoted by $\overrightarrow{\Delta\ss}$ and $\overrightarrow{\lll}$. We remark that$$\overrightarrow{\lll}X=\overrightarrow{\Delta\ss}X+\left({\mathcal J}\ss X\right)C\cc(\varpi\cd)\nn$$ for every $X\in\cont^2\ss(S\setminus C_S, \HS)$, where ${\mathcal J}\ss X$ denotes the $\HS$-Jacobian matrix of the horizontal tangent vector field $X$.

We now define a homogeneous measure $\nis$, which plays the role of the intrinsic Hausdorff measure on $(n-2)$-dimensional submanifolds of $\GG$.

\begin{oss}[The measure $\nis$]\label{oas}Let $\eta\in\XX(\TS)$ be a unit normal vector orienting $\partial S$. Furthermore, let $\eta\ss:=\frac{\P\ss\eta}{|\P\ss\eta|}$ be
 the {\rm unit
$\HS$-normal} of $\partial S$. By definition, we set
 ${\nis}:=
 \left(\eta\ss\LL\per\right)\big|_{\partial S}$. Exactly as for the $\HH$-perimeter $\per$, the measure $\nis$, which is $(Q-2)$-homogeneous with respect to Carnot dilations, can be
represented in terms of the Riemannian measure ${\sigma^{n-2}\rr}$. In fact, we have $\nis =
|\PH\nu|\,|\P\ss\eta|\,\sigma^{n-2}\rr\res{\partial S}$.
\end{oss}

The above definitions are somehow motivated by
Theorem 3.17 in \cite{Monteb}. 
\begin{teo}\label{GD}Let $S\subset\GG$ be a  
compact non-characteristic hypersurface of class $\cont^2$ with (piecewise)
$\cont^1$ boundary $\partial S$ and $X\in\XX^1(\TS)$. Then
$$\int_{S}\lh X\,\per= 
\int_{\partial S}\left\langle
X,\eta\ss\right\rangle\,\nis.$$\end{teo}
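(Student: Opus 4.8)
The plan is to reduce the statement to the ordinary Riemannian divergence theorem on $S$ by massaging the $\HS$-divergence term and the $\HH$-perimeter form into Riemannian objects. First I would recall that $\per = |\PH\nu|\,\sigma^{n-1}\rr$ and that $\nis = |\PH\nu|\,|\P\ss\eta|\,\sigma^{n-2}\rr\res\partial S$, so that the claimed identity is equivalent to
\begin{equation*}
\int_S \left( \div\ss X + \langle C\cc(\varpi\cd)\nn, X\rangle\right)|\PH\nu|\,\sigma^{n-1}\rr = \int_{\partial S}\langle X,\eta\ss\rangle\,|\PH\nu|\,|\P\ss\eta|\,\sigma^{n-2}\rr.
\end{equation*}
Since $X\in\XX^1(\TS)$, we may decompose $X = X\ss + \sum_{\alpha\in I\vv}\langle X,\TB_\alpha\rangle\,\frac{\TB_\alpha}{|\TB_\alpha|^2}$ using the splitting $\TS = \HS\oplus\HS^\bot$ from Definition \ref{movadafr}; because $\lh$ and the boundary pairing are built from $\HS$-objects, I expect only the $\HS$-component to contribute, but I would keep track of the $\HS^\bot$-part carefully since $X$ is merely tangent to $S$, not tangent to $\HS$.

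The key computational step is to relate $\div\ss X\cdot|\PH\nu|$ to a Riemannian tangential divergence $\div\ts(\,\cdot\,)$ of a suitable vector field on $S$, up to lower-order terms that should exactly cancel the $\langle C\cc(\varpi\cd)\nn,X\rangle|\PH\nu|$ correction. Concretely, I would compute $\div\ts\bigl(|\PH\nu|\,X\bigr) = |\PH\nu|\div\ts X + X(|\PH\nu|)$ and then express $\div\ss X$ in terms of $\div\ts X$ using $\gs_YY = \gc_YY - \langle\gc_YY,\nn\rangle\nn$ and the relation $\tsc_YY = \gs_YY + (\text{normal parts along }\nn\text{ and }\eta$-type directions$)$. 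The appearance of the structural-constant matrix $C\cc(\varpi\cd)$ is exactly the torsion term $\Tor\ss$, and since $A\cc = \tfrac12 C\ss(\varpi\cd)$ and $\Tor\ss(X,Y) = -\langle C\ss(\varpi\cd)X,Y\rangle\nn$, the skew-symmetric piece of $B\cc$ is what produces the extra term in the definition of $\lh$; I would verify that when one writes the intrinsic divergence on $(S,\per)$ in terms of the Riemannian divergence, precisely the combination $\div\ss X + \langle C\cc(\varpi\cd)\nn, X\rangle$ appears as the density multiplying $\per$. This is the main obstacle: matching the bookkeeping of normal components (the $\nn$-direction inside $\HH$ versus the genuinely vertical $\TB_\alpha$-directions) so that the Riemannian divergence theorem
\begin{equation*}
\int_S \div\ts(V)\,\sigma^{n-1}\rr = \int_{\partial S}\langle V,\eta\rangle\,\sigma^{n-2}\rr
\end{equation*}
applies cleanly to $V = |\PH\nu|\,X$ (or to its tangential part), and so that the boundary integrand $\langle |\PH\nu| X,\eta\rangle$ reduces to $|\PH\nu|\,|\P\ss\eta|\,\langle X,\eta\ss\rangle$, which follows because $\langle X,\eta\rangle = \langle X,\P\ss\eta\rangle = |\P\ss\eta|\langle X,\eta\ss\rangle$ when $X$ has no $\HS^\bot$-component relevant at the boundary.

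Finally, I would assemble the pieces: apply the Riemannian divergence theorem on the compact manifold-with-boundary $S$ to the $\cont^1$ vector field obtained above, use the class $\cont^2$ hypothesis to guarantee $|\PH\nu|\in\cont^1(S)$ and hence that the computations are legitimate, and invoke the non-characteristic assumption so that $|\PH\nu|$ stays bounded away from zero and $\nn$, $\HS$ are well-defined and smooth everywhere on $S$. A secondary point worth checking is the reduction of $X$ to its $\HS$-part: one should show $\lh$ applied to the $\HS^\bot$-component of $X$ integrates to the corresponding boundary term by the same argument, or else argue directly that only $X\ss$ enters both sides. I expect the whole argument to be a careful but essentially routine translation of the Euclidean/Riemannian divergence theorem, with the one genuinely substantive ingredient being the identity expressing the intrinsic $\per$-divergence through the Riemannian one, which is where the torsion term $C\cc(\varpi\cd)$ enters and must be reconciled with Definition \ref{Deflh}.
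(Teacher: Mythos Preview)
The paper does not actually prove Theorem \ref{GD}; it is quoted as Theorem 3.17 from the author's earlier work \cite{Monteb}. So there is no in-paper argument to compare against directly. Your strategy --- rewrite everything via $\per=|\PH\nu|\,\sigma\rr^{n-1}$, set $V=|\PH\nu|\,X$, and invoke the Riemannian divergence theorem on $(S,\sigma\rr^{n-1})$ --- is correct and is equivalent to applying Stokes' theorem to the $(n-2)$-form $\alpha_X:=(X\LL\per)|_S$, which is the differential-form viewpoint the paper uses elsewhere (Lemma \ref{fondam}, Definition \ref{adm}).

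One simplification you should make: the hypothesis $X\in\XX^1(\TS)$ in the statement is almost certainly a slip for $X\in\XX^1(\HS)$. The operator $\lh$ in Definition \ref{Deflh} is defined only on $\HS$-valued fields, and every subsequent use of the theorem in the paper --- formula \eqref{fipp}, Corollary \ref{GDc}, formula \eqref{opophparts} --- takes $X$ horizontal-tangent. Once you assume $X\in\HS$, your boundary identity $\langle X,\eta\rangle=\langle X,\P\ss\eta\rangle=|\P\ss\eta|\langle X,\eta\ss\rangle$ holds exactly, and the whole $\HS\oplus\HS^\perp$ decomposition you worried about is unnecessary. The one substantive computation is indeed the interior identity $\div\ts(|\PH\nu|\,X)=\lh X\cdot|\PH\nu|$ for $X\in\HS$, which you correctly singled out; it is a frame calculation in the adapted frame of Definition \ref{movadafr}, using the structural-constant relations recorded in Lemma \ref{primavolta}.
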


As a consequence, the following integral formula holds
\begin{equation}\label{fipp}
 \int_{S}\lh X\,\per=\int_{S}\left( \div\ss X + \left\langle C\cc(\varpi\cd)\nn,
X\right\rangle\right)\,\per = 0
\end{equation}for every $X\in
\cont^1_0(S, \HS)$.

 Stokes' formula is concerned with integrating a $k$-form over a $k$-dimensional manifold with boundary.
A common way to state this fundamental result is the following.

\begin{Prop}\label{ST} Let $M$ be an oriented $k$-dimensional manifold of class $\cont^2$ with boundary
$\partial M$. Then $\int_M d\alpha=\int_{\partial M}\alpha$ for every compactly supported $(k-1)$-form $\alpha$ of class $\cont^1$.
\end{Prop}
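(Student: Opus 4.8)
The plan is to reduce to coordinate charts by a partition of unity, which is the classical strategy. Since $\mathrm{supp}(\alpha)$ is compact, I would first cover it by finitely many oriented coordinate charts $(U_j,\psi_j)$, $j=1,\dots,N$, where each $\psi_j$ is a diffeomorphism of class $\cont^2$ from $U_j$ onto an open subset of either $\R^k$ or the closed half-space $\R^k_+:=\{x\in\R^k:\,x_k\geq 0\}$, the second case occurring precisely when $U_j\cap\partial M\neq\emptyset$. Taking a subordinate partition of unity $\{\rho_j\}$ with $\sum_j\rho_j\equiv 1$ on a neighbourhood of $\mathrm{supp}(\alpha)$, I would write $\alpha=\sum_j\rho_j\alpha$; since $d$ and integration are linear and the sum is finite, it suffices to prove the identity for each compactly supported $\cont^1$ form $\alpha_j:=\rho_j\alpha$, which is supported inside the single chart $U_j$.

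The second step is the coordinate computation. Pulling $\alpha_j$ back by $\psi_j^{-1}$, I am left with a compactly supported $\cont^1$ form on an open subset of $\R^k$ or $\R^k_+$ that can be written as $\beta=\sum_{i=1}^{k}(-1)^{i-1}b_i\,dx_1\wedge\cdots\wedge\widehat{dx_i}\wedge\cdots\wedge dx_k$, with $b_i$ of class $\cont^1$ and compactly supported; a direct computation gives $d\beta=\bigl(\sum_{i=1}^{k}\partial b_i/\partial x_i\bigr)\,dx_1\wedge\cdots\wedge dx_k$. If $U_j$ is an interior chart, Fubini's theorem and the fundamental theorem of calculus give $\int\partial b_i/\partial x_i\,dx=0$ for every $i$, because each $b_i$ has compact support in an open subset of $\R^k$; hence $\int_M d\alpha_j=0=\int_{\partial M}\alpha_j$, the latter because $\alpha_j$ vanishes near $\partial M$. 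If $U_j$ is a boundary chart, the same argument annihilates the terms $i=1,\dots,k-1$, while integration of $\partial b_k/\partial x_k$ over $\R^k_+$ leaves the boundary contribution $-\int_{\R^{k-1}}b_k(x',0)\,dx'$; restricting $\beta$ to $\partial M\cong\{x_k=0\}$ kills all terms except the one with $dx_1\wedge\cdots\wedge dx_{k-1}$, and comparing with the induced (outward-normal-first) orientation on $\partial M$ shows that this equals $\int_{\partial M}\alpha_j$.

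Summing the finitely many identities $\int_M d\alpha_j=\int_{\partial M}\alpha_j$ over $j$ yields $\int_M d\alpha=\int_{\partial M}\alpha$. The only delicate point is the sign bookkeeping in the boundary chart — verifying that the orientation induced on $\partial M$ in the statement matches the sign produced by integrating $\partial b_k/\partial x_k$ over the half-space — together with the observation that the regularity hypotheses are exactly what is needed: $M$ of class $\cont^2$ makes the transition maps $\cont^1$, and $\alpha$ of class $\cont^1$ makes the coefficients $b_i$ continuously differentiable, so that the fundamental theorem of calculus applies. Alternatively, one may simply invoke the result from a standard reference such as Lee's book \cite{Lee}.
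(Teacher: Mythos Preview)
Your argument is the standard partition-of-unity proof of Stokes' theorem and is correct; the regularity bookkeeping (a $\cont^2$ atlas so that pull-backs of $\cont^1$ forms remain $\cont^1$, and $\cont^2$ partition functions so that $\rho_j\alpha$ is still $\cont^1$) is exactly the technical reason the paper alludes to when it remarks, immediately after the statement, that ``one requires $M$ to be of class $\cont^2$ for a technical reason concerning `pull-back' of differential forms.''

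Note, however, that the paper does \emph{not} supply a proof of this proposition at all: it is recalled as a classical fact (``A common way to state this fundamental result is the following''), and the surrounding discussion is devoted instead to the less standard extensions $(\star)$ and $(\spadesuit)$ under weaker regularity. Your closing remark --- that one may simply invoke a standard reference such as \cite{Lee} --- is precisely the spirit in which the paper treats the result.
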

One requires $M$ to be of class $\cont^2$ for a technical reason concerning \textquotedblleft pull-back\textquotedblright\, of differential forms.
We remark that it is possible to extend Proposition \ref{ST}
to the following cases:\begin{itemize}\item[$(\star)$] \it $\overline{M}$ is of class $\cont^1$ and $\alpha$ is a $(k-1)$-form such that $\alpha$ and $d\alpha$ are continuous;\item[$(\spadesuit)$] \it $\overline{M}$ is of class $\cont^1$ and $\alpha$ is a $(k-1)$-form such that $\alpha\in L^\infty(M)$, $d\alpha\in L^1(M)$ -or  $d\alpha\in L^\infty(M)$- and $\imath_M^\ast\alpha\in L^\infty(\partial M)$, where $\imath_M:\partial M\longrightarrow\overline{M}$ is the natural inclusion. 
\end{itemize}
 
 \rm Many different versions of Stokes' theorem are available in literature; see, for instance, \cite{FE}. For an introduction, we refer the reader to the book by Taylor \cite{Taylor}; see Appendix G. 

\begin{oss}General versions of  Stokes' theorem  can be deduced from the generalized Gauss-Green formula 
proved by De Giorgi and Federer; see  \cite{DeGio} or \cite{FE}, Theorem 4.5.6, p. 478. However, it is worth observing that they hold for Lipschitz differential forms. A general result of this type can be found in Maz'ja'  \cite{Maz}, see Section 6.2. We observe that  $(\star)$  holds for any compact oriented  $k$-dimensional manifold $M$ with boundary and for differential forms that are Lipschitz at each point of $M\setminus T$, where $T$ is a thin subset of $M$; see \cite{Pfeffer}, Remark 5.3.2, p. 197.
On the other hand $(\spadesuit)$ is perhaps less known that $(\star)$. 
The validity of $(\spadesuit)$ is observed in \cite{Taylor}; see formula (G.38), Appendix G. This result can be deduced by applying a standard procedure\footnote{See, for instance, Federer \cite{FE}, paragraph 3.2.46, p. 280; see also \cite{Pfeffer}, Remark 5.3.2, p. 197.} from a divergence-type theorem proved by  Anzellotti; see, more precisely, Theorem 1.9 in \cite{Anze}.
More recent and  more general results can also  be found in the  paper by Chen, Torres and Ziemer  \cite{variZ}.
\end{oss}

We have here to remark that either  condition $(\star)$  or $(\spadesuit)$ can  be used  to extend the horizontal integration by parts  formulae to vector fields (and functions)  possibly singular at the characteristic set $C_S$.  

\begin{Defi}\label{adm}
Let $X\in\cont^1(S\setminus C_S, \HS)$ and set
 $\alpha_X:=(X\LL \per)|_S$. We say that $X$ is \rm admissible (for the horizontal divergence formula)  \it if the differential forms $\alpha_X$ and $d\alpha_X$ satisfy either  condition $(\star)$  or $(\spadesuit)$ on  $S$. We say that $\phi\in\cont^2\ss(S\setminus C_S)$  is \rm admissible \it if $\qq \phi$ is admissible for the horizontal divergence formula. More generally, let $X\in\cont^1(S\setminus C_S, \TS)$ and  set
 $\alpha_X:=(X\LL \per)|_S$. Then,  we say that $X$ is \rm admissible (for the Riemannian divergence formula)  \it whenever  $\alpha_X$ and $d\alpha_X$ satisfy either  condition $(\star)$  or $(\spadesuit)$ on  $S$.
\end{Defi}
Using Definition \ref{adm} and Theorem \ref{GD} yields the following:
\begin{corollario}\label{GDc}Let $S\subset\GG$ be a 
compact hypersurface of class $\cont^2$ with  
$\cont^1$ boundary $\partial S$. We have:\begin{itemize}
 \item[{\rm (i)}]$\int_{S}\lh X\,\per=
\int_{\partial S}\left\langle
X,\eta\ss\right\rangle\,\nis$ for every  admissible   $X\in\cont^1(S\setminus C_S, \HS)$;
 \item[{\rm (ii)}] $\int_{S}\lll \phi\,\per=
\int_{\partial S}\left\langle
\qq\phi,\eta\ss\right\rangle\,\nis$ for every  admissible   $\phi \in\cont^2\ss(S\setminus C_S)$; \item[{\rm (iii)}] if $\partial S=\emptyset$, then  \begin{equation}\label{eq}-\int_{S}\varphi\lll
\varphi\,\per=\int_{S}|\qq\varphi|^2\,\per\end{equation} for every function $\varphi\in\cont^2\ss(S\setminus C_S)$ such that $\varphi^2$ is admissible. \end{itemize}
\end{corollario}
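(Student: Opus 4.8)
The plan is to treat (i) as the one substantive assertion — it is precisely Theorem \ref{GD} with the non-characteristic hypothesis dropped and the horizontal tangent field allowed to be singular along $C_S$ — and then to obtain (ii) and (iii) from it by purely formal manipulations with the operators $\lh$ and $\lll$ of Definition \ref{Deflh}.

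For (i) I would start from the pointwise differential identity underlying Theorem \ref{GD} (equivalently, Theorem 3.17 in \cite{Monteb}): for a horizontal tangent field $X$ the $(n-2)$-form $\alpha_X:=(X\LL\per)|_S$ satisfies $d\alpha_X=(\lh X)\,\per$ on $S\setminus C_S$, while along $\partial S$ one has $\imath_S^\ast\alpha_X=\langle X,\eta\ss\rangle\,\nis$ (this is the content of Remark \ref{oas}). Given an admissible $X\in\cont^1(S\setminus C_S,\HS)$, the forms $\alpha_X$ and $d\alpha_X$ — extended by zero across $C_S$, exactly as $\per$ was — by definition fulfil one of the hypotheses $(\star)$, $(\spadesuit)$ on the compact $\cont^1$ manifold-with-boundary $S$; applying the corresponding extension of Stokes' theorem gives $\int_S d\alpha_X=\int_{\partial S}\imath_S^\ast\alpha_X$, i.e. $\int_S(\lh X)\,\per=\int_{\partial S}\langle X,\eta\ss\rangle\,\nis$. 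Here one uses that, $S$ being of class $\cont^2$, the characteristic set $C_S$ is $\sigma^{n-1}\rr$-negligible (see \cite{bal}), hence harmless for the extended Stokes theorems, so that no spurious contribution arises from it.

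Statement (ii) follows at once by applying (i) to $X:=\qq\phi$: by Definition \ref{adm}, ``$\phi$ admissible'' means exactly that $\qq\phi$ is admissible, and since $\lll\phi=\lh(\qq\phi)$ one gets $\int_S\lll\phi\,\per=\int_{\partial S}\langle\qq\phi,\eta\ss\rangle\,\nis$. For (iii), with $\partial S=\emptyset$ and $\varphi^2$ admissible, I would apply (i) to $X:=\varphi\,\qq\varphi=\tfrac12\,\qq(\varphi^2)$, which is admissible because $\qq(\varphi^2)$ is, obtaining $\int_S\lh(\varphi\,\qq\varphi)\,\per=0$. Combining this with the Leibniz identity $\lh(\varphi X)=\varphi\lh X+\langle\qq\varphi,X\rangle$ (taken at $X=\qq\varphi$) and $\lh(\qq\varphi)=\lll\varphi$, which together give the pointwise identity $\lh(\varphi\,\qq\varphi)=\varphi\,\lll\varphi+|\qq\varphi|^2$ on $S\setminus C_S$, yields $-\int_S\varphi\,\lll\varphi\,\per=\int_S|\qq\varphi|^2\,\per$.

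The only genuinely delicate point is the one concealed in (i): justifying that the identity $d\alpha_X=(\lh X)\,\per$, valid only on $S\setminus C_S$, may be integrated over all of $S$ through the extended Stokes theorems $(\star)$/$(\spadesuit)$. This is precisely what the notion of admissibility (Definition \ref{adm}) is designed to legitimize — it forces $\alpha_X$ and $d\alpha_X$ into the function classes for which those versions of Stokes apply — while the $\cont^2$ regularity of $S$ ensures that $C_S$ is small enough not to contribute a boundary term. Once (i) is secured in this form, the passages to (ii) and (iii) are routine algebra with $\lh$ and $\lll$.
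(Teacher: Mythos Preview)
Your proposal is correct and follows essentially the same approach as the paper, which simply states that the corollary is obtained ``Using Definition \ref{adm} and Theorem \ref{GD}'' without further elaboration. You have accurately reconstructed the intended argument: the pointwise identity $d\alpha_X=(\lh X)\,\per$ underlying Theorem \ref{GD}, combined with the admissibility hypothesis to invoke the extended Stokes theorems $(\star)$/$(\spadesuit)$, gives (i); then (ii) and (iii) follow formally from the relations $\lll\phi=\lh(\qq\phi)$ and $\lh(\varphi\,\qq\varphi)=\varphi\,\lll\varphi+|\qq\varphi|^2$, together with the observation $\varphi\,\qq\varphi=\tfrac12\qq(\varphi^2)$.
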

Note that formula \ref{eq} holds  even if $\partial S\neq\emptyset$, but in this case we have to use compactly supported functions on $S$.

\begin{oss}\label{remdiff}Let $\varphi\in\cont^2\ss(S\setminus C_S)$. If   $\varphi^2$ is admissible, then $$\varphi\in W\ss^{1,2}(S,\per)=\left\{\varphi\in L^2(S,\per): |\qq\varphi|\in L^2(S,\per)\right\}.$$ 
\end{oss}

\begin{es}[Heisenberg group; see Example \ref{hng2}]\label{hng3} One has
$\lh(X):=\div\ss X +\varpi\left\langle C\cc^{2n+1}\nn, X\right\rangle$ for every $X\in\XX^1(\HS)$ and $\lll\varphi:=\lh(\qq \varphi)=\Delta\ss\varphi +\varpi\langle C\cc^{2n+1}\nn,\qq\varphi\rangle$
 for every $\varphi\in\cont^2\ss(S)$.
\end{es}
 
The following result will be used throughout the proof of Corollary \ref{corvar2stat}.
\begin{Prop}\label{divtoeRiemW12} Let $M$ be a compact oriented  Lipschitz $k$-dimensional Riemannian manifold. Then $\int_M \div\tm X\,\sigma\rr^{k}=0$ for every $X\in W^{1, 2}_{comp}(M, \TT M)$, where $\sigma\rr^{k}$ is the Riemannian volume and $W^{1, 2}_{comp}(M)$ denotes the Sobolev space of all square-integrable compactly supported vector fields on $M$  with  square-integrable first (covariant) derivative.
\end{Prop}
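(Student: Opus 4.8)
The plan is to avoid any pointwise version of the divergence theorem (which would be delicate on a merely Lipschitz manifold, since the metric coefficients are only $L^\infty$) and instead to use the \emph{weak} characterization of the divergence as the formal adjoint of $-\,\mathrm{grad}$, together with a cut-off function. Concretely: first show that, for $X\in W^{1,2}_{comp}(M,\TT M)$, the identity
\begin{equation*}
\int_M \bigl(\div\tm X\bigr)\,\psi\;\sigma\rr^k \;=\; -\int_M \langle X,\mathrm{grad}\,\psi\rangle\;\sigma\rr^k
\end{equation*}
holds for every Lipschitz function $\psi$ on $M$; then pick a Lipschitz (equivalently, smooth in the interior charts) function $\chi$ with compact support in the interior $M\setminus\partial M$ and $\chi\equiv 1$ on a neighbourhood of $\mathrm{supp}\,X$, and plug $\psi=\chi$ into the identity. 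Since $\div\tm$ is a first-order differential operator, $\mathrm{supp}(\div\tm X)\subseteq\mathrm{supp}\,X$, so $\int_M(\div\tm X)\,\sigma\rr^k=\int_M(\div\tm X)\,\chi\,\sigma\rr^k$; and $\mathrm{grad}\,\chi\equiv 0$ on $\mathrm{supp}\,X\supseteq\mathrm{supp}(\div\tm X)$, whence the right-hand side vanishes and the statement follows.

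The displayed integration-by-parts identity is itself obtained by density. For $X$ a compactly supported Lipschitz vector field on $M$ (in local Lipschitz charts the metric is bounded above and below, so $\div\tm X=\frac{1}{\sqrt{\det g}}\,\partial_i(\sqrt{\det g}\,X^i)$ is at least an $L^1$ function, and $X\LL\sigma\rr^k$ is a compactly supported $(k-1)$-form with $d(X\LL\sigma\rr^k)=(\div\tm X)\,\sigma\rr^k$), the formula reduces after a partition of unity to the classical statement $\int_{\R^k}\partial_i\bigl((\sqrt{\det g}\,X^i)\psi\bigr)\,\mathcal L^k=0$ for compactly supported $W^{1,1}$ integrands, which is proved by mollification. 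To pass from Lipschitz $X$ to general $X\in W^{1,2}_{comp}(M,\TT M)$, one uses that such $X$ are approximated in $W^{1,2}$ by compactly supported Lipschitz fields (mollify chart by chart), that $\div\tm\colon W^{1,2}_{comp}(M,\TT M)\to L^2(M)$ is continuous, and that $L^2(M)\hookrightarrow L^1(M)$ since $M$ is compact (finite volume). Both sides of the identity are then continuous in $X$ with respect to the $W^{1,2}$ norm for fixed Lipschitz $\psi$, so the identity persists in the limit. Taking $\psi\equiv 1$ directly when $\partial M=\emptyset$, or $\psi=\chi$ as above when $\partial M\neq\emptyset$, concludes the argument.

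The main obstacle is purely regularity-theoretic: on a Lipschitz manifold one cannot invoke Stokes' theorem (Proposition~\ref{ST}, or even its extensions $(\star)$, $(\spadesuit)$) for the form $X\LL\sigma\rr^k$ in a naive way, because composing $X$ with Lipschitz transition maps and the $L^\infty$ metric factor $\sqrt{\det g}$ destroys the $W^{1,\cdot}$ control needed to give $d(X\LL\sigma\rr^k)$ a classical meaning. Phrasing everything through the adjoint identity for $-\,\mathrm{grad}$ circumvents this, since the only differentiation performed on a low-regularity object is the distributional derivative defining $\div\tm X$, and the only integration by parts is against the \emph{smooth} (in charts) cut-off $\chi$. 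The role of the subscript ``$comp$'' is exactly to make the cut-off $\chi$ available: it guarantees that no boundary contribution of $\partial M$ can appear, independently of the regularity of $\partial M$.
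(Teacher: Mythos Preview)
Your argument is sound: the adjoint identity $\int_M(\div\tm X)\psi\,\sigma\rr^k=-\int_M\langle X,\mathrm{grad}\,\psi\rangle\,\sigma\rr^k$ for Lipschitz $X$ and Lipschitz $\psi$ reduces in charts to the vanishing of the integral of a compactly supported $W^{1,1}$ derivative, the passage to $X\in W^{1,2}_{comp}$ by density is justified because $\div\tm$ is a first-order operator with $L^\infty$ coefficients (the only non-obvious coefficient being $\partial_i\log\sqrt{\det g}$, which is bounded since $g$ is Lipschitz and nondegenerate), and the cut-off $\chi$ kills any boundary contribution. The support inclusion $\mathrm{supp}(\div\tm X)\subseteq\mathrm{supp}\,X$ and the choice $\chi\equiv 1$ near $\mathrm{supp}\,X$ are exactly what is needed to conclude.

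As for the comparison with the paper: there is nothing to compare, because the paper does not prove this proposition at all. It is stated and then immediately attributed to external sources (Shubin, Proposition~3.1, and Schick, Section~4.3). Your write-up therefore supplies what the paper omits. The approach you take --- density of Lipschitz fields plus the weak adjoint characterization of the divergence --- is in fact the standard one in those references, so you have essentially reconstructed the cited argument. The virtue of framing it through the adjoint identity rather than through Stokes' theorem for the form $X\LL\sigma\rr^k$ is, as you note, that one never needs to make sense of the exterior derivative of a low-regularity form on a low-regularity manifold; all the analysis happens at the level of $L^2$ pairings.
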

This propoposition can be found in \cite{Shubin}; see Proposition 3.1, p. 7; see also \cite{schick}, Section 4.3, p. 33.

\subsection{Preliminary remarks concerning the 1st variation formula}\label{sec2.1}

\begin{no}Let $S\subset\GG$ be a  hypersurface of class  $\cont^i$ , $i\geq 2$. Let $X\in\TG$ and let $\nu$ be the  outward-pointing unit normal vector along $S$. We shall denote by $X\op$ and $X\ot$ the standard decomposition of $X$ into its normal and tangential components, i.e. $X\op=\langle X,\nu\rangle\nu,\,X\ot=X-X\op$. 
\end{no}
We now make a simple (but fundamental) calculation. \begin{lemma}\label{fondam}If $X\in\XX^1(\TG)$, then   $(X\LL\per)|_S=\left( \left(X\ot|\PH\nu|-\langle X,\nu\rangle\nn\ot\right)\LL\sigma\rr^{n-1}\right) \res S$. Moreover, at each non-characteristic point of $S$, we have $$d(X\LL\per)|_S=\div\ts \left( X\ot|\PH\nu|-\langle X,\nu\rangle\nn\ot \right) \,\sigma\rr^{n-1}\res S.$$
\end{lemma}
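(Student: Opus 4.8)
The plan is to establish the pointwise identity for the contracted form $(X\LL\per)|_S$ first, and then differentiate. For the first identity, I would decompose $X = X\ot + \langle X,\nu\rangle\nu$ and use the bilinearity of contraction, together with the definition $\per = |\PH\nu|\,\sigma\rr^{n-1}$. Since $\sigma\rr^{n-1} = (\nu\LL\sigma\rr^n)|_S$ and contraction satisfies $X\LL(Y\LL\sigma\rr^n) = -Y\LL(X\LL\sigma\rr^n)$, one gets that $X\ot\LL\per = |\PH\nu|\,(X\ot\LL\sigma\rr^{n-1})$ restricts correctly on $S$ (the tangential part of $X$ pairs against $\sigma\rr^{n-1}$ as an $(n-2)$-form on $S$), while the normal part contributes only through $\nn\ot$ after using $\per=(\nn\LL\sigma\rr^n)|_S$ and the splitting $\HH_p = \nn(p)\oplus\HH_pS$. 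The key computation is to rewrite $X\LL\per = X\LL(\nn\LL\sigma\rr^n)|_S$, split $X$ and also split $\nn = \nn\ot + \langle\nn,\nu\rangle\nu = \nn\ot + |\PH\nu|\,\nu$ (since $\langle\nn,\nu\rangle = \langle\PH\nu,\nu\rangle/|\PH\nu| = |\PH\nu|$), and collect terms; the $\nu\LL(\nu\LL\cdot)$ terms vanish and one is left with exactly $\left(X\ot|\PH\nu| - \langle X,\nu\rangle\nn\ot\right)\LL\sigma\rr^{n-1}$ restricted to $S$.

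For the second identity I would apply the exterior derivative to the $(n-1)$-form $\per$ on $S$ (which, on the $(n-1)$-dimensional manifold $S$, has $d\per$ supported appropriately) via Cartan's formula $\Lie_X = d\circ\LL_X + \LL_X\circ d$: since on $S\setminus C_S$ the form $\per$ is a top-degree form (degree $n-1$), $d\per = 0$ there, so $\Lie_X\per = d(X\LL\per)$ on $S$. On the other hand, for the volume form $\mu$ on an oriented manifold one has $\Lie_X\mu = (\div X)\mu$; applying this to the oriented $(n-1)$-manifold $S$ with volume form $\sigma\rr^{n-1}$ and the tangential vector field $Z := X\ot|\PH\nu| - \langle X,\nu\rangle\nn\ot \in \TS$, and using the first part of the lemma (which says $X\LL\per|_S = Z\LL\sigma\rr^{n-1}$ as forms on $S$), I obtain $d(X\LL\per)|_S = d(Z\LL\sigma\rr^{n-1}) = \Lie_Z\sigma\rr^{n-1} = (\div\ts Z)\,\sigma\rr^{n-1}$, which is the claimed formula. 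One must only check that $Z$ is genuinely a section of $\TS$ (so that $\div\ts Z$ makes sense): $X\ot$ is tangent by construction, and $\nn\ot = \nn - \langle\nn,\nu\rangle\nu$ is the tangential part of $\nn$, hence also in $\TS$.

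The main obstacle is bookkeeping with the contraction operator and the restrictions to $S$: contraction does not commute with restriction, and one has to be careful that for a vector $Y$ tangent to $S$, the form $(Y\LL\sigma\rr^n)|_S$ agrees with $Y\LL(\sigma\rr^n|_S)$ only after the orientation conventions relating $\sigma\rr^n$, $\nu$, and $\sigma\rr^{n-1}$ are pinned down — precisely the identity $(Y\LL\sigma\rr^n)|_S = \langle Y,\nu\rangle\,\sigma\rr^{n-1}$ quoted earlier in the excerpt. Once that is used consistently, everything is linear algebra in the fibre $\HH_p$ together with the decomposition $\TG|_S = \TS\oplus\mathrm{span}\{\nu\}$. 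A secondary point is that $|\PH\nu|$ is only $\cont^1$ (since $S$ is $\cont^2$) and vanishes on $C_S$, so the second identity is stated only at non-characteristic points; there is no difficulty there, but one should note that $Z$ and $\div\ts Z$ are well-defined and continuous on $S\setminus C_S$, which suffices for the applications to the variational formulae.
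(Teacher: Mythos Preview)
Your proposal is correct and follows essentially the same route as the paper: both arguments split $X$ and $\nn$ into their tangential and normal components with respect to $S$, use the identity $(Y\LL\sigma\rr^n)|_S=\langle Y,\nu\rangle\,\sigma\rr^{n-1}$ together with $\nn\op=|\PH\nu|\,\nu$, and observe that the two ``like--like'' cross-terms vanish (one because $n$ tangent vectors to an $(n-1)$-dimensional space are dependent, the other because $\nu\LL\nu\LL\sigma\rr^n=0$). Your derivation of the second identity via $d(Z\LL\sigma\rr^{n-1})=\Lie_Z\sigma\rr^{n-1}=(\div\ts Z)\,\sigma\rr^{n-1}$ for the tangent field $Z$ is exactly what the paper does in its last line; the preliminary remark invoking Cartan's formula for $\Lie_X\per$ on $S$ with the ambient field $X$ is unnecessary (and slightly delicate since $X$ need not be tangent), but you do not actually rely on it.
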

\begin{proof}We have
\begin{eqnarray*}d(X\LL\per)|_S &=& (X\LL\nn\LL\sigma\rr^n)|_S\\&=&d\left( \left( X\ot + X\op\right)\LL \left( \nn\ot + \nn\op\right)\LL\sigma\rr^n\right)\big|_S\\&=&d\left( X\ot \LL  \nn\op \LL\sigma\rr^n\right)\big|_S+
d\left( \nn\ot\LL X\op \LL\sigma\rr^n\right)\big|_S\\&=&d \left(X\ot\LL\per\right)\big|_S+d \left(\nn\ot\LL\langle X,\nu\rangle\sigma\rr^{n-1}\right)\big|_S\\&=&\div\ts \left( X\ot|\PH\nu|-\langle X,\nu\rangle\nn\ot \right) \,\sigma\rr^{n-1}\res S.\end{eqnarray*}
\end{proof}
 \begin{oss}\label{mistake} The previous calculation corrects a mistake in \cite{Monteb}, where the normal component of the vector field $X$
 was omitted. This has caused the loss of some divergence-type terms in some of the variational formulae proved there. 
\end{oss}
We would like to stress that the importance of the previous  calculation in the development of this paper comes from the well-known Cartan's identity for the Lie derivative of a differential form; see \cite{Ch1}, \cite{Lee}. More precisely, let $M$ be  a smooth manifold, let $\omega\in\Omega^k(M)$ be a differential  $k$-form on $M$ and let $X\in\XX(\TT M)$ be a differentiable vector field on $M$, with associated flow $\phi_t:M\longrightarrow M$. We recall that the Lie derivative of $\omega$  with respect to $X$, is defined by $\Lie_X\omega:=\frac{d}{dt}\phi_t^\ast\omega\big|_{t=0},$ where $\phi_t^\ast\omega$ denotes the pull-back of $\omega$ by $\phi_t$. Then, Cartan's identity says that \begin{equation}\label{ld}\Lie_X\omega= (X\LL d\omega) +d(X\LL\omega).\end{equation}This  is a very useful tool in proving variational formulae, not only for the case of  Riemannian volume forms, for which we refer the reader  to Spivak's book \cite{Spiv} (see Ch. 9, pp. 411-426 and 513-535), but even for more general functionals; see, for instance, \cite{Her}, \cite{Griff}. In Section \ref{varformsec}, we shall apply this method to write down the 1st and 2nd variation formulae for the $\HH$-perimeter measure $\per$.
But let us say something more about the 1st variation formula. 
So let $S\subset\GG$ be a hypersurface of class $\cont^2$. We remark that the Lie derivative of $\per$ with respect to $X$ can be calculated elementarily as follows. 
We begin with the first term in formula \eqref{ld}. We have$$ X\LL d \per=X\LL d(\nn\LL\sigma\rr^n)= X\LL\left(\div\,\nn \sigma\rr^n\right)=\langle X,\nu\rangle\,\div\,\nn\,\sigma\rr^{n-1}.$$Note that $\div\,\nn =\div\cc\nn=-\MS$. More precisely $$\div\,\nn =\sum_{i=1}^n\langle\nabla_{X_i}\nn,
{X_i}\rangle=\sum_{i=1}^\DH X_i({\nn}_i)=\div\cc\nn=-\MS.$$The second term in formula \eqref{ld} has been already computed in Lemma \ref{fondam}. Thus, we can  conclude that
\begin{equation}\label{9}
 \Lie_X\per=\left(-\MS\langle X,\nu\rangle +\div\ts \left( X\ot|\PH\nu|-\langle X,\nu\rangle\nn\ot \right)\right)\,\sigma\rr^{n-1},
\end{equation}at each non-characteristic point of $S$. We will return on this point in Section \ref{varformsec}.
\begin{oss}\label{99}Roughly speaking,  formula \eqref{9}   gives the \textquotedblleft infinitesimal\textquotedblright 1st variation of the measure $\per$. However,  in order to integrate the function $\Lie_X\per$ over a $\cont^2$  hypersurface  $S$,  we have to require that $\MS$ is locally integrable on $S$, with respect to the Riemannian measure $\sigma\rr^{n-1}$, i.e. 
\begin{equation}\label{Hint}\MS\in L^1_{loc}(S; \sigma\rr^{n-1}). 
\end{equation}Indeed, in general, $\MS$ fails to be integrable locally around the characteristic $C_S$; see, for instance, \cite{gar2}. Note that \eqref{Hint} implies the integrability of the function $\Lie_X\per$. Clearly, if $C_S=\emptyset$, then  \eqref{Hint} is automatically satisfied because, if $S$ is of class $\cont^2$, then $\MS\in\cont(S)$.
\end{oss}

\begin{oss}[Riemannian case]\label{C1} We would like to stress the analogy with the 1st variation of $\sigma\rr^{n-1}$ for a hypersurface $S$ of class $\cont^i$, $i\geq 1$, immersed in the Euclidean space $\Rn$. It is well-known that the 1st variation formula is given by $I_S(\sigma\rr^{n-1})=\int_S\div\ts W\,\sigma\rr^{n-1};$ see Simon's book \cite{Simon}, Ch. 2, $\S$ 9, pp. 48-53. In the $\cont^1$-case,  the variation vector $W$  cannot be decomposed  in its normal and tangential parts.  Clearly, this can be done if $S$ is of class $\cont^2$. In this case $$I_S(\sigma\rr^{n-1})=\int_S\div\ts W\,\sigma\rr^{n-1}=\int_S\left( \langle W\op,\nu\rangle\,\div\ts \nu+\div\ts W\ot\right) \,\sigma\rr^{n-1}.$$Note that $-\mathcal H\rr=\div\ts\nu$. Hence, we have two contributions. The first is given by $-\int_S \mathcal H\rr\langle W\op,\nu\rangle\,\sigma\rr^{n-1}$ and only depends on the normal component of the variation vector $W$. By means of the divergence theorem, the second term can be transformed in a boundary integral\footnote{In this case, we further assume that $\partial S$ is a $(n-2)$-dimensional submanifold of class $\cont^1$ oriented by the outward unit normal vector $\eta$.} given by $\int_{\partial S}\langle W\ot,\eta\rangle\,\sigma\rr^{n-2}$ which really depends only on the tangential component of $W$. 
\end{oss}

\

\begin{oss}[Horizontal variations]Let $S\subset\GG$ be a compact hypersurface of class $\cont^2$ and $\nu$ the outward-pointing unit normal vector along $S$. We  observe that formula \eqref{fipp} generalizes to the following:\begin{eqnarray}\label{opophparts} \int_{S}\lh X\,\per=-\int_{S}\MS\langle X, \nn\rangle\,\per +
\int_{\partial S}\langle
X,\eta\ss\rangle\,\nis\qquad \forall\,\,X\in\XX^1(\HH);\end{eqnarray}see, for instance, Corollary 3.19 and Theorem 4.3 in \cite{Monteb}.  Note that \eqref{opophparts} holds if either $C_S=\emptyset$ or, if $C_S\neq \emptyset$,   whenever $X\in\cont^1(S\setminus C_S, \HH)$ is an admissible vector field.
\end{oss}

Formula \eqref{opophparts} can be seen as a particular case of the  1st variation formula of the $\HH$-perimeter; see formula \eqref{fva} in Theorem \ref{1vg} below.
Actually, note that if  $X=X\cc\in\XH$, then
\begin{eqnarray*}X\cc\ot|\PH\nu|-\langle X\cc\op,\nu\rangle\nn\ot&=&\left(X\cc-|\PH\nu|\langle X\cc,\nn\rangle\nu\right) |\PH\nu|-|\PH\nu|\langle X\cc ,\nu\rangle\left(\nn-|\PH\nu|\nu\right)\\&=&\left(X\cc-\langle X\cc ,\nu\rangle\nn\right)|\PH\nu|\quad\Big(=:X\ss|\PH\nu|\Big),\end{eqnarray*}where we have used the fact that $\nu=|\PH\nu|\nn+\sum_{\alpha\in I\vv}\nu_\alpha X_\alpha$ at each non-characteristic point. Now inserting this  into  \eqref{9} yields 
\begin{eqnarray*} 
 \Lie_{X\cc}\per&=&\left(-\MS\langle X\cc,\nu\rangle +\div\ts\left(X\ss|\PH\nu|\right)\right)\,\sigma\rr^{n-1}\\&=& -\MS\langle X\cc,\nn\rangle\,\per +\div\ts\left(X\ss|\PH\nu|\right).
\end{eqnarray*}Hence,  integrating this expression along $S$ and  using Corollary \ref{GDc}, the  claim follows.

\section{Some technical preliminaries about the connection 1-forms}\label{tecsec1}
 
 Let $S\subset\GG$ be a 
hypersurface of class $\cont^2$ and let $U\subset\GG$ be an open set having non-empty
intersection with $S$ and such that $\UU:=U\cap S$ is
non-characteristic. We start with an elementary calculation.

\begin{lemma}\label{inlemma} We have $\div\ts\nn=-\MS-\left\langle C(\P\vv)\nn, \P\vv\nu\right\rangle$, where $ C(\P\vv):=\sum_{\alpha\in I\vv}\nu_\alpha C^\alpha$.
\end{lemma}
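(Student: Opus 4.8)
The plan is to compute the Riemannian tangential divergence $\div\ts\nn$ of the unit $\HH$-normal $\nn$ directly, using an adapted orthonormal frame, and to isolate the \emph{vertical} correction term that distinguishes it from the horizontal divergence $\div\cc\nn=-\MS$. First I would pick an adapted frame $\underline{\tau}=\{\tau_1,\dots,\tau_n\}$ to $\UU$ on $U$ in the sense of Definition \ref{movadafr}, so that $\tau_1=\nn$, $\{\tau_2,\dots,\tau_\DH\}$ is an o.n.\ basis of $\HS$, and $\tau_\alpha=X_\alpha$ for $\alpha\in I\vv$. Recall that $\TS=\HS\oplus\HS^\bot$ with $\HS^\bot=\mathrm{span}_\R\{\TB_\alpha:\alpha\in I\vv\}$, where $\TB_\alpha=\tau_\alpha-\varpi_\alpha\tau_1$; but the $\TB_\alpha$ are not orthonormal, so I would instead write the tangential divergence as
$$\div\ts\nn=\sum_{j\in I\ss}\langle\nabla_{\tau_j}\nn,\tau_j\rangle+\sum_{\alpha\in I\vv}\langle\nabla_{e_\alpha}\nn,e_\alpha\rangle,$$
where $\{e_\alpha\}$ is an o.n.\ basis of $\HS^\bot$ obtained from $\{\TB_\alpha\}$ by Gram--Schmidt. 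A cleaner route is to observe that $\nu=|\PH\nu|\,\nn+\sum_{\alpha\in I\vv}\nu_\alpha X_\alpha$ and that $\div\ts\nn=\div_\GG\nn-\langle\nabla_\nu\nn,\nu\rangle$ (the ambient divergence minus the normal contribution), since $\{\tau_1,\dots,\tau_{\DH},\tau_{\DH+1},\dots,\tau_n\}$ together with $\nu$ spans everything after accounting for the one missing tangential direction.

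Concretely, the key identity I would use is $\div_\GG\nn=\sum_{i=1}^n\langle\nabla_{X_i}\nn,X_i\rangle=\sum_{i=1}^\DH X_i(\nn_i)=\div\cc\nn=-\MS$, which is already recorded in the excerpt (the computation just after formula \eqref{9}); this uses that $\nn$ is horizontal and that $\nabla_{X_\alpha}X_\alpha$ has no $\HH$-component contributing for $\alpha\in I\vv$, together with the structure of the Levi--Civita connection in terms of structural constants. Then $\div\ts\nn=\div_\GG\nn-\langle\nabla_\nu\nn,\nu\rangle=-\MS-\langle\nabla_\nu\nn,\nu\rangle$. It remains to evaluate $\langle\nabla_\nu\nn,\nu\rangle$. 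Since $\langle\nn,\nu\rangle=|\PH\nu|$ is differentiated separately and $\nabla$ is metric, I would instead expand $\nu$ inside: writing $\nu=|\PH\nu|\nn+\P\vv\nu$, orthogonality of $\nn$ with $\P\vv\nu$ and the Leibniz rule give
$$\langle\nabla_\nu\nn,\nu\rangle=\langle\nabla_\nu\nn,\P\vv\nu\rangle=\sum_{\alpha\in I\vv}\nu_\alpha\langle\nabla_\nu\nn,X_\alpha\rangle,$$
using $\langle\nabla_\nu\nn,\nn\rangle=0$. Expanding $\nu$ once more in the remaining slot and using $\langle\nabla_{X_\beta}\nn,X_\alpha\rangle=\langle\nabla_{X_\beta}X_1^\tau,X_\alpha\rangle$ together with the connection-1-form identity \eqref{fr} and the antisymmetry/compatibility properties of $\gc$ versus the full $\nabla$, the terms reorganize so that only the structural constants $\SC^\alpha_{ij}$ with $\alpha\in I\vv$ survive, yielding exactly $\langle C(\P\vv)\nn,\P\vv\nu\rangle$ with $C(\P\vv)=\sum_{\alpha\in I\vv}\nu_\alpha C^\alpha$. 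Substituting back gives the claimed formula $\div\ts\nn=-\MS-\langle C(\P\vv)\nn,\P\vv\nu\rangle$.

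The main obstacle I anticipate is bookkeeping the cross-terms: when $\nu$ is substituted into both the differentiating slot and the evaluated slot of $\langle\nabla_\bullet\nn,\bullet\rangle$, one gets a horizontal--horizontal block, a horizontal--vertical block, and a vertical--vertical block, and one must check that the horizontal block is absorbed into $-\MS=\div\cc\nn$ while only the mixed block contributes the stated term, with the purely vertical block vanishing because $[\HH_i,\HH_j]\subseteq\HH_{i+j}$ forces $\nabla_{X_\alpha}X_\beta$ for $\alpha,\beta\in I\vv$ to have no component along $\HH=\HH_1$ relevant here (indeed $\langle\nabla_{X_\alpha}X_\beta,X_1\rangle$ involves $\SC$'s that vanish by the grading \eqref{notabilmente}). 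Carefully using the explicit formula $\nabla_{X_i}X_j=\tfrac12\sum_r(\SC^r_{ij}-\SC^i_{jr}+\SC^j_{ri})X_r$ and the skew-symmetry of the structural constants should make all the cancellations transparent; this is a routine but slightly delicate computation, and I would present it using the adapted frame and the grading relations \eqref{stcon}--\eqref{notabilmente} to keep it short.
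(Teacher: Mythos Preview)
Your approach is exactly the paper's: write $\div\ts\nn=\div\,\nn-\langle\nabla_\nu\nn,\nu\rangle$, use $\div\,\nn=-\MS$, and evaluate the correction term via the structural constants. Your first reduction $\langle\nabla_\nu\nn,\nu\rangle=\langle\nabla_\nu\nn,\P\vv\nu\rangle$ (from $\langle\nabla_\nu\nn,\nn\rangle=0$) is precisely how the paper proceeds.

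One correction to your bookkeeping expectations in the final paragraph: you have the roles of the blocks reversed. After your reduction the evaluated slot is already vertical, so the only remaining split is in the \emph{differentiating} slot. When that slot is horizontal (i.e.\ $\langle\nabla_{\PH\nu}\nn,\P\vv\nu\rangle$), the term \emph{vanishes} by skew-symmetry, since $\PH\nu=|\PH\nu|\nn$ gives $\tfrac{|\PH\nu|}{2}\sum_\alpha\nu_\alpha\sum_{i,j\in I\cc}\nn_i\nn_j\,\SC^\alpha_{ij}=0$. It is the block with the differentiating slot \emph{vertical} that survives and yields the answer:
\[
\sum_{j\in I\cc}\sum_{\alpha,\beta\in I\vv}\nn_j\,\nu_\alpha\nu_\beta\,\langle\nabla_{X_\alpha}X_j,X_\beta\rangle
=\sum_{j\in I\cc}\sum_{\alpha,\beta\in I\vv}\nn_j\,\nu_\alpha\nu_\beta\,\tfrac{1}{2}\bigl(\SC^{\beta}_{\alpha j}+\SC^{\alpha}_{\beta j}\bigr)
=\langle C(\P\vv\nu)\nn,\P\vv\nu\rangle,
\]
which is the paper's one-line computation (here $\SC^j_{\alpha\beta}=0$ by the grading \eqref{notabilmente}, not $\langle\nabla_{X_\alpha}X_j,X_\beta\rangle$). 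There is no ``horizontal--horizontal'' block to absorb into $-\MS$: that quantity comes entirely from $\div\,\nn$, not from $\langle\nabla_\nu\nn,\nu\rangle$. This does not invalidate your strategy, only the narrative of which cancellations to expect.
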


\begin{proof}We have $\div\ts\nn=\div\,\nn-\langle\nabla_{\nu}\nn,\nu\rangle$. Since $\div\,\nn= -\MS$, the thesis follows from$$\langle\nabla_{\nu}\nn,\nu\rangle=\sum_{j\in I\cc}\sum_{\alpha, \beta\in I\vv}{\nn}_j\nu_\alpha\nu_\beta\langle\nabla_{X_\alpha}X_j,X_\beta\rangle=\sum_{j\in I\cc}\sum_{\alpha, \beta\in I\vv}{\nn}_j\nu_\alpha\nu_\beta\dfrac{\left(\SC_{\alpha j}^\beta+\SC_{\beta j}^\alpha\right)}{2}= \left\langle C(\P\vv\nu)\nn, \P\vv\nu\right\rangle.$$\end{proof}
\begin{oss}\label{remfac}We have
   \begin{equation}\label{gghh}
    -\MS=\div\cc\nn=\div\cc\left(\frac{\PH\nu}{|\PH\nu|}\right)
  =\frac{\div\cc(\PH\nu)-\left\langle\grad\cc|\PH\nu|, \nn\right\rangle}{|\PH\nu|}.\end{equation} Since $|\PH\nu|$ is Lipschitz
continuous, it follows that $\MS\in L_{loc}^1(S; \per)$, but not necessarily $L_{loc}^1(S; \sigma\rr^{n-1})$. Note also that the last condition follows by assuming $\frac{1}{|\PH\nu|}\in L_{loc}^1(S; \sigma\rr^{n-1})$.\end{oss}

\begin{lemma}\label{primavolta}The following identities hold:

\begin{itemize}

\item[{\rm (i)}] $\phi_{1i}(\tau_j)=\phi_{1j}(\tau_i) + \left\langle
C\cc(\varpi\cd)\tau_i,\tau_j\right\rangle\quad\forall\,\,i, j\in I\ss$;  \item[{\rm
(ii)}]$\phi_{1i}(\TB_\alpha)
=\tau_i(\varpi_{\alpha})+\frac{1}{2}\left\langle
C^\alpha\cc\tau_1,\tau_i\right\rangle - \left\langle
C(\varpi)\TB_\alpha,\tau_i\right\rangle\quad\forall\,\,i\in I\cc\,\,\forall\,\,\alpha\in I\vv;$

\item[{\rm (iii)}]$\phi_{i\alpha}(\tau_j)=\phi_{j\alpha}(\tau_i) +
\left\langle C\cc^\alpha\tau_i,\tau_j\right\rangle\quad\forall\,\,i, j\in I\cc \, \,\forall\,\,
\alpha\in I\vv;$

\item[{\rm (iv)}]$\TB_\alpha (\varpi_\beta)- \TB_\beta
(\varpi_\alpha) = \left\langle
C(\varpi)\TB_\beta,\TB_\alpha\right\rangle\quad\forall\,\,\alpha,\beta \in I\vv$;

\item[{\rm (v)}] $\phi_{i\alpha}(\tau_\alpha)=0\quad\forall\,\,i\in
I\cc\,\, \forall\,\,\alpha\in I\vv;$ \item[{\rm (vi)}] $\phi_{\alpha
i}(\tau_i)=0\quad\forall\,\,i\in I\cc\,\,\forall\,\,\alpha\in I\vv$;\item[{\rm (vii)}]
$\phi_{i\alpha}(\tau_j)=\frac{1}{2}\left\langle
C^\alpha\cc\tau_i,\tau_j\right\rangle\quad\forall\,\,i, j\in I\cc\,\,\forall\,\,\alpha\in
I\vv$.

\end{itemize}

\end{lemma}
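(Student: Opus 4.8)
The plan is to derive each identity from the two structural facts available: Cartan's first structural equation $d\phi_i=\sum_j\phi_{ij}\wedge\phi_j$ (equivalently, the torsion-free condition packaged as formula \eqref{fr}, $C_{ij}^k=\phi_{jk}(\tau_i)-\phi_{ik}(\tau_j)$), together with the metric-compatibility relation $\phi_{ij}=-\phi_{ji}$, and the explicit bracket structure of a graded frame. The key computational input is that $\underline{\tau}$ is an adapted frame: $\tau_1=\nn$, $\{\tau_2,\dots,\tau_\DH\}$ spans $\HS$, $\tau_\alpha=X_\alpha$ for $\alpha\in I\vv$, and $\TB_\alpha=\tau_\alpha-\varpi_\alpha\tau_1$. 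I would first record the brackets $[\tau_i,\tau_j]$ in terms of the structural constants $\SC_{ij}^r$ and the functions $\varpi_\alpha$, using \eqref{notabilmente} to control which components survive the grading; in particular $[\tau_i,\tau_j]$ for $i,j\in I\cc$ is vertical, so $C_{ij}^k=0$ for $k\in I\cc$, while $C_{ij}^\alpha=\SC_{ij}^\alpha$ for $\alpha\in I\cd$.

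For (i), (iii), (vii) I would apply \eqref{fr} directly. Item (iii): for $i,j\in I\cc$ and $\alpha\in I\vv$, $C_{ij}^\alpha=\phi_{j\alpha}(\tau_i)-\phi_{i\alpha}(\tau_j)$; since $[\tau_i,\tau_j]$ is vertical and lands (by the grading) in $\HH_2$, its $\tau_\alpha$-component is $\langle C\cc^\alpha\tau_i,\tau_j\rangle$ when $\alpha\in I\cd$ and $0$ otherwise, giving (iii) in the stated compact form. Item (vii) then follows by combining (iii) with the skew/symmetry bookkeeping: writing the same identity with $i,j$ swapped and using $\phi_{i\alpha}=-\phi_{\alpha i}$ together with the fact that, because $\nabla$ restricted to horizontal directions is the $\HH$-connection which is flat and torsion-free, the "vertical part" of $\nabla_{\tau_i}\tau_j$ is purely the Levi-Civita correction $\tfrac12(\SC_{ij}^\alpha-\SC_{j\alpha}^i+\SC_{\alpha i}^j)$, whose symmetric-in-$(i,j)$ piece is $\tfrac12\SC^\alpha_{?}$—here one uses $\SC_{j\alpha}^i+\SC_{\alpha i}^j$-type cancellations coming from \eqref{notabilmente} (the terms $\SC_{j\alpha}^i$ vanish since $i\in I\cc$ cannot be the bracket-target of something in $I\cc$ and $I\vv$). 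This pins $\phi_{i\alpha}(\tau_j)=\tfrac12\langle C^\alpha\cc\tau_i,\tau_j\rangle$. Item (i) is the specialization $\phi_{1i}(\tau_j)-\phi_{1j}(\tau_i)=C_{ij}^1+\langle C\cc(\varpi\cd)\tau_i,\tau_j\rangle$, where I would compute $C_{ij}^1=\langle[\tau_i,\tau_j],\nn\rangle$; since $[\tau_i,\tau_j]$ is vertical, $\langle[\tau_i,\tau_j],\nn\rangle=0$ unless — wait, $\nn=\tau_1$ is horizontal, so this vanishes, and the correction $\langle C\cc(\varpi\cd)\tau_i,\tau_j\rangle$ must enter through $\phi_{1i}$ itself via $\langle\nabla_{\tau_j}\tau_i,\nn\rangle$; concretely, $\phi_{1i}(\tau_j)=\langle\nabla_{\tau_j}\tau_i,\tau_1\rangle=-\langle\nabla_{\tau_j}\tau_1,\tau_i\rangle$, and expanding $\nabla_{\tau_j}\tau_1$ using the Levi-Civita formula together with $\tau_1=\nn=\sum_k\nn_kX_k$ and the earlier identity $A\cc=\tfrac12 C\ss(\varpi\cd)$ produces exactly the asymmetry term. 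So (i) is really a restatement of the non-symmetry of $B\cc$, which is already known.

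For the "vanishing" items (v) and (vi): $\phi_{\alpha i}(\tau_i)=\langle\nabla_{\tau_i}\tau_i,\tau_\alpha\rangle$ with no sum — but since $\gc$ is flat, one can choose the horizontal part of the adapted frame so that $\gc_{\tau_i}\tau_j=0$ along $\UU$ (synchronous/geodesic frame for the flat $\HH$-connection), hence $\nabla_{\tau_i}\tau_i$ has only the Levi-Civita vertical correction $\tfrac12\sum_\alpha(\SC_{ii}^\alpha - \SC_{i\alpha}^i+\SC_{\alpha i}^i)X_\alpha=\tfrac12\sum_\alpha(-\SC_{i\alpha}^i-\SC_{i\alpha}^i)X_\alpha$; and $\SC_{i\alpha}^i=\langle[\tau_i,\tau_\alpha],\tau_i\rangle=0$ because the bracket of an $I\cc$-element and an $I\vv$-element lands in $\HH_{1+l}$ with $l\ge2$, never back in $\HH_1$. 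That gives $\phi_{i\alpha}(\tau_\alpha)=0$ (no sum) and by skew-symmetry $\phi_{\alpha i}(\tau_i)=0$. Items (ii), (iv) are the "vertical Cartan" identities: apply \eqref{fr} with the arguments $\TB_\alpha,\TB_\beta$ (resp. $\TB_\alpha,\tau_i$), expand $\TB_\alpha=\tau_\alpha-\varpi_\alpha\tau_1$, use linearity of the connection forms, the already-proven (i),(iii),(v),(vii), and the bracket $[\tau_\alpha,\tau_\beta]=[X_\alpha,X_\beta]=\sum_r\SC_{\alpha\beta}^rX_r$; the $\tau_i$-derivative terms $\tau_i(\varpi_\alpha)$ and $\TB_\alpha(\varpi_\beta)$ appear from differentiating the scalar coefficients $\varpi_\alpha$, and the $\tfrac12\langle C^\alpha\cc\tau_1,\tau_i\rangle$ term in (ii) is exactly the (vii)-value $\phi_{i\alpha}(\tau_1)$ specialized to the horizontal-normal direction.

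The main obstacle I anticipate is the bookkeeping in (ii) and (iv): one must carefully track how $\varpi_\alpha\tau_1$ contributes both a genuine derivative term (when $\TB$ hits the coefficient) and a connection-form term (when it hits $\tau_1$), and then reorganize the structural-constant sums into the matrices $C^\alpha\cc$, $C(\varpi)$, $C\cc(\varpi\cd)$ using \eqref{notabilmente} to discard the many vanishing blocks — e.g. $\varpi_\alpha C^\alpha$ restricted to $\HS^\bot$-directions only sees the step-compatible components. Everything else is a routine, if lengthy, unwinding of Cartan's equations for a synchronous adapted frame, and I would present it as a sequence of short displayed computations, one per item, rather than a single monolithic calculation.
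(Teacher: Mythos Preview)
Your overall strategy of reading everything off formula~\eqref{fr} together with the Levi--Civita coefficients and the grading constraint~\eqref{notabilmente} is the right one, and items (iii), (v), (vi), (vii) go through essentially as you sketch (though for (v)--(vi) you do not need, and should not invoke, a synchronous frame: the identities are stated for an \emph{arbitrary} adapted frame, and they follow simply because $\langle\nabla_{X_k}X_l,X_\alpha\rangle=\tfrac12\SC^\alpha_{kl}$ is skew in $k,l$ while $\nabla_{X_\alpha}X_\alpha=0$ by the grading).

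The gap is in your treatment of (i), and by extension (ii) and (iv). For $i,j\in I\ss$ the vectors $\tau_i,\tau_j$ are horizontal but \emph{not} left-invariant, so $[\tau_i,\tau_j]$ is \emph{not} vertical: it has a genuine horizontal piece coming from derivatives of the coefficient functions. Your claim that $\langle[\tau_i,\tau_j],\nn\rangle=0$ for this reason is therefore false. Your fallback, invoking $A\cc=\tfrac12 C\ss(\varpi\cd)$, is circular: that identity is Lemma~\ref{yuiq}, whose proof in the paper is precisely an application of (i). What you are missing is the one structural fact the paper's proof singles out: \emph{the Lie bracket of vector fields tangent to $S$ is again tangent to $S$}. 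Since $\tau_i,\tau_j\in\XX(\TS)$ one has $\langle[\tau_i,\tau_j],\nu\rangle=0$; writing $\nu=|\PH\nu|(\nn+\varpi)$ gives
\[
\langle[\tau_i,\tau_j],\nn\rangle=-\sum_{\alpha\in I\vv}\varpi_\alpha\langle[\tau_i,\tau_j],X_\alpha\rangle,
\]
and the right-hand side, being a vertical pairing, \emph{does} reduce to structural constants and yields exactly $\langle C\cc(\varpi\cd)\tau_i,\tau_j\rangle$. The same tangency trick, applied to the tangent fields $\TB_\alpha$ and $\tau_i$ (resp.\ $\TB_\alpha$ and $\TB_\beta$), is what produces the derivative terms $\tau_i(\varpi_\alpha)$ and $\TB_\alpha(\varpi_\beta)-\TB_\beta(\varpi_\alpha)$ in (ii) and (iv). Without it your expansions will not close.
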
\begin{proof}By direct computation. In particular, using the fact that the Lie brackets of tangent vector fields along $S$ is still tangent; for a detailed proof, see \cite{Monteb}.
\end{proof}

\begin{lemma}\label{yuiq}The matrix of the linear operator $B\cc$ can be written out as a sum of two
matrices, one symmetric and the other skew-symmetric, i.e. $B\cc=
S\cc + A\cc,$ where the skew-symmetric matrix
 $A\cc$ is  given by  $A\cc=\frac{1}{2}\,C\cc(\varpi\cd)|\ss$.
\end{lemma}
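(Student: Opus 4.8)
The plan is to compute $B\cc(\tau_i,\tau_j)=\langle\gc_{\tau_i}\tau_j,\nn\rangle$ explicitly in an adapted orthonormal frame and read off its symmetric and skew-symmetric parts directly. First I would recall that $\nn=\tau_1$ and that $\gc_XY=\nabla_XY-\langle\nabla_XY,\nn\rangle\nn$ projected further onto $\HH$, so that for $X,Y\in\HS$ one has $\langle\gc_XY,\nn\rangle=\langle\nabla_XY,\tau_1\rangle=-\langle\nabla_X\tau_1,Y\rangle=-\phi_{1}{}_{?}$; more precisely, using the connection $1$-forms of the adapted coframe $\underline{\phi}$, for $i,j\in I\ss$ we have $B\cc(\tau_i,\tau_j)=\langle\nabla_{\tau_i}\tau_j,\tau_1\rangle=\phi_{1j}(\tau_i)$. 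Then I would symmetrize and antisymmetrize in $(i,j)$: the symmetric part is $S\cc(\tau_i,\tau_j)=\tfrac12\big(\phi_{1j}(\tau_i)+\phi_{1i}(\tau_j)\big)$ and the skew part is $A\cc(\tau_i,\tau_j)=\tfrac12\big(\phi_{1j}(\tau_i)-\phi_{1i}(\tau_j)\big)$.

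The key input is Lemma \ref{primavolta}(i), which gives exactly $\phi_{1i}(\tau_j)-\phi_{1j}(\tau_i)=\langle C\cc(\varpi\cd)\tau_j,\tau_i\rangle$ for $i,j\in I\ss$ (equivalently, $\phi_{1j}(\tau_i)-\phi_{1i}(\tau_j)=\langle C\cc(\varpi\cd)\tau_i,\tau_j\rangle$ by skew-symmetry of $C\cc(\varpi\cd)$). Plugging this into the antisymmetrization formula above yields immediately
\[
A\cc(\tau_i,\tau_j)=\tfrac12\,\langle C\cc(\varpi\cd)\tau_i,\tau_j\rangle\qquad\forall\,i,j\in I\ss,
\]
which is precisely the claim $A\cc=\tfrac12\,C\cc(\varpi\cd)|\ss$ at the level of matrices in the adapted frame (recall $C\ss(\varpi\cd)$ denotes the restriction of $C\cc(\varpi\cd)$ to $\HS$, as in Definition \ref{defini}). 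The decomposition $B\cc=S\cc+A\cc$ with $S\cc$ symmetric then holds tautologically by construction. I would also note that $C\cc(\varpi\cd)$ is skew-symmetric because each $C^\alpha\cc=[\SC^\alpha_{ij}]$ is skew-symmetric (the structural constants are skew in $i,j$), so $A\cc$ is genuinely the skew part.

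The only real subtlety is bookkeeping: making sure the identification of $B\cc(\tau_i,\tau_j)$ with the correct connection $1$-form $\phi_{1j}(\tau_i)$ is done with the right index conventions (the definition $\phi_{ij}(X):=\langle\nabla_X\tau_j,\tau_i\rangle$ must be used consistently), and that the restriction to $\HS=\mathrm{span}\{\tau_2,\dots,\tau_\DH\}$ is what appears — i.e., we only range over $i,j\in I\ss$ and the $\alpha$-directions in $C\cc(\varpi\cd)=\sum_{\alpha\in I\cd}\varpi_\alpha C^\alpha\cc$ are the second-layer indices, consistently with \eqref{notabilmente}. This is not a deep obstacle but it is where an error would most easily creep in; everything else is a one-line consequence of Lemma \ref{primavolta}(i). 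For full details I refer to \cite{Monteb}.
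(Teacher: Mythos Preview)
Your approach is correct and essentially identical to the paper's: the paper's entire proof is the one-liner ``It is sufficient to apply (i) of Lemma~\ref{primavolta}'', and you have simply unpacked that application by identifying $B\cc(\tau_i,\tau_j)$ with the connection $1$-form and antisymmetrizing. One small caveat: your transcription of Lemma~\ref{primavolta}(i) has a sign slip (the lemma gives $\phi_{1i}(\tau_j)-\phi_{1j}(\tau_i)=\langle C\cc(\varpi\cd)\tau_i,\tau_j\rangle$, not $\langle C\cc(\varpi\cd)\tau_j,\tau_i\rangle$), which propagates to your ``equivalently'' line; this is exactly the bookkeeping pitfall you yourself flagged, and once the indices are straightened out the argument goes through verbatim.
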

\begin{proof}It is sufficient to apply (i) of Lemma \ref{primavolta}.
\end{proof}

\begin{lemma}\label{kl} One has ${\rm Tr}\left(B^2\cc\right)=\|S\cc\|\ngr^2-\|A\cc\|\ngr^2=\sum_{j, k\in I\ss} \phi_{1k}(\tau_j)\phi_{1j}(\tau_k).$
\end{lemma}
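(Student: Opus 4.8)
The plan is to compute $\mathrm{Tr}(B^2\cc)$ directly in terms of the connection $1$-forms, using the adapted orthonormal frame $\underline\tau$ introduced in Definition \ref{movadafr}. First I would record the matrix entries of $B\cc$ in this frame: for $j,k\in I\ss$ we have $(B\cc)_{kj}=\langle\gc_{\tau_j}\tau_k,\nn\rangle=\langle\nabla_{\tau_j}\tau_k,\tau_1\rangle=\phi_{1k}(\tau_j)$, since $\tau_1=\nn$ and the horizontal part of $\nabla$ is $\gc$. Thus the $(\DH-1)\times(\DH-1)$ matrix representing $B\cc$ relative to the basis $\{\tau_2,\dots,\tau_\DH\}$ of $\HS$ has entries $\phi_{1k}(\tau_j)$.

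Next I would use the decomposition $B\cc=S\cc+A\cc$ into symmetric and skew-symmetric parts. A standard linear-algebra identity gives, for any square matrix $M=S+A$ with $S=S^\top$ and $A=-A^\top$, that $\mathrm{Tr}(M^2)=\mathrm{Tr}(S^2)+\mathrm{Tr}(A^2)+2\mathrm{Tr}(SA)$; but $\mathrm{Tr}(SA)=0$ because $SA$ is a product of a symmetric and a skew-symmetric matrix (its trace equals that of its transpose $-AS$, hence $-\mathrm{Tr}(SA)$, forcing it to vanish). Therefore $\mathrm{Tr}(B^2\cc)=\mathrm{Tr}(S^2\cc)+\mathrm{Tr}(A^2\cc)$. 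Since $S\cc$ is symmetric, $\mathrm{Tr}(S^2\cc)=\|S\cc\|\ngr^2$; since $A\cc$ is skew-symmetric, $\mathrm{Tr}(A^2\cc)=\mathrm{Tr}(-A\cc A\cc^\top)=-\|A\cc\|\ngr^2$. This yields the first equality $\mathrm{Tr}(B^2\cc)=\|S\cc\|\ngr^2-\|A\cc\|\ngr^2$.

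For the second equality I would compute $\mathrm{Tr}(B^2\cc)=\sum_{j,k\in I\ss}(B\cc)_{jk}(B\cc)_{kj}=\sum_{j,k\in I\ss}\phi_{1j}(\tau_k)\phi_{1k}(\tau_j)$, which is exactly $\sum_{j,k\in I\ss}\phi_{1k}(\tau_j)\phi_{1j}(\tau_k)$ after renaming the summation indices. This completes the proof. I do not anticipate a genuine obstacle here; the only point requiring a little care is to make sure the frame is adapted so that the index set for the horizontal tangent directions is precisely $I\ss=\{2,\dots,\DH\}$ and that $\phi_{1k}(\tau_j)$ really represents the $(k,j)$-entry of $B\cc$ (this is where the orthonormality of $\underline\tau$ and the identity $\langle\gc_{X}Y,\nn\rangle=\langle\nabla_XY,\tau_1\rangle$ enter), and to invoke the already-established fact (Lemma \ref{yuiq}, or equivalently part (i) of Lemma \ref{primavolta}) that $B\cc$ splits as $S\cc+A\cc$ with those symmetry properties.
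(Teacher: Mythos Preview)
Your proof is correct and follows essentially the same route as the paper: identify the matrix entries of $B\cc$ in the adapted frame as $\phi_{1k}(\tau_j)$, expand $\mathrm{Tr}(B^2\cc)$ as $\sum_{j,k}(B\cc)_{jk}(B\cc)_{kj}$, and then use the decomposition $B\cc=S\cc+A\cc$ together with $\mathrm{Tr}(S\cc A\cc)=0$ to obtain $\|S\cc\|\ngr^2-\|A\cc\|\ngr^2$. The paper phrases the last step as $\sum_j\langle(S\cc+A\cc)\tau_j,(S\cc-A\cc)\tau_j\rangle$, but this is the same computation as yours.
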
\begin{proof}
We have \begin{eqnarray*}\sum_{j, k\in I\ss} \phi_{1k}(\tau_j)\phi_{1j}(\tau_k)&=&\sum_{j, k\in I\ss} \left\langle\nabla_{\tau_j}\tau_1,\tau_k\right\rangle \left\langle\nabla_{\tau_k}\tau_1,\tau_j\right\rangle  \\&=&\sum_{j, k\in I\ss} (B\cc)_{kj}(B\cc)_{jk}\\&=&{\rm Tr}\left(B^2\cc\right)\\&=&\sum_{j \in I\ss}\left\langle B\cc \tau_j, B\cc^{\rm Tr}\tau_j\right\rangle
\\&=&\sum_{j \in I\ss}\left\langle \left(S\cc +A\cc\right)\tau_j, \left(S\cc -A\cc\right)\tau_j\right\rangle\\&=&\|S\cc\|\ngr^2-\|A\cc\|\ngr^2.\end{eqnarray*}
\end{proof}

\begin{lemma}\label{mk} We have $\sum_{\alpha \in I\vv}\varpi_\alpha\lh\left(C\cc^\alpha\tau_1\right)=2 \|A\cc\|\ngr^2+ |C\cc(\varpi\cd)\tau_1|^2$.
\end{lemma}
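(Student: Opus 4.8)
The identity to establish is
$$\sum_{\alpha \in I\vv}\varpi_\alpha\,\lh\!\left(C\cc^\alpha\tau_1\right)=2\,\|A\cc\|\ngr^2+\left|C\cc(\varpi\cd)\tau_1\right|^2,$$
where $\lh X=\div\ss X+\langle C\cc(\varpi\cd)\nn,X\rangle$ by Definition \ref{Deflh}. My plan is to expand $\lh$ on each $X=C\cc^\alpha\tau_1$ and then exploit linearity of $\alpha\mapsto\varpi_\alpha C\cc^\alpha$, recalling that $C\cc(\varpi\cd)=\sum_{\alpha\in I\cd}\varpi_\alpha C^\alpha\cc$ (only the step-$2$ indices contribute to $\varpi\cd$, and for those $C^\alpha\cc$ is the restriction of the full structural matrix to $\HH$). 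Note first that $C\cc^\alpha\tau_1$ need not lie in $\HS$ a priori, since $C\cc^\alpha\tau_1\in\HH$ and $\HH=\nn\oplus\HS$; however the component along $\nn=\tau_1$ is $\langle C\cc^\alpha\tau_1,\tau_1\rangle=0$ by skew-symmetry of $C\cc^\alpha$, so indeed $C\cc^\alpha\tau_1\in\XX\ss^1(\HS)$ and $\lh$ applies directly.

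The two pieces are handled separately. For the algebraic piece, summing $\langle C\cc(\varpi\cd)\nn,\;\varpi_\alpha C\cc^\alpha\tau_1\rangle$ over $\alpha\in I\vv$ (equivalently $\alpha\in I\cd$, since $\varpi_\alpha C\cc^\alpha=0$ otherwise in this horizontal context) gives exactly $\langle C\cc(\varpi\cd)\tau_1,\;C\cc(\varpi\cd)\tau_1\rangle=\left|C\cc(\varpi\cd)\tau_1\right|^2$, which is one of the two target terms. So the content of the lemma reduces to showing
$$\sum_{\alpha\in I\vv}\varpi_\alpha\,\div\ss\!\left(C\cc^\alpha\tau_1\right)=2\,\|A\cc\|\ngr^2.$$
For this I would write $\div\ss(C\cc^\alpha\tau_1)=\sum_{j\in I\ss}\langle\gs_{\tau_j}(C\cc^\alpha\tau_1),\tau_j\rangle$ and expand using that $C\cc^\alpha$ is a constant matrix in the left-invariant frame (so its "derivative" produces only connection-coefficient terms $\phi_{1k}(\tau_j)$ via $\nabla\tau_1$), reducing the expression to a bilinear combination of the $\phi_{1k}(\tau_j)$'s. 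Then, by Lemma \ref{yuiq}, $A\cc=\tfrac12 C\cc(\varpi\cd)|\ss$, so $4\|A\cc\|\ngr^2=\|C\cc(\varpi\cd)|\ss\|\ngr^2=\sum_{i,j\in I\ss}\langle C\cc(\varpi\cd)\tau_i,\tau_j\rangle^2$, and I would match terms using Lemma \ref{primavolta}(i), which says $\phi_{1i}(\tau_j)-\phi_{1j}(\tau_i)=\langle C\cc(\varpi\cd)\tau_i,\tau_j\rangle$ — i.e. the skew-symmetric part of the matrix $[\phi_{1i}(\tau_j)]$ is precisely $A\cc$. A clean way to see the factor $2$: $\sum_\alpha\varpi_\alpha\,\phi_{k\alpha\text{-type terms}}$ should assemble into $-\mathrm{Tr}\big((A\cc)^2\big)$-style expressions; since $A\cc$ is skew, $\sum_{i,j\in I\ss}(A\cc)_{ij}(A\cc)_{ij}=-\mathrm{Tr}((A\cc)^2)=\|A\cc\|\ngr^2$, and carrying the $\varpi_\alpha$-weighted sum through yields $2\|A\cc\|\ngr^2$ rather than $\|A\cc\|\ngr^2$ because of the combined contribution of the $\tau_j(\varpi_\alpha)$-type terms and the structural constant terms in the expansion of $\gs_{\tau_j}(C\cc^\alpha\tau_1)$.

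The main obstacle I anticipate is bookkeeping: correctly expanding $\div\ss(C\cc^\alpha\tau_1)$ — in particular tracking which terms are genuine derivatives of the coefficients of $\tau_1$ (yielding $\phi_{1k}(\tau_j)$) versus derivatives that vanish because $C\cc^\alpha$ is frame-constant — and then matching the $\varpi_\alpha$-weighted sum against the Frobenius norm $\|A\cc\|\ngr^2$ with the right combinatorial factor. One must also be careful that the sum over $\alpha$ ranges formally over $I\vv$ but effectively over $I\cd$ (by \eqref{notabilmente}, $\SC^\alpha_{ij}=0$ for $i,j\in I\cc$ unless $\alpha\in I\cd$, so $C\cc^\alpha=0$ for $\alpha\notin I\cd$), which is what makes the replacement of $C(\varpi)$-type objects by $C\cc(\varpi\cd)$ legitimate. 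Once the $\phi$-form identities of Lemma \ref{primavolta}(i) are invoked, the remaining computation is purely linear-algebraic and should close quickly.
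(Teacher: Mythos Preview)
Your approach is essentially the paper's: split $\lh=\div\ss+\langle C\cc(\varpi\cd)\nn,\cdot\rangle$, read off $|C\cc(\varpi\cd)\tau_1|^2$ from the second piece, and reduce the first piece to a trace against $B\cc$. The paper does this slightly more directly: using that $C\cc^\alpha$ is constant and skew, $\sum_{j\in I\ss}\langle\nabla_{\tau_j}(C\cc^\alpha\tau_1),\tau_j\rangle=-\sum_{j\in I\ss}\langle\nabla_{\tau_j}\tau_1,C\cc^\alpha\tau_j\rangle=\sum_{j\in I\ss}B\cc(\tau_j,C\ss^\alpha\tau_j)$, and then sums in $\alpha$.

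One point in your sketch is off and would lead you astray in bookkeeping. There are \emph{no} ``$\tau_j(\varpi_\alpha)$-type terms'' in $\sum_\alpha\varpi_\alpha\,\div\ss(C\cc^\alpha\tau_1)$: the weights $\varpi_\alpha$ sit outside the divergence, and $C\cc^\alpha\tau_1$ contains no $\varpi$. The factor $2$ has a clean origin you already half-stated: after summing, the divergence piece equals $\sum_{j\in I\ss}B\cc\big(\tau_j,\,C\ss(\varpi\cd)\tau_j\big)=2\sum_{j\in I\ss}B\cc(\tau_j,A\cc\tau_j)$ because $C\ss(\varpi\cd)=2A\cc$ (Lemma~\ref{yuiq}). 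Then split $B\cc=S\cc+A\cc$ and use the elementary identity $\sum_{j\in I\ss}\langle S\cc\tau_j,A\cc\tau_j\rangle=0$ (symmetric paired with skew traces to zero), leaving $2\|A\cc\|\ngr^2$. That last vanishing is the one step your plan does not mention and is what actually closes the computation.
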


\begin{proof} We have \begin{eqnarray*}\lh\left(C\cc^\alpha\tau_1\right)&=&\sum_{j\in I\ss}\left\langle\nabla_{\tau_j}C\cc^\alpha\tau_1,\tau_j\right\rangle+\left\langle C\cc^\alpha\tau_1,C\cc(\varpi\cd)\tau_1\right\rangle\\&=&-\sum_{j\in I\ss}\left\langle\nabla_{\tau_j}\tau_1,C\cc^\alpha\tau_j\right\rangle+\left\langle C\cc^\alpha\tau_1,C\cc(\varpi\cd)\tau_1\right\rangle\quad\mbox{(by linearity and skew-symmetry)}\\&=&-\sum_{j\in I\ss}\left\langle\nabla_{\tau_j}\tau_1,C\ss^\alpha\tau_j\right\rangle+\left\langle C\cc^\alpha\tau_1,C\cc(\varpi\cd)\tau_1\right\rangle,\end{eqnarray*}where $C\ss^\alpha:=C\cc^\alpha|_{\HS}$. Since
$\left\langle\nabla_{\tau_j}\tau_1,C\ss^\alpha\tau_j\right\rangle=-B\cc(\tau_j, C\ss^\alpha\tau_j)\,\,\forall\,j\in I\ss$, it follows that

\begin{eqnarray*}\sum_{\alpha \in I\vv}\varpi_\alpha\lh\left(C\cc^\alpha\tau_1\right)&=&\sum_{\alpha \in I\vv}\varpi_\alpha\sum_{j\in I\ss}B\cc(\tau_j, C\ss^\alpha\tau_j)+|C\cc(\varpi\cd)\tau_1|^2\\&=& \varpi_\alpha\sum_{j\in I\ss}B\cc(\tau_j, C\ss(\varpi\cd)\tau_j)+ |C\cc(\varpi\cd)\tau_1|^2,\end{eqnarray*}where $C\ss(\varpi\cd)=C\cc(\varpi\cd)|_{\HS}=2 A\cc$; see Lemma \ref{yuiq}. Therefore

\begin{eqnarray*}\sum_{\alpha \in I\vv}\varpi_\alpha\lh\left(C\cc^\alpha\tau_1\right)&=&  2\sum_{j\in I\ss}B\cc(\tau_j, A\cc\tau_j)+ |C\cc(\varpi\cd)\tau_1|^2\\&=& 2\sum_{j\in I\ss}\left\langle(S\cc+A\cc) \tau_j, A\cc\tau_j\right\rangle+ |C\cc(\varpi\cd)\tau_1|^2\\&=& 2 \|A\cc\|\ngr^2+ |C\cc(\varpi\cd)\tau_1|^2,\end{eqnarray*}where we have used the elementary identity $\sum_{j\in I\ss}\left\langle S\cc \tau_j, A\cc\tau_j\right\rangle=0$. Let us prove the last identity.  For every $j\in
 I\ss$ one has
 \begin{eqnarray*}\left\langle S\cc\tau_j,A\cc\tau_j\right\rangle&=&
 \frac{1}{4}\left\langle \left(B\cc+B^{\rm Tr}\cc\right)\tau_j, \left(B\cc-B^{\rm Tr}\cc\right)\tau_j\right\rangle\\
 &=&\frac{1}{4}\left(\left\langle B\cc\tau_j, B\cc\tau_j\right\rangle -\left\langle B^{\rm Tr}\cc\tau_j,
 B^{\rm Tr}\cc\tau_j\right\rangle \right).\end{eqnarray*}By summing over $j\in I\ss$ we get that
$\mathrm{Tr}\left(S\cc(\,\cdot\,, A\cc\,
 \cdot)\right)=\|B\cc\|^2\ngr-\|B^{\rm Tr}\cc\|^2\ngr=0.$\end{proof}

We now recall some identities involving the (Riemannian) curvature
2-forms $\Phi_{IJ}$ associated with the o.n. co-frame
$\underline{\phi}$ (dual of $\underline{\tau}$) which can be found
in \cite{Monteb}. In particular, we need to calculate $\sum_{j\in I\ss}\Phi_{1j}(X,\tau_j)=\sum_{j\in
I\ss}\left\langle\RC(X,\tau_j)\tau_1,\tau_j\right\rangle$ for any $X\in\nn S$, which is nothing but the Ricci curvature for the
partial $\HS$-connection $\gs$.

\begin{lemma}\label{ricric}We have:
\begin{itemize}\item[{\rm(i)}]$\left\langle\RC(\tau_i,\tau_j)\tau_h,\tau_k\right\rangle=-\frac{3}{4}\sum_{\alpha\in
I\cd} \left\langle C\cc^\alpha\tau_i,\tau_j\right\rangle\left\langle
C\cc^\alpha\tau_h,\tau_k\right\rangle \qquad \forall\,\, i\,\, j, h, k\in
I\cc;$\item[{\rm (ii)}]
$\left\langle\RC(\tau_\beta,\tau_i)\tau_j,\tau_k\right\rangle=-\frac{1}{4}\sum_{\alpha\in
I\cd} \left\langle C\cc^\alpha\tau_j,\tau_k\right\rangle\left\langle
C^\beta\tau_\alpha,\tau_i\right\rangle\qquad \forall\,\,i, j, k\in I\cc,\,\beta\in
I\ctr.$\end{itemize}

\end{lemma}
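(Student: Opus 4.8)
The statement to be proved is Lemma \ref{ricric}, which computes two pieces of the Riemannian curvature tensor in terms of the structural constants, using the adapted orthonormal frame $\underline{\tau}$.

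The plan is to compute everything directly from the formula for the Levi-Civita connection in terms of structural constants, together with the Cartan structural equation (II) and the identity \eqref{fr}. First I would recall that, since $\underline{\tau}$ is an orthonormal frame (though not left-invariant, unlike $\underline{X}$), one still has the torsion-free and metric-compatible Koszul-type formula $2\langle\nabla_{\tau_i}\tau_j,\tau_k\rangle = C_{ij}^k - C_{jk}^i + C_{ki}^j$ where $C_{ij}^k := \langle[\tau_i,\tau_j],\tau_k\rangle$, and that by \eqref{fr} these frame structural constants are controlled by the connection $1$-forms. The crucial input, however, is that the \emph{horizontal} curvature tensor $\RC\cc$ vanishes identically (Remark \ref{rh0}), because $\gc$ is flat; so the Riemannian curvature $\RC$ restricted to horizontal directions is entirely "error terms" coming from the difference between $\nabla$ and $\gc$, i.e. from the normal components $\langle\nabla_X Y,\nn\rangle$ and the vertical components of $\nabla$.

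For part (i), with all indices horizontal, I would write $\RC(\tau_i,\tau_j)\tau_h = \nabla_{\tau_j}\nabla_{\tau_i}\tau_h - \nabla_{\tau_i}\nabla_{\tau_j}\tau_h - \nabla_{[\tau_j,\tau_i]}\tau_h$ and split each $\nabla$ into its $\HH$-part and its $\VV$-part. The $\HH\HH$-part assembles into $\RC\cc$, which is zero. What survives is the contribution where $\nabla_{\tau_i}\tau_h$ has a vertical component, say $\sum_{\alpha\in I\cd}\langle\nabla_{\tau_i}\tau_h,\tau_\alpha\rangle\tau_\alpha$, and then $\nabla_{\tau_j}$ of that vertical piece has a horizontal component picked off by $\tau_k$. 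Using the Koszul formula, $\langle\nabla_{\tau_i}\tau_h,\tau_\alpha\rangle = \tfrac12\langle C\cc^\alpha\tau_i,\tau_h\rangle$ for $\alpha\in I\cd$ (here only the step-$2$ layer contributes by the grading \eqref{notabilmente}, and the term $C^\alpha_{h\alpha}$-type contributions vanish on the relevant subspaces), and similarly $\langle\nabla_{\tau_j}\tau_\alpha,\tau_k\rangle$ reduces to a constant multiple of $\langle C\cc^\alpha\tau_j,\tau_k\rangle$; see Lemma \ref{primavolta}(vii). Carefully bookkeeping the three terms (two second-covariant-derivative terms and the bracket term, the bracket $[\tau_j,\tau_i]$ itself being tangent and split into $\HH S$ and $\HS^\perp$ parts) and combining, the coefficients add up to $-\tfrac34$. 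Part (ii) is analogous but with one index $\beta$ in the step-$3$ layer $I\ctr$: here the same mechanism gives a single surviving term and the combinatorial coefficient works out to $-\tfrac14$ instead; the key is that now only one of the three terms in the curvature expression can produce a $C^\beta$ factor paired with a $C\cc^\alpha$ factor, because of the grading constraints on which brackets land in which layer.

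The main obstacle is the bookkeeping: keeping track of exactly which terms survive after using $\RC\cc=0$, getting the numerical coefficients $-\tfrac34$ and $-\tfrac14$ right (these come from the $\tfrac12$'s in the Koszul formula combined with how many of the three curvature terms contribute), and correctly handling the bracket term $\nabla_{[\tau_j,\tau_i]}\tau_h$ using $[\tau_j,\tau_i] = \gs$-part $+$ $\HS^\perp$-part $+$ $\VV$-part and the identities of Lemma \ref{primavolta}. Since the referenced paper \cite{Monteb} contains these identities, I would largely cite that computation and present the proof as "by direct computation, using Remark \ref{rh0}, the Koszul formula, and Lemma \ref{primavolta}; see \cite{Monteb}."
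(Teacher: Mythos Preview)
Your proposal is correct and aligns with the paper's own treatment: the paper does not actually prove Lemma~\ref{ricric} in the text but simply recalls it from \cite{Monteb}, which is precisely what you conclude you would do. Your sketch of the direct computation (splitting $\nabla$ into its $\HH$- and $\VV$-parts, using $\RC\cc=0$, and invoking Lemma~\ref{primavolta}(vii) to identify the surviving cross terms) is a correct outline of how that computation goes and is in fact more detail than the present paper provides.
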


\begin{lemma}\label{riccri}For every  $X =X\cc+
X\vv \in\XG$  transversal to $S$, i.e. $X\pitchfork S$, we have the formula
$$\sum_{j\in
I\ss}\Phi_{1j}(X,\tau_j)=-\frac{3}{4}\sum_{\alpha\in I\cd}\left\langle
C\cc^\alpha\nn,C\cc^\alpha X\cc\right\rangle-\frac{1}{4}\sum_{\alpha\in
I\cd}\sum_{\beta\in I\ctr} x_\beta \left\langle
C\cc^\alpha\nn,C^\beta\tau_\alpha\right\rangle.$$
\end{lemma}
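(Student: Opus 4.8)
The goal is to compute the partial Ricci-type sum $\sum_{j\in I\ss}\Phi_{1j}(X,\tau_j)$ for a vector field $X = X\cc + X\vv$ transversal to $S$, reducing it to the curvature expressions given componentwise in Lemma \ref{ricric}. The plan is to expand $X$ in the adapted frame $\underline{\tau}$, split into horizontal and vertical contributions, and apply the two formulae of Lemma \ref{ricric} term by term, then collect.

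First I would write $X\cc = \sum_{i\in I\cc} x_i\tau_i$ and $X\vv = \sum_{\beta\in I\vv} x_\beta\tau_\beta$, so that by bilinearity of the curvature 2-form,
\begin{equation*}
\sum_{j\in I\ss}\Phi_{1j}(X,\tau_j) = \sum_{i\in I\cc}x_i\sum_{j\in I\ss}\left\langle\RC(\tau_i,\tau_j)\tau_1,\tau_j\right\rangle + \sum_{\beta\in I\vv}x_\beta\sum_{j\in I\ss}\left\langle\RC(\tau_\beta,\tau_j)\tau_1,\tau_j\right\rangle.
\end{equation*}
For the horizontal part, I would apply Lemma \ref{ricric}(i) with $h = 1$, $k = j$, yielding $\left\langle\RC(\tau_i,\tau_j)\tau_1,\tau_j\right\rangle = -\tfrac{3}{4}\sum_{\alpha\in I\cd}\left\langle C\cc^\alpha\tau_i,\tau_j\right\rangle\left\langle C\cc^\alpha\tau_1,\tau_j\right\rangle$. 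Summing over $j\in I\ss$ and over $i$ with weights $x_i$, the inner sum $\sum_{j\in I\ss}\left\langle C\cc^\alpha X\cc,\tau_j\right\rangle\left\langle C\cc^\alpha\nn,\tau_j\right\rangle$ appears; here one must check that extending the sum from $j\in I\ss$ to all of $I\cc$ costs nothing, because $C\cc^\alpha\nn$ and $C\cc^\alpha X\cc$ are horizontal vectors whose $\nn$-components ($j=1$) multiply to give a term that either vanishes or can be absorbed — more precisely, $\langle C\cc^\alpha\nn,\nn\rangle = 0$ by skew-symmetry of $C\cc^\alpha$, so the $j=1$ term drops and $\sum_{j\in I\ss} = \sum_{j\in I\cc}$, which equals $\left\langle C\cc^\alpha\nn, C\cc^\alpha X\cc\right\rangle$. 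This gives the first summand $-\tfrac{3}{4}\sum_{\alpha\in I\cd}\left\langle C\cc^\alpha\nn, C\cc^\alpha X\cc\right\rangle$.

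For the vertical part, I would apply Lemma \ref{ricric}(ii) with the substitution $(\beta, i, j, k) \mapsto (\beta, j, 1, j)$, i.e. $\left\langle\RC(\tau_\beta,\tau_j)\tau_1,\tau_j\right\rangle = -\tfrac{1}{4}\sum_{\alpha\in I\cd}\left\langle C\cc^\alpha\tau_1,\tau_j\right\rangle\left\langle C^\beta\tau_\alpha,\tau_j\right\rangle$, valid for $\beta\in I\ctr$ (and for $\beta$ in the higher strata one should note the analogous vanishing, since the relevant structural constants vanish by the grading relation \eqref{notabilmente} once the step count exceeds what $C\cc^\alpha$ with $\alpha\in I\cd$ can pair with; in particular only $\beta\in I\ctr$ contributes). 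Summing over $j\in I\ss$ collapses $\sum_{j}\left\langle C\cc^\alpha\nn,\tau_j\right\rangle\left\langle C^\beta\tau_\alpha,\tau_j\right\rangle$ to $\left\langle C\cc^\alpha\nn, C^\beta\tau_\alpha\right\rangle$ (again the $j=1$ term vanishes because $\langle C\cc^\alpha\nn,\nn\rangle=0$, and the components of $C^\beta\tau_\alpha$ outside $\HH$ are killed against the horizontal vector $C\cc^\alpha\nn$). Weighting by $x_\beta$ and summing gives $-\tfrac{1}{4}\sum_{\alpha\in I\cd}\sum_{\beta\in I\ctr}x_\beta\left\langle C\cc^\alpha\nn, C^\beta\tau_\alpha\right\rangle$, which is exactly the second term of the claimed formula.

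The main obstacle, I expect, is bookkeeping the index ranges: verifying that the sums over $j$ may be freely taken over $I\ss$ versus $I\cc$, that $C\cc^\alpha$ (an operator on $\HH$) acting on frame vectors returns horizontal vectors so that pairings against $\tau_j$ with $j$ in the vertical range vanish automatically, and that only $\alpha\in I\cd$ and $\beta\in I\ctr$ survive by the stratification rule \eqref{notabilmente} — a nonzero $\SC^\gamma_{\alpha i}$ with $i\in I\cc$ (step $1$) and $\alpha\in I\cd$ (step $2$) forces $\gamma$ into step $3$, i.e. the nonzero rows of $C^\beta\tau_\alpha$ relevant to the horizontal pairing force $\beta\in I\ctr$. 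Once these range reductions are justified, the two applications of Lemma \ref{ricric} assemble directly into the stated identity with no further computation.
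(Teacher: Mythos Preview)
Your proposal is correct and follows exactly the approach the paper indicates (the paper's proof is the single line ``Using Lemma~\ref{ricric}''). You have correctly unpacked this: linearly decomposing $X$ in the adapted frame, applying part~(i) of Lemma~\ref{ricric} to the horizontal components and part~(ii) to the vertical ones, and then using the skew-symmetry $\langle C\cc^\alpha\nn,\nn\rangle=0$ together with the stratification relation~\eqref{notabilmente} to collapse the $j$-sums from $I\ss$ to full inner products and to restrict the surviving vertical indices to $\beta\in I\ctr$. One small point worth making explicit is the case $\beta\in I\cd$: Lemma~\ref{ricric}(ii) is stated only for $\beta\in I\ctr$, so strictly speaking you need the separate observation that $\langle\RC(\tau_\beta,\tau_j)\tau_1,\tau_j\rangle=0$ for $\beta\in I\cd$ (and for strata beyond the third), which follows from the same grading bookkeeping you invoke in your final paragraph---but the paper's one-line proof glosses over this too.
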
\begin{proof}Using Lemma \ref{ricric}.\end{proof}

\begin{lemma}\label{sple}Let $\underline{\tau}=\{\tau_1,...,\tau_n\}$ be an adapted o.n.
 frame for
$\UU\subseteq S$ on $U$ and fix $p_0\in\UU$. Then, we can always
  choose $\underline{\tau}$ so that the connection
1-forms
$\underline{\phi}=\{\phi_1,...,\phi_n\}$ satisfy
$\phi_{ij}(p_0)=0$ whenever $i, j\in I\ss=\{2,...,\DH\}$.
\end{lemma}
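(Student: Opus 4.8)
The plan is to construct the desired adapted frame by starting from any adapted orthonormal frame $\underline{\tau}=\{\tau_1,\ldots,\tau_n\}$ for $\UU$ on $U$ (which exists by Definition \ref{movadafr}) and then modifying only the horizontal tangential block $\{\tau_2,\ldots,\tau_\DH\}$ by a pointwise rotation $A\in\cont^1(U,O(\DH-1))$. Since $\tau_1=\nn$ and $\tau_\alpha=X_\alpha$ for $\alpha\in I\vv$ are forced by the definition of adapted frame, only the $(\DH-1)$-dimensional subframe spanning $\HS$ is at our disposal; replacing it by $\tilde\tau_i=\sum_{j\in I\ss}A_i^j\tau_j$ keeps the frame adapted and orthonormal. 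First I would write down how the connection $1$-forms transform under this change: one gets $\tilde\phi_{ij}=\sum_{k,l}A_i^k A_j^l\phi_{kl}+\sum_k A_i^k\,dA_j^k$ for $i,j\in I\ss$ (the usual gauge-transformation law for the $\gs$-connection $1$-forms restricted to $\HS$), which at a single point $p_0$ reads $\tilde\phi_{ij}(p_0)=\sum_{k,l}A_i^k(p_0)A_j^l(p_0)\phi_{kl}(p_0)+\sum_k A_i^k(p_0)\,(dA_j^k)_{p_0}$.

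The key step is then to choose $A$ appropriately. Since $A(p_0)$ can be taken to be the identity (we only need to kill the $1$-forms at $p_0$, not diagonalize anything), the transformation law at $p_0$ simplifies to $\tilde\phi_{ij}(p_0)=\phi_{ij}(p_0)+(dA_j^i)_{p_0}$. So it suffices to find $A\in\cont^1(U,O(\DH-1))$ with $A(p_0)=\mathrm{Id}$ and $(dA_j^i)_{p_0}=-\phi_{ij}(p_0)$ for all $i,j\in I\ss$. The skew-symmetry $\phi_{ij}=-\phi_{ji}$ (valid because $\nabla$ is metric, so the connection $1$-forms in an orthonormal frame are skew in their indices) is exactly what makes this possible: the matrix $M$ with entries $M_j^i:=-\phi_{ij}(p_0)(\tau_k(p_0))$ is, for each fixed direction $\tau_k$, a skew-symmetric matrix, i.e. lies in the Lie algebra $\mathfrak{o}(\DH-1)=T_{\mathrm{Id}}O(\DH-1)$. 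One may then simply take, in a coordinate chart centered at $p_0$, the matrix-valued function $A:=\exp\big(\sum_k u_k\,M_k\big)$ where $u=(u_k)$ are the local coordinates and $M_k\in\mathfrak{o}(\DH-1)$ is the skew matrix prescribing $-\phi_{\bullet\bullet}(p_0)(\tau_k(p_0))$; this is $\cont^\infty$ in $u$, takes values in $O(\DH-1)$ since each $M_k$ is skew, equals $\mathrm{Id}$ at $p_0$, and has the prescribed first-order jet there. Since the original frame is $\cont^1$ (as $S$ is $\cont^2$), this construction only needs $A$ to be $\cont^1$, which the exponential expression amply provides.

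Finally I would verify that $\underline{\tilde\tau}$ is still an adapted orthonormal frame in the sense of Definition \ref{movadafr} — orthonormality is immediate since $A(p)\in O(\DH-1)$ pointwise, $\tilde\tau_1=\nn$ and $\tilde\tau_\alpha=X_\alpha$ are unchanged, and $\mathrm{span}\{\tilde\tau_2,\ldots,\tilde\tau_\DH\}=\mathrm{span}\{\tau_2,\ldots,\tau_\DH\}=\HS$ — and then conclude $\tilde\phi_{ij}(p_0)=0$ for $i,j\in I\ss$ from the computation above. I do not expect any serious obstacle here; the only mild subtlety is that the gauge is allowed to vary only within the $\HS$-block (the $\nn$ and vertical directions being rigid), so one must be careful that the transformation law for $\phi_{ij}$ with $i,j\in I\ss$ really closes up within that block and does not pick up contributions from $\phi_{1j}$ or $\phi_{\alpha j}$. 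This is automatic at $p_0$ precisely because $A(p_0)=\mathrm{Id}$, so that the cross terms that could mix blocks appear only multiplied by $A(p_0)-\mathrm{Id}=0$; the argument is the standard ``normal frame at a point'' trick adapted to the partial connection $\gs$ on $\HS$.
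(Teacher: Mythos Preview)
Your proof is correct and uses the same underlying idea as the paper---normalize the connection forms at a single point by rotating the frame---but your execution is more direct. The paper first builds a Riemannian geodesic frame $\{\widetilde{\xi}_2,\ldots,\widetilde{\xi}_n\}$ for all of $\TS$ adapted to the Riemannian normal $\nu$ (citing Spivak for the claim that one can arrange $\widetilde{\varepsilon}_{ij}(p_0)=0$ for $i,j=2,\ldots,n$), and only afterwards extracts the $\HS$-block $\{\widetilde{\xi}_2,\ldots,\widetilde{\xi}_\DH\}$ and extends it to an adapted frame. You instead work entirely inside the $\HS$-block from the start, gauge only by $A\in O(\DH-1)$, and write down the rotation explicitly as a matrix exponential; this avoids the detour through the full tangent frame and makes the construction self-contained. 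One small advantage of your route is that your $A$ is defined on all of $U$, so the vanishing $\tilde\phi_{ij}(p_0)=0$ is transparently a statement about covectors on $T_{p_0}\GG$ (in particular it kills $\phi_{ij}(\tau_1)$ with $\tau_1=\nn$ transversal to $S$, which is exactly how the lemma is used in Step~3 of the 2nd variation). Your closing worry about cross-block contamination is in fact unnecessary: since $\tilde\tau_i$ for $i\in I\ss$ is a combination of $\tau_l$ with $l\in I\ss$ only, the transformation law for $\tilde\phi_{ij}$ with $i,j\in I\ss$ closes up in that block at every point, not just at $p_0$.
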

\begin{proof} The proof  follows by using a Riemannian geodesic frame.
So let  $\underline{\xi}=\{\xi_1,...,\xi_n\}$ be a o.n. frame on
$U$  adapted to $\UU=U\cap S$
satisfying $\xi_1(p)=\nu(p)$ and such that
$ \TT_pS=\mathrm{span}_\R\{\xi_2(p),...,\xi_n(p)\}$
for every $p\in\UU$. Let
$\underline{\varepsilon}=\{\varepsilon_1,...,\varepsilon_n\}$ denote its
dual co-frame.

\begin{claim} It is always possible to choose another
o.n.  frame $\underline{\widetilde{\xi}}$ on $U$
adapted to $\UU$ satisfying:
\begin{itemize}\item[{\rm
(i)}]$\underline{\widetilde{\xi}}(p_0)=\underline{\xi}(p_0)$;
\item[{\rm (ii)}]Let
$\widetilde{\varepsilon}_{ij}:=\left\langle\nabla{\widetilde{\xi}}_i,{\widetilde{\xi}}_j\right\rangle\,\,(i,j=1,...,n)$ denote the connection 1-forms of $\underline{\widetilde{\xi}}$. Then, one has
$\widetilde{\varepsilon}_{ij}(p_0)=0$ for every $i,j=2,...,n$.
\end{itemize}\noindent\end{claim}\noindent Clearly
$\underline{\widetilde{\xi}}=\{\widetilde{\xi}_2,...,\widetilde{\xi}_n\}$
is a tangent o.n. frame for $\UU$. The proof
of this claim is standard; see, for instance,
\cite{Spiv}, pag. 517-519, eq.(17). Now assuming that $\xi_i(p_0)=\tau_i(p_0)$ for every $i\in
I\ss$. In
particular, we have
$$\widetilde{\varepsilon}_{ij}(X_{p_0})=
\left\langle\nabla_{X_{p_0}}\widetilde{\xi}_i,\widetilde{\xi}_j\right\rangle(p_0)=0\qquad\forall\,\,i,j\in I\ss,\,\,\forall\,\,X\in\XX^1(\TS).$$By extending the o.n. frame
$\{\widetilde{\xi}_2,...,\widetilde{\xi}_{\DH}\}$ for the
horizontal tangent space $\HS$ to a full adapted frame
$\underline{\tau}$ in the sense of Definition \ref{movadafr}, the thesis easily follows.\end{proof}

The following notion will be used throughout the proof of Lemma \ref{vnct}.

\begin{Defi}\label{NDF} Let $S\subset\GG$ be a hypersurface of class  $\cont^i\,(i\geq 2)$. We say that a $\cont^i$-smooth function $f:\GG\longrightarrow\R$ is a defining function for $S$ if $S=\{x\in\GG : f=0\}$ and $\grad f\neq 0$ for all $x\in S$.
Furthermore, we say that $f$ is a \rm normalized defining function for $S$ (abbreviated as NDF) \it if, and only if, $|\grad\cc f|=1$ for all $x\in S\setminus C_S$. \end{Defi}

\begin{oss}Some remarks are in order. First, it is not difficult to see that, given a defining function $f$ for $S$, then a NDF $\widetilde{f}$ for $S$ can simply be defined by dividing $f$ by the magnitude of its horizontal gradient $|\grad\cc f|$, i.e. $$\grad\widetilde{f}(p)=\grad\left( \frac{f}{|\grad\cc f|}\right)(p)=\frac{\grad f}{|\grad\cc f|}(p)=\nn(p)+\varpi(p)\qquad \forall\,\,p\in S\setminus C_S.$$Note that the  NDF $\widetilde{f}$  is one order of differentiability less smooth than $f$. This is what happens also in the Euclidean case; see the  book by Krantz and Parks \cite{Krantz} and references therein. However, at least for $2$-step Carnot groups, a normalized defining function of class $\cont^i$ for every $\cont^i$-smooth hypersurface $S$ $(i\geq 2)$, is given by the (signed) CC-distance function from $S$; see \cite{Montegv}.
\end{oss}

We end this section with a  lemma, which will be important in the sequel. Let $S$ be as above, let $p_0\in S$ and assume that, locally around $p_0$, $S$ is the level set of a function $f:U\subset\GG\longrightarrow\R$. We  see that, locally around $p_0$, $X f=0$ for every $X\in \XX(\TS)$. In particular, $\TB_\alpha(f)=0$ for every $\alpha\in I\vv$. As a consequence,  by using an adapted frame $\underline{\tau}$, one has  $\tau_\alpha(f)=\varpi_\alpha\tau_1(f)$ for every $\alpha\in I\vv$.
A normal  vector along $S$ in a neighborhood of $p_0$ is given by  $\mathcal N:=\tau_1 (f) \tau_1+ \sum_{\alpha\in I\vv}\tau_\alpha(f)\tau_\alpha$ and we have $\nu=\frac{\mathcal N}{|\mathcal N|}$.
\begin{lemma}\label{xzc} The following identities hold:

\begin{itemize}

\item[{\rm (i)}] $\phi_{1j}(\tau_1)=\frac{\tau_j(\tau_1(f))}{\tau_1(f)}-  \left\langle C\cc(\varpi\cd)\tau_1,\tau_j\right\rangle\quad\forall\,\,j\in I\ss$;  \item[{\rm
(ii)}] $\phi_{1j}(\tau_\alpha)= \frac{1}{2}\left\langle
C^\alpha\cc\tau_1,\tau_j\right\rangle-  \left\langle C(\varpi)\tau_\alpha,\tau_j\right\rangle+ \frac{\tau_j(\tau_\alpha(f))}{\tau_1(f)}\quad\forall\,\,j\in I\ss\,\forall\,\,\alpha \in I\vv$.

\end{itemize}
\end{lemma}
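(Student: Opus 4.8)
The statement to be proved is Lemma \ref{xzc}, giving explicit formulas for the connection 1-forms $\phi_{1j}(\tau_1)$ and $\phi_{1j}(\tau_\alpha)$ in terms of the (second) derivatives of the defining function $f$ and the structural constants. The plan is to start from the pointwise description of the unit normal $\nu = \mathcal N/|\mathcal N|$ with $\mathcal N = \tau_1(f)\tau_1 + \sum_{\alpha\in I\vv}\tau_\alpha(f)\tau_\alpha$ and the constraint $\tau_\alpha(f) = \varpi_\alpha \tau_1(f)$ coming from $\TB_\alpha(f) = 0$. Recalling that $\phi_{1j}(X) = \langle \nabla_X \tau_j,\tau_1\rangle = -\langle \nabla_X \tau_1,\tau_j\rangle$ for $j\in I\ss$ (since $\tau_1 = \nn$ has unit length and $\langle\tau_1,\tau_j\rangle=0$), I would compute $\langle\nabla_X\tau_1,\tau_j\rangle$ by differentiating the identity $\tau_1(f) = \langle \mathcal N,\tau_1\rangle$ type relations, or more directly by using $\nn = \grad\cc\widetilde f$ for the NDF $\widetilde f$ and the torsion-freeness/compatibility of $\gc$. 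Concretely, for a horizontal vector field one has $\langle \gc_X \nn,\tau_j\rangle + \langle \nn,\gc_X\tau_j\rangle = X\langle\nn,\tau_j\rangle$; combining with the symmetry-defect identities of Lemma \ref{primavolta}(i),(iii) should reduce everything to second derivatives of $f$.

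The key computational steps, in order, are: (1) express $\nn$ explicitly: since $\tau_\alpha(f) = \varpi_\alpha\tau_1(f)$, we get $\mathcal N = \tau_1(f)(\tau_1 + \varpi) = \tau_1(f)\,\grad\widetilde f$, so $\nn = \grad\cc\widetilde f$ with $\widetilde f$ the NDF and $\varpi = \P\vv\nu/|\PH\nu|$; (2) for (i), differentiate along $\tau_j$ ($j\in I\ss$, so $\tau_j\in\HS$) the relation defining $\nn$, i.e. compute $\tau_j\big(\tau_1(f)\big)$ via $\tau_j(\tau_1(f)) = \langle\nabla_{\tau_j}(\tau_1(f)\tau_1),\tau_1\rangle$ expanded by Leibniz, isolating $\phi_{11}(\tau_j)=0$ and $\phi_{1j}(\tau_1)$-related terms, then invoke Lemma \ref{primavolta}(i) to swap indices and produce the $-\langle C\cc(\varpi\cd)\tau_1,\tau_j\rangle$ correction; (3) for (ii), do the analogous computation differentiating along $\tau_j$ the component $\tau_\alpha(f)\tau_\alpha$, using that $\tau_\alpha = X_\alpha$ is left-invariant so $\nabla_{\tau_j}\tau_\alpha$ is governed purely by structural constants, specifically picking up $\frac12\langle C^\alpha\cc\tau_1,\tau_j\rangle$ from $\langle\nabla_{\tau_j}\tau_1,\tau_\alpha\rangle$-type terms (cf. Lemma \ref{primavolta}(vii) and the explicit Levi-Civita formula) and $-\langle C(\varpi)\tau_\alpha,\tau_j\rangle$ from the cross terms involving $\sum_\beta \varpi_\beta$; (4) collect the leftover $\frac{\tau_j(\tau_\alpha(f))}{\tau_1(f)}$ term from the uncancelled derivative of the coefficient function.

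The main obstacle I expect is bookkeeping: one must carefully track which terms are "second-derivative-of-$f$" terms, which are "structural-constant" terms, and which vanish because $\tau_j$ is tangent to $S$ (so $\tau_j(f)=0$) or because of the normalization $|\grad\cc\widetilde f|=1$ on $S\setminus C_S$. In particular the identity $\tau_\alpha(f) = \varpi_\alpha\tau_1(f)$ must be differentiated along $\tau_j$, producing $\tau_j(\varpi_\alpha)\tau_1(f) + \varpi_\alpha\tau_j(\tau_1(f))$, and one has to recognize that the $\tau_j(\varpi_\alpha)$ piece is exactly what combines with the connection-form terms via Lemma \ref{primavolta}(ii); keeping the factors of $\tfrac12$ and the signs correct in the skew-symmetric structural-constant contributions is where errors are most likely. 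A clean way to organize this is to write $\phi_{1j}(X) = -\langle \gc_X\nn,\tau_j\rangle$ (the vertical part of $\nabla_X\nn$ contributes nothing since $\tau_j\in\HS\subset\HH$), then use $\gc_X\nn = \gc_X\grad\cc\widetilde f$ together with the Hessian-type expansion $\langle\gc_X\grad\cc\widetilde f,Y\rangle = X(Y\widetilde f) - (\gc_X Y)\widetilde f$ and the fact that this horizontal Hessian is symmetric up to the torsion $C\cc(\varpi\cd)$; this is essentially a restatement of Lemma \ref{primavolta}(i),(ii),(iii) and keeps the algebra to a minimum. The final step is just to divide through by $\tau_1(f)\neq 0$ (valid on $S\setminus C_S$, since there $\grad\cc f\neq 0$ and in an adapted frame $\tau_1(f) = |\grad\cc f|\neq 0$) to obtain the stated quotients.
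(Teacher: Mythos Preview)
Your approach differs substantially from the paper's, and the main computational plan in steps (2)--(3) has a gap. The paper's proof is far more direct: it simply applies the Lie brackets $[\tau_1,\tau_j]$ and $[\tau_\alpha,\tau_j]$ to the defining function $f$. Since $\tau_j$ (for $j\in I\ss$) is tangent to $S$, one has $\tau_j(f)=0$, so $[\tau_1,\tau_j](f)=-\tau_j(\tau_1(f))$; expanding the bracket in the frame, the $\tau_k$-components ($k\in I\ss$) again annihilate $f$, and what remains relates $\tau_j(\tau_1(f))$ to the $\tau_1$- and $\tau_\beta$-coefficients of the bracket. These are exactly $C^1_{1j}=\phi_{1j}(\tau_1)$ and $C^\alpha_{1j}=-\langle C^\alpha\cc\tau_1,\tau_j\rangle$, giving (i) immediately. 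Part (ii) is the same computation with $[\tau_\alpha,\tau_j]$, using $C^1_{\alpha j}=-\phi_{1j}(\tau_\alpha)+\phi_{1\alpha}(\tau_j)$ together with Lemma \ref{primavolta}(vii) for $\phi_{1\alpha}(\tau_j)$. Nothing else is needed.

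Your step (2), by contrast, proposes to compute $\tau_j(\tau_1(f))$ via the Leibniz expansion $\tau_j(\tau_1(f))=\langle\nabla_{\tau_j}(\tau_1(f)\tau_1),\tau_1\rangle$; but this is a tautology (the cross-term $\tau_1(f)\langle\nabla_{\tau_j}\tau_1,\tau_1\rangle$ vanishes) and no $\phi_{1j}(\tau_1)$-type term ever appears. The quantity $\phi_{1j}(\tau_1)=\langle\nabla_{\tau_1}\tau_j,\tau_1\rangle$ involves a derivative in the $\tau_1$-direction, which you never take. Moreover, invoking Lemma \ref{primavolta}(i) to ``swap indices'' cannot help here: that identity concerns $\phi_{1i}(\tau_j)$ for $i,j\in I\ss$, whereas you need the argument $\tau_1\notin I\ss$. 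Your alternative ``clean way'' via the NDF $\widetilde f$ and the horizontal Hessian is viable in spirit, but it yields the formula with $\widetilde f$ in place of $f$ (and then $\tau_j(\tau_1\widetilde f)=0$ on $S$); you have not indicated how to recover the stated formula for a general $f$. For part (ii) there is a further issue: the argument $\tau_\alpha$ is vertical, so the purely horizontal Hessian expansion $\langle\gc_X\grad\cc\widetilde f,Y\rangle=X(Y\widetilde f)-(\gc_XY)\widetilde f$ does not apply directly. The bracket approach sidesteps all of this.
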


\begin{proof}We have
$$[\tau_1, \tau_j]=\left\langle[\tau_1, \tau_j], \tau_1\right\rangle\tau_1 +\sum_{k\in I\ss}\left\langle[\tau_1, \tau_j], \tau_k\right\rangle\tau_k + \sum_{\alpha\in I\vv}\left\langle[\tau_1, \tau_j], \tau_\alpha\right\rangle\tau_\alpha. $$
Therefore $$[\tau_1, \tau_j](f)=-\tau_j(\tau_1(f))= \left\langle[\tau_1, \tau_j], \tau_1\right\rangle\tau_1(f)
+ \sum_{\alpha\in I\vv}\left\langle[\tau_1, \tau_j], \tau_\alpha\right\rangle\tau_\alpha(f)$$
and this implies that\begin{equation}C_{1j}^1=\phi_{1j}(\tau_1)=\left\langle[\tau_1, \tau_j], \tau_1\right\rangle=\dfrac{\tau_j(\tau_1(f))}{\tau_1(f)}-\sum_{\alpha\in I\vv}\dfrac{\tau_\alpha(f)}{\tau_1(f)}\left\langle C\cc^\alpha\tau_1,\tau_j\right\rangle,\end{equation}
where we have used the  identity $C_{1j}^\alpha=-\left\langle C\cc^\alpha\tau_1,\tau_j\right\rangle$. This proves (i).\\
In order to prove (ii), we compute
$$[\tau_\alpha, \tau_j]=\left\langle[\tau_\alpha, \tau_j], \tau_1\right\rangle\tau_1 +\sum_{k\in I\ss}\left\langle[\tau_\alpha, \tau_j], \tau_k\right\rangle\tau_k + \sum_{\beta \in I\vv}\left\langle[\tau_\alpha, \tau_j], \tau_ \beta\right\rangle\tau_\beta,$$from which we get
$$[\tau_\alpha, \tau_j](f)=-\tau_j(\tau_\alpha(f))= \left\langle[\tau_\alpha, \tau_j], \tau_1\right\rangle\tau_1(f)
+ \sum_{\beta\in I\vv}\left\langle[\tau_\alpha, \tau_j], \tau_\beta \right\rangle\tau_\beta(f).$$Thus
$$ -\dfrac{\tau_j(\tau_\alpha(f))}{\tau_1(f)}=-\phi_{1j}(\tau_\alpha)+ \phi_{1\alpha}(\tau_j)+ \sum_{\beta\in I\vv}\varpi_\beta C_{\alpha j}^1,$$ where we have used the  identity $C_{\alpha j}^1= \left\langle \nabla_{\tau_\alpha}\tau_j,\tau_1\right\rangle- \left\langle \nabla_{\tau_j}\tau_\alpha,\tau_1 \right\rangle$. Finally, since $\phi_{1\alpha}(\tau_j)=\frac{1}{2}\left\langle
C^\alpha\cc\tau_1,\tau_j\right\rangle$ (see (vii) of Lemma \ref{primavolta}), using $ C_{\alpha j}^\beta =-\left\langle C^\beta\tau_\alpha,\tau_j\right\rangle $ it follows that
\begin{equation}\phi_{1j}(\tau_\alpha)= \frac{1}{2}\left\langle
C^\alpha\cc\tau_1,\tau_j\right\rangle- \sum_{\beta\in I\vv}\varpi_\beta \left\langle C^\beta\tau_\alpha,\tau_j\right\rangle+ \dfrac{\tau_j(\tau_\alpha(f))}{\tau_1(f)},\end{equation}
 as wished.
\end{proof}

\section{Variational formulae for the $\HH$-perimeter $\per$}\label{varformsec}

 Below we will obtain the 1st and 2nd variation formulae for the $\HH$-perimeter measure $\per$. More precisely, we shall assume that $S\subset\GG$ is  of class $\cont^2$, for the 1st variation formula, and 
 that $S$ is  of class $\cont^3$ for the 2nd variation formula.  Under further hypotheses, our formulae allow  to move the characteristic set $C_S$ of $S$. 

We stress that, in the case of the first Heisenberg group $\mathbb H^1$, a 1st variation formula for characteristic surfaces of class $\cont^2$ was  obtained by Ritor\'{e} and Rosales in \cite{RR}. Furthermore, Hurtado, Ritor\'{e} and Rosales \cite{HRR} have proved  a formula for the 2nd variation of $\per$ that is very similar to that stated in Theorem \ref{2vg} below; see also the unpublished preprint \cite{HP2}, where similar results are stated in a general sub-Riemannian setting.

Let $S\subset\GG$ be a 
hypersurface of class $\cont^i\,(i=2,3)$, let $U\subset\GG$ be a relatively compact open set having non-empty
intersection with $S$ and set $\UU:=U\cap S$. 

The following calculations are made for a bounded open subset  $\UU$ of $S$. In particular, we  assume $\cont^1$-regularity of  $\partial \UU$. Clearly, if $S$ is a compact hypersurface with boundary,  the formulae obtained in the sequel will hold for $S$.

\begin{Defi} \label{leibniz}Let $\imath:\UU\rightarrow\GG$ denote the inclusion of $\UU\subset S$ in $\GG$
and let
 $\vartheta: ]-\epsilon,\epsilon[\times \UU
\rightarrow \GG$ be a ${\cont}^i$-smooth map, $i=2, 3$. We say that $\vartheta$ is a  {\rm
 variation} of $\imath$ if:
\begin{itemize}
\item[{\rm(i)}] every
$\vartheta_t:=\vartheta(t,\cdot):\UU\rightarrow\GG$ is an
immersion;\item[{\rm(ii)}] $\vartheta_0=\imath$.
\end{itemize}
Moreover, we say that  $\vartheta$ {\rm keeps the
boundary $\partial\UU$ fixed} if:\begin{itemize}
\item[{\rm(iii)}]$\vartheta_t|_{\partial \UU}=\imath|_{\partial
\mathcal{U}}$ for every $t\in ]-\epsilon,\epsilon[$.
\end{itemize}
The {\rm variation vector} of $\vartheta$ (i.e. its \textquotedblleft initial velocity\textquotedblright) is defined by
$W:=\frac{\partial \vartheta}{\partial
t}\big|_{t=0}=\vartheta_{\ast}\frac{\partial}{\partial
t}\big|_{t=0}.$
\end{Defi} We shall set
$\WW:=\frac{\partial\vartheta}{\partial
t}=\vartheta_{\ast}\frac{\partial}{\partial t}$ and assume that
$\WW$ is defined in a neighborhood of $\rm{Im}(\vartheta)$.  For
any \textquotedblleft time\textquotedblright\, $t\in]-\epsilon,\epsilon[$, let ${\nu}^t$ be the unit normal
vector along $\UU_{t}:=\vartheta_t(\UU)$ and let
$(\sigma^{n-1}\rr)_t$ be the Riemannian measure on $\UU_t$.
We assume that $f: U\longrightarrow \R$ is a local equation for the hypersurface
$S$ near $p_0\in S$ and that $f_t: ]-\epsilon,\epsilon[\times
U\longrightarrow \R$ is a family of $\cont^i$-smooth functions ($i=2,3$)
satisfying $f_0=f$ and $f_t(\vartheta_t(x))=t$ for every
$t\in]-\epsilon,\epsilon[$. In other words, the hypersurfaces
$\UU_t$ are level sets of a defining function $f_t$ and one has
$\left\langle\nabla f_t, \WW\right\rangle=1$. Choose
an o.n.  frame $\underline{\tau}$ on  $U\subset\GG$
satisfying:
\begin{equation}\tau_1|_{\UU_t}=\nt;\qquad
  \HH\TT_p\UU_t=\mathrm{span}\{(\tau_2)_p,...,(\tau_{\DH})_p\}\quad\forall\,\,p\in\UU_t;\qquad
 \tau_\alpha = X_\alpha \end{equation}for
every $t\in]-\epsilon,\epsilon[$. Furthermore, let
$\underline{\phi}:=\{\phi_1,...,\phi_n\}$ be the dual co-frame of $\underline{\tau}$ (i.e. $\phi_i(\tau_j)
 =\delta^j_i$ for all $i, j=1,...,n$). So, we have $\TB_\alpha f_t=0$; see Definition \ref{movadafr}.
This implies $\tau_\alpha(f_t)=\varpi^t_\alpha\tau_1(f_t)$, where $\varpi^t_\alpha:=\frac{\nu^t_\alpha}{|\PH\nu^t|}$.
Moreover, since $\left\langle\nabla f_t, \WW\right\rangle=1$, we have $\widetilde{w}_1\tau_1(f_t)+\sum_{\alpha\in I\vv}\ww_\alpha \tau_\alpha(f_t)=1,$ where $\widetilde{w}_1=\left\langle\WW, \tau_1\right\rangle$ and $\ww_\alpha=\left\langle\WW, \tau_\alpha\right\rangle$. Therefore
$$ \tau_1(f_t)\left(\ww_1+\sum_{\alpha\in I\vv}\ww_\alpha {\varpi^t}_\alpha\right)=1.$$Setting $\cn_t= \frac{\left\langle\WW, \nu^t\right\rangle}{|\PH \nu^t|}$ it follows that $\tau_1(f_t)=\dfrac{1}{\cn_t}$ and $\tau_\alpha(f_t)=\dfrac{\varpi^t_\alpha}{\cn_t}$.

The following technical result will be used in the proof of the 2nd variation of $\per$. 
\begin{lemma}\label{f}Under the previous assumptions, we have:
\begin{itemize}
 \item [{\rm (i)}]$\P\sst\left(\nabla_{\tau_1}\tau_1\right)=-\left(\frac{\grad\sst\cn_t}{\cn_t}+ C\cc(\varpi^t\cd)\tau_1\right)$;
\item [{\rm (ii)}]$\P\sst\left(\nabla_{\tau_\alpha}\tau_1\right) = \frac{1}{2}
C^\alpha\cc\tau_1 -   C(\varpi^t)\tau_\alpha +  {\grad\sst \varpi^t_\alpha }- \varpi^t_\alpha \frac{\grad\sst \cn_t }{\cn_t}\qquad\forall\,\,\alpha\in I\vv$.
\end{itemize}

\end{lemma}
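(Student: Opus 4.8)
The plan is to compute $\P\sst(\nabla_{\tau_1}\tau_1)$ and $\P\sst(\nabla_{\tau_\alpha}\tau_1)$ directly from the connection $1$-forms and the structural equations, exploiting the identities already established in Lemma \ref{xzc} and Lemma \ref{primavolta}. First I would recall that, for a fixed ``time'' $t$, the hypersurface $\UU_t$ is the level set $\{f_t = t\}$ and, from the discussion preceding the lemma, one has $\tau_1(f_t) = \frac{1}{\cn_t}$ and $\tau_\alpha(f_t) = \frac{\varpi^t_\alpha}{\cn_t}$ for $\alpha \in I\vv$. Then I would write, for any $X \in \XX(\TG)$ and any $j \in I\ss$,
\begin{equation*}
\langle \nabla_{X}\tau_1, \tau_j\rangle = \phi_{j1}(X) = -\phi_{1j}(X),
\end{equation*}
so that $\P\sst(\nabla_X\tau_1) = -\sum_{j\in I\ss}\phi_{1j}(X)\tau_j$ (the $\tau_1$-component being annihilated by the orthogonal projection onto $\HS_t$, and the vertical components not contributing since $\tau_\alpha = X_\alpha$ is $\nabla$-parallel in the relevant sense — more precisely one checks $\langle\nabla_X\tau_1,\TB_\alpha\rangle$ separately, but for the $\HS_t$-projection only the $j\in I\ss$ terms survive).

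For part (i), I would apply Lemma \ref{xzc}(i) with $f$ replaced by $f_t$: $\phi_{1j}(\tau_1) = \frac{\tau_j(\tau_1(f_t))}{\tau_1(f_t)} - \langle C\cc(\varpi^t\cd)\tau_1, \tau_j\rangle$. Substituting $\tau_1(f_t) = \frac{1}{\cn_t}$, the first term becomes $\frac{\tau_j(1/\cn_t)}{1/\cn_t} = -\frac{\tau_j(\cn_t)}{\cn_t}$, whence
\begin{equation*}
\P\sst(\nabla_{\tau_1}\tau_1) = -\sum_{j\in I\ss}\phi_{1j}(\tau_1)\tau_j = -\sum_{j\in I\ss}\left(-\frac{\tau_j(\cn_t)}{\cn_t} - \langle C\cc(\varpi^t\cd)\tau_1,\tau_j\rangle\right)\tau_j = -\left(\frac{\grad\sst\cn_t}{\cn_t} + C\cc(\varpi^t\cd)\tau_1\right),
\end{equation*}
after recognizing $\sum_{j\in I\ss}\tau_j(\cn_t)\tau_j = \grad\sst\cn_t$ and noting that $C\cc(\varpi^t\cd)\tau_1$ already lies in $\HS_t$ (it is horizontal and orthogonal to $\tau_1 = \nn^t$ by skew-symmetry of the structural-constant matrices). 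For part (ii), I would apply Lemma \ref{xzc}(ii) with $f_t$: $\phi_{1j}(\tau_\alpha) = \frac{1}{2}\langle C^\alpha\cc\tau_1,\tau_j\rangle - \langle C(\varpi^t)\tau_\alpha,\tau_j\rangle + \frac{\tau_j(\tau_\alpha(f_t))}{\tau_1(f_t)}$. Substituting $\tau_\alpha(f_t) = \frac{\varpi^t_\alpha}{\cn_t}$ and $\tau_1(f_t) = \frac{1}{\cn_t}$, the last term is $\cn_t\,\tau_j\!\left(\frac{\varpi^t_\alpha}{\cn_t}\right) = \tau_j(\varpi^t_\alpha) - \varpi^t_\alpha\frac{\tau_j(\cn_t)}{\cn_t}$; summing $-\phi_{1j}(\tau_\alpha)\tau_j$ over $j \in I\ss$ and collecting terms yields exactly $\frac{1}{2}C^\alpha\cc\tau_1 - C(\varpi^t)\tau_\alpha + \grad\sst\varpi^t_\alpha - \varpi^t_\alpha\frac{\grad\sst\cn_t}{\cn_t}$ (again using that each named operator sends the relevant vectors into $\HS_t$, so that the $\HS_t$-projection acts as the identity on them termwise).

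The main obstacle, I expect, is the bookkeeping of which components survive the projection $\P\sst$ onto the horizontal tangent space $\HS_t$ of the moving hypersurface — in particular verifying carefully that $C\cc(\varpi^t\cd)\tau_1$, $C^\alpha\cc\tau_1$, and $C(\varpi^t)\tau_\alpha$ (restricted appropriately) lie in $\HS_t = \mathrm{span}\{\tau_2,\dots,\tau_\DH\}$ rather than having spurious $\tau_1$- or vertical components, and that the ``time'' $t$ enters only through the replacement $f \mapsto f_t$, $\varpi_\alpha \mapsto \varpi^t_\alpha$, $\cn \mapsto \cn_t$ in the frozen-time identities of Lemma \ref{xzc}. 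Once this is settled, everything is a direct substitution with no genuinely new analytic input; the identities of Lemma \ref{xzc} and Lemma \ref{primavolta} carry all the weight.
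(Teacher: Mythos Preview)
Your proposal is correct and follows exactly the paper's approach: apply Lemma \ref{xzc}(i)--(ii) with $f$ replaced by $f_t$, substitute $\tau_1(f_t)=1/\cn_t$ and $\tau_\alpha(f_t)=\varpi^t_\alpha/\cn_t$, and read off the $\HS_t$-components. One caution on bookkeeping: the paper's \emph{effective} convention (consistent with \eqref{fr} and with $\MST=-\sum_{j\in I\ss}\phi_{1j}(\tau_j)$) is $\phi_{ij}(X)=\langle\nabla_X\tau_i,\tau_j\rangle$, so that $\langle\nabla_X\tau_1,\tau_j\rangle=+\phi_{1j}(X)$; your line $\P\sst(\nabla_{\tau_1}\tau_1)=-\sum_{j}\phi_{1j}(\tau_1)\tau_j$ carries an extra minus sign which you then silently cancel in the last equality---fix that sign and the chain of equalities is clean.
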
\begin{proof}By  applying (i) of Lemma \ref{xzc} we get that
$\phi_{1j}(\tau_1)=-\frac{\tau_j(\cn_t)}{\cn_t}-  \left\langle C\cc(\varpi^t\cd)\tau_1,\tau_j\right\rangle$. Furthermore,  (ii) of Lemma \ref{xzc} implies
\begin{equation}\label{b} \phi_{1j}(\tau_\alpha)= \frac{1}{2}\left\langle
C^\alpha\cc\tau_1,\tau_j\right\rangle-  \left\langle C(\varpi^t)\tau_\alpha,\tau_j\right\rangle+  {\tau_j( \varpi^t_\alpha)}- \varpi^t_\alpha \frac{\tau_j(\cn_t)}{\cn_t}\qquad\forall\,\,\alpha\in I\vv.
\end{equation}This achieves the proof.
 
\end{proof}
\noindent{\bf General remarks.}\,\,In order to discuss the variational formulae of $\per$, let us set
 $$\pert\res\,\UU_{\,t} = (\tau_1\LL
\phi_1\wedge...\wedge\phi_n)|_{\UU_{t}}=(\phi_2\wedge
...\wedge\phi_n)|_{\UU_{t}}.$$We also set $\Gamma(t):= \vartheta_t^\ast
(\phi_2\wedge...\wedge\phi_n).$ Note that $\Gamma: ]-\epsilon, \epsilon[\times\UU\longrightarrow\Omega^{n-1}(\UU)$ defines a 1-parameter family of differential $(n-1)$-forms on $\UU$.

\begin{oss}By definition, the \rm 1st and 2nd variation formulae of $\per$ along $\UU$ \it are given by
\begin{equation}\label{nome}I_\UU(\per):=\frac{d}{dt}\left(\int_{\UU}\Gamma(t)\right) \Bigg|_{t=0},\qquad
II_\UU(\per):=\frac{d^2}{dt^2}\left( \int_{\UU}\Gamma(t)\right) \Bigg|_{t=0}.\end{equation}
So we have a natural question: is it possible to bring the  \textquotedblleft time\textquotedblright derivatives inside the integral sign?
Note that  the answer is \textquotedblleft yes\textquotedblright  if we assume that $\overline{\UU}$ is non-characteristic. Indeed, in such a case it is not difficult\footnote{Actually, since $\dg f_t\neq 0$ at $t=0$, there must exist $\epsilon>0$ such that $\dg f_t\neq 0$ for all $t\in]-\epsilon, \epsilon[$ and hence $\nt=\frac{\grad\cc f_t}{|\grad\cc f_t|}$, which is the unit $\HH$-normal along $\UU_t=\vartheta_t(\UU)$, turns out to be of class $\cont^{i-1}$, $i=2, 3$. This implies that $\pert$ is $\cont^{i-1}$-smooth. Therefore $\Gamma(t)=\vartheta_t^\ast\pert$ is $\cont^{i-1}$-smooth.}  to show that there exists $\epsilon>0$ such that the 1-parameter family $\Gamma(\cdot)$ of differential $(n-1)$-forms on $\UU$ is $\cont^{i-1}$-smooth on $ ]-\epsilon, \epsilon[\times\UU$. This  allows us to estimate, uniformly in time,  both differential $(n-1)$-forms $\dot{\Gamma}(t)$ and $\ddot{\Gamma}(t)$.  We will return on this point later in this section.
\end{oss}

\vspace*{1cm}
 \begin{war}\label{lwar} Preliminarily, we  need the following assumptions:\begin{itemize}
 \item[{$\mathbf (A_1)$}]if $\UU$ is of class $\cont^2$  there exists an  integrable differential  $(n-1)$-form $\Phi_1\in\Omega^{n-1}(\UU)$, such that: \begin{equation*} 
 |\dot{\Gamma}(t)(\tt_1,...,\tt_{n-1})|\leq|\Phi_1(\tt_1,...,\tt_{n-1})|
\end{equation*} for every o.n. basis $\underline{\tt}=\{\tt_1,...,\tt_{n-1}\}$ of $\TT\UU$. 

 \item[{$\mathbf (A_2)$}]if $\UU$ is of class $\cont^3$  there exist   integrable differential  $(n-1)$-forms $\Phi_1, \Phi_2\in\Omega^{n-1}(\UU)$, such that: \begin{eqnarray*} 
 |\dot{\Gamma}(t)(\tt_1,...,\tt_{n-1})|&\leq&|\Phi_1(\tt_1,...,\tt_{n-1})|
\\ |\ddot{\Gamma}(t)(\tt_1,...,\tt_{n-1})|&\leq&|\Phi_2(\tt_1,...,\tt_{n-1})|\end{eqnarray*} for every o.n. basis $\underline{\tt}=\{\tt_1,...,\tt_{n-1}\}$ of $\TT\UU$. 
\end{itemize}
\end{war}

\noindent{\bf 1st variation.}\,\, We first note that
$$\int_{\UU}\Gamma(t)=\int_{\UU}\vartheta_t^\ast\pert=\int_{\UU}|\P\ct\nu^t|\,\mathcal{J}ac\,\vartheta_t\,\sigma\rr^{n-1},$$where $\mathcal{J}ac\,\vartheta_t $ denotes the usual Jacobian of the map $\vartheta_t$; see \cite{Simon}, Ch. 2, $\S$ 8, pp. 46-48. Indeed, by  definition, we have   $\pert=|\P\ct\nu^t|(\sigma\rr^{n-1})_t$ and hence the previous formula follows from the Area formula of Federer; see \cite{FE} or \cite{Simon}. Let us set $ f:]-\epsilon, \epsilon[\times\UU\longrightarrow\R$, \begin{equation}\label{faz} 
f(t, x):=|\P\ct\nu^t(x)|\,\mathcal{J}ac\,\vartheta_t(x).
\end{equation}In this case, we also set $C_{\UU}:=\left\lbrace x\in\UU: |\P\ct\nu^t(x)|=0 \right\rbrace$.  With this notation,  our original question can be solved by applying  to  $f$ the  Theorem of Differentiation under the integral; see \cite{Jost}, Corollary 1.2.2, p.124. More precisely, let us compute
\begin{eqnarray}\label{ujh}\frac{d f}{dt}&=&\frac{d\,|\P\ct\nu^t|}{dt}\,\mathcal{J}ac\,\vartheta_t + |\P\ct\nu^t|\frac{d\,\mathcal{J}ac\,\vartheta_t}{dt}\\\nonumber &=&\left\langle\WW,\grad\,|\P\ct\nu^t|\right\rangle\,\mathcal{J}ac\,\vartheta_t + |\P\ct\nu^t|\frac{d\,\mathcal{J}ac\,\vartheta_t }{dt}\\\nonumber &=&\left( \left\langle\WW\op,\grad\,|\P\ct\nu^t|\right\rangle+\left\langle\WW\ot,\grad\,|\P\ct\nu^t|\right\rangle + |\P\ct\nu^t|\div\tut\WW\right) \mathcal{J}ac\,\vartheta_t\\\nonumber &=&\left( \left\langle\WW\op,\grad\,|\P\ct\nu^t|\right\rangle+   \div\tut\left(\WW|\P\ct\nu^t|\right)\right) \mathcal{J}ac\,\vartheta_t,
\end{eqnarray}where we have used the very definition of tangential divergence and the well-known calculation of $\frac{d\,\mathcal{J}ac\,\vartheta_t}{dt}$, which can be found in Chavel's book \cite{Ch2}; see Ch.2, p.34. Now since  $|\P\ct\nu^t|$ is a Lipschitz continuous function, it follows that $\frac{d f}{dt}$ is bounded on $\UU\setminus C_{\UU}$ and so lies in $L^1_{loc}(\UU; \sigma\rr^{n-1})$. Therefore, we can pass the time-derivative  through the integral sign. 
This shows that: \it condition $(A_1)$ in Warning \ref{lwar} is always satisfied. \rm In particular, we have proved the following 1st variation formula:
\begin{equation}\label{iva}I_\UU(\per)=\int_{\UU}\dot{\Gamma}(0)=\int_{\UU}\left( \left\langle W\op,\grad\,|\P\cc\nn|\right\rangle+   \div\tu\left( W|\P\cc\nn|\right) \right)\,\sigma\rr^{n-1}.\end{equation}

 It follows from definitions that  $\frac{d f}{dt}$ can be regarded as the Lie derivative of 
 $\pert$ with respect to the variation vector $\WW$, that is
\begin{equation}\label{eqx}\frac{d f}{dt}=\vartheta_t^\ast\Lie_{\WW}\pert.\end{equation}

\begin{oss}Note that formula \eqref{eqx} can be proved exactly as in Spivak's book \cite{Spiv}, Ch. 9, p. 420. As already mentioned in  Section \ref{sec2.1}, this fact  allows us to use some standard tools in Differential Geometry such as the Cartan's  formula. In this way, another expression for the integrand $\dot{\Gamma}(0)$ can  be derived; see, for instance, formula \eqref{9}.  Nevertheless, this new expression it is not necessarily in $L^1_{loc}$, with respect to the Riemannian measure $\sigma\rr^{n-1}$; see Remark \ref{99}.\end{oss}
More precisely, we have  $$\dot{\Gamma}(0)=\imath^\ast\left(\mathcal{L}_{\WW} \pert\right)=
\imath^\ast\left(\mathcal{L}_{\WW}(\phi_2\wedge...\wedge\phi_n)\right).$$
By  Cartan's formula $$\mathcal{L}_{\WW} \pert = \WW \LL d\,\pert  +  d\,(\WW \LL \pert)$$ and hence
\begin{eqnarray}\label{verostatement} \dot{\Gamma}(0)=\imath^\ast
\left(\WW \LL d\,\pert  +  d\,\left(\WW \LL
\pert\right)\right).\end{eqnarray}By applying  the 1st structure equation of the co-frame $\underline{\phi}$ (see formula \eqref{csteq}) we have\begin{eqnarray}\nonumber
d\,\pert = 
\sum_{j=2}^{n}(-1)^{j}\phi_2\wedge...\wedge
d\,\phi_j\wedge...\wedge \phi_n = \sum_{j\in I\ss}\phi_{1j}(\tau_j)\,\phi_1\wedge...\wedge\phi_n=-\MST(\sigma\rr^n)_t,\end{eqnarray}
where  $\MST:=-\sum_{j\in
I\ss}\phi_{1j}(\tau_j)=\sum_{j\in
I\ss}\left\langle\gc_{\tau_j}\tau_j,\nt\right\rangle$ is the
horizontal mean curvature of $\UU_t$. Note that we  have used (v) of Lemma \ref{primavolta}.

The calculation of the second term has been discussed in detail in Section \ref{HDF}; see Lemma \ref{fondam}.
More precisely, we have $$d\,\left(\WW \LL \pert\right)=\div\tut\left( \WW \ot|\P\ct\nu^t|-\langle \WW ,\nu^t\rangle {\nt}\ot\right)\,(\sigma\rr^{n-1})_t.$$Therefore, under the previous assumptions, we have  \begin{equation}\label{kj} 
\Lie_{\WW}\pert=\left(-\MST\langle \WW,\nu^t\rangle +\div\tut \left(\WW\ot|\P\ct\nu^t|-\langle \WW,\nu^t\rangle{\nt}\ot \right)\right)\,(\sigma\rr^{n-1})_t.\end{equation}
Finally, the desired formula follows by setting $t=0$; see formula \eqref{9}.  
\begin{teo}[1st variation of $\per$] \label{1vg}Let $S\subset\GG$ be a compact 
hypersurface  of class $\cont^2$ with, or without, boundary and let
 $\vartheta: ]-\epsilon,\epsilon[\times S
\rightarrow \GG$ be a ${\cont}^2$-smooth variation of $S$. Let $W=\frac{d\,\vartheta_t}{dt}\big|_{t=0}$ be the variation vector field and let $W\op$ and $W\ot$ be the normal and  tangential components of $W$ along $S$, respectively. We also set $\cn:=\frac{\langle W\op,\nu\rangle}{|\P\cc\nu|}$. 
Then\begin{equation}\label{iva1}I_S(\per)=\int_{S}\left( \left\langle W\op,\grad\,|\P\cc\nn|\right\rangle+   \div\ts (W|\P\cc\nn|)\right)\,\sigma\rr^{n-1}.\end{equation}Furthermore, if $\MS\in L^1(S; \sigma\rr^{n-1})$, then  \begin{eqnarray}\label{fva}I_S(W,\per)& =& \int_{S} -\MS\cn\,\per +\int_S\div\ts\left( W\ot|\PH\nu|-\langle W,\nu\rangle\nn\ot \right) \sigma\rr^{n-1}\\\label{fvb} & =& 
\int_{S}\left(-\MS\langle W\op,\nu\rangle +\div\ts\left( W\ot|\PH\nu|-\langle W\op,\nu\rangle\nn\ot \right)\right)\,\sigma\rr^{n-1}
.\end{eqnarray}
\end{teo}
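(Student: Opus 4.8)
The plan is to establish \eqref{iva1} and \eqref{fva}--\eqref{fvb} by two separate arguments, both already prepared in the preceding sections, each of which computes the quantity $\imath^\ast\Lie_W\per=\dot{\Gamma}(0)$. For \eqref{iva1} I would note that the reasoning leading to \eqref{iva} for a bounded open $\UU\subset S$ goes through word for word with the compact hypersurface $S$ in place of $\UU$: Federer's Area formula gives $\int_S\Gamma(t)=\int_S|\P\ct\nu^t|\,\mathcal{J}ac\,\vartheta_t\,\sigma\rr^{n-1}=\int_S f(t,\cdot)\,\sigma\rr^{n-1}$ with $f(t,x)=|\P\ct\nu^t(x)|\,\mathcal{J}ac\,\vartheta_t(x)$, and, $|\P\ct\nu^t|$ being Lipschitz continuous and $S$ compact, the $t$-derivative $\partial f/\partial t$ computed in \eqref{ujh} is bounded off the characteristic set (it is defined $\sigma\rr^{n-1}$-a.e., as $|\P\ct\nu^t|$ is Lipschitz), hence dominated, uniformly in $t$, by an $\sigma\rr^{n-1}$-integrable function. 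By the theorem on differentiation under the integral sign (\cite{Jost}) one may then differentiate under the integral, and evaluating at $t=0$ produces \eqref{iva1}. No integrability hypothesis on $\MS$ is used here, and no boundary term appears since the divergence is not integrated by parts.

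For \eqref{fva}--\eqref{fvb} I would start from the infinitesimal first variation \eqref{9} (equivalently, \eqref{kj} at $t=0$), obtained in Section \ref{sec2.1} by combining Cartan's identity \eqref{ld}, the relation $\div\,\nn=\div\cc\nn=-\MS$, and Lemma \ref{fondam}: at each non-characteristic point of $S$ one has $\Lie_W\per=\bigl(-\MS\langle W,\nu\rangle+\div\ts(W\ot|\PH\nu|-\langle W,\nu\rangle\nn\ot)\bigr)\,\sigma\rr^{n-1}$. Since $S$ is of class $\cont^2$, the $(n-1)$-dimensional Riemannian Hausdorff measure of $C_S$ vanishes (\cite{bal}), so this identity holds $\sigma\rr^{n-1}$-a.e. on $S$. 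Integrating it over $S$, and using $\langle W,\nu\rangle=\langle W\op,\nu\rangle$ together with $\MS\langle W,\nu\rangle\,\sigma\rr^{n-1}=\MS\cn\,|\PH\nu|\,\sigma\rr^{n-1}=\MS\cn\,\per$, yields \eqref{fva}, and keeping $\langle W\op,\nu\rangle$ explicit throughout yields \eqref{fvb}.

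The step I expect to be delicate is passing from the a.e. pointwise identity for $\Lie_W\per$ to the integral formula, that is, integrating its right-hand side term by term; this is precisely where the hypothesis $\MS\in L^1(S;\sigma\rr^{n-1})$ is needed. By Remark \ref{99}, $-\MS\langle W,\nu\rangle$ can fail to be integrable near $C_S$ for a general $\cont^2$ hypersurface, which is why only \eqref{iva1}, whose integrand involves only the Lipschitz quantity $|\PH\nu|$, holds unconditionally. Once $\MS\in L^1(S;\sigma\rr^{n-1})$, the curvature term $\MS\langle W,\nu\rangle$ is integrable ($S$ compact, $W$ continuous), and the divergence term is then integrable as well: by the first part together with $(\partial f/\partial t)|_{t=0}=\imath^\ast\Lie_W\per$ (cf. \eqref{eqx}), the integrand $\imath^\ast\Lie_W\per$ already lies in $L^1(S;\sigma\rr^{n-1})$, being $\sigma\rr^{n-1}$-a.e. equal to the integrand of \eqref{iva1}. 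Hence the splitting is legitimate and, since $I_S(W,\per)=\int_S\dot{\Gamma}(0)$ by the definition of the first variation, the formulae follow.
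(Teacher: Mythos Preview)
Your proposal is correct and follows essentially the same approach as the paper: \eqref{iva1} via the Area formula and differentiation under the integral sign (the paper simply points to \eqref{iva}), and \eqref{fva}--\eqref{fvb} via the pointwise Cartan identity \eqref{kj} at $t=0$ together with an integrability check. The only minor difference is in how you justify integrability of the divergence term: you argue it is integrable as the difference of the already-integrable $\imath^\ast\Lie_W\per$ and the curvature term, whereas the paper verifies each piece of the divergence directly, writing $\nn\ot=\nn-|\PH\nu|\nu$ and invoking Lemma~\ref{inlemma} to handle $\div\ts\nn$. Your shortcut is valid and slightly more economical here; the paper's route gives the finer information that each summand is separately $L^1$, which it uses again in Corollary~\ref{corvar1ma}.
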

  
\begin{proof}Formula \eqref{iva1} is nothing but formula \eqref{iva}. Set  $t=0$ in formula \eqref{kj}.
If $\MS\in L^1(S; \sigma\rr^{n-1})$, then we can integrate this formula over  $S$.
Indeed, under such an assumption, all terms in the formula above turn out to be in $L^1(S; \sigma\rr^{n-1})$.  More precisely, for what concerns the  term $\div\ts\left( W\ot|\PH\nu|\right)$,   note that $W\ot\in\XX^1(\TS)=\cont^1(S,\TS)$ and  that $|\PH\nu|$ is Lipschitz continuous.   Moreover, if  $\MS\in L^1(S; \sigma\rr^{n-1})$,  the second term $\div\ts\left(\langle W,\nu\rangle\nn\ot\right)$ belongs to $L^1 (S; \sigma\rr^{n-1})$. In fact, one has$$\div\ts\left(\langle W,\nu\rangle\nn\ot\right)=\div\ts\left(\langle W,\nu\rangle\left(\nn-|\PH\nu|\nu\right)\right)$$and the claim  easily follows by using Lemma \ref{inlemma}. 
Hence, we have $$I_{S}(\per)=\int_{S}\dot{\Gamma}(0)=\int_{S}\mathcal{L}_{\WW} \pert\big|_{t=0}=\int_{S}\left(-\MS\langle W,\nu\rangle +\div\ts \left( W\ot|\PH\nu|-\langle W,\nu\rangle\nn\ot \right)\right)\,\sigma\rr^{n-1}.$$\end{proof}

\begin{corollario}\label{corvar1ma}Let the assumptions of Theorem \ref{1vg} hold. Let  $\partial S$ be of class $\cont^1$ and let $\eta$ be the unit outward normal along $\partial S$.  Then
 \begin{eqnarray}\label{corfva}I_S(W,\per)=
\int_{S} -\MS\cn\,\per +\int_{\partial S} \left\langle \left( W\ot|\PH\nu|-\langle W,\nu\rangle\nn\ot \right),\eta\right\rangle  \sigma\rr^{n-2}.\end{eqnarray}Furthermore, if $W$ is compactly supported on $S$, then \begin{eqnarray}\label{corfva}I_S(W,\per)=
\int_{S} -\MS\cn\,\per.\end{eqnarray}
\end{corollario}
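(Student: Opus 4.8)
The plan is to deduce \eqref{corfva} from formula \eqref{fva} of Theorem~\ref{1vg}, which is available here since the hypotheses of that theorem are assumed (in particular $\MS\in L^1(S;\sigma\rr^{n-1})$). That formula reads
\[
I_S(W,\per)=\int_{S}-\MS\cn\,\per+\int_{S}\div\ts\!\left(W\ot|\PH\nu|-\langle W,\nu\rangle\nn\ot\right)\sigma\rr^{n-1},
\]
so the first summand is already in the desired shape and everything reduces to rewriting the second summand as a boundary integral via the (Riemannian) divergence theorem on $S$. First I would set $X\ot:=W\ot|\PH\nu|-\langle W,\nu\rangle\nn\ot$ and record its basic properties: it is a section of $\TS$ (both $W\ot$ and $\nn\ot=\nn-|\PH\nu|\nu$ are tangent to $S$); it is globally bounded, $|X\ot|\le 2|W|$, hence $X\ot\in L^\infty(S,\TS)$; on $S\setminus C_S$ it is Lipschitz continuous, being the product of the $\cont^1$ field $W\ot$ with the Lipschitz function $|\PH\nu|$ plus the field $\langle W,\nu\rangle\nn\ot$, which is $\cont^1$ away from the characteristic set (recall that $\nu$, $\nn$ and $|\PH\nu|$ are $\cont^1$ on $S\setminus C_S$ when $S$ is $\cont^2$); and finally $\div\ts X\ot\in L^1(S;\sigma\rr^{n-1})$, which is exactly what was checked in the proof of Theorem~\ref{1vg} using Lemma~\ref{inlemma}.

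Next I would run the generalized Stokes theorem on the compact $(n-1)$-manifold with boundary $S$, applied to the $(n-2)$-form $\alpha:=(X\ot\LL\sigma\rr^{n-1})|_S$. One has $d\alpha=(\div\ts X\ot)\,\sigma\rr^{n-1}$, and the restriction of $\alpha$ to $\partial S$ equals $\langle X\ot,\eta\rangle\,\sigma\rr^{n-2}$ (the part of $X\ot$ tangent to $\partial S$ gives no contribution after restriction). Since $\alpha\in L^\infty(S)$, $d\alpha\in L^1(S)$, and the restriction of $\alpha$ to $\partial S$ is bounded, condition $(\spadesuit)$ in the discussion following Proposition~\ref{ST} applies — note also that $\sigma\rr^{n-1}(C_S)=0$ for a $\cont^2$ hypersurface by \cite{bal}, so that $\alpha$ and $d\alpha$ are defined $\sigma\rr^{n-1}$-a.e. — and yields $\int_{S}\div\ts X\ot\,\sigma\rr^{n-1}=\int_{\partial S}\langle X\ot,\eta\rangle\,\sigma\rr^{n-2}$. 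Substituting back into \eqref{fva} gives \eqref{corfva}. For the final assertion, if $W$ has compact support in $S\setminus\partial S$ (or if $\partial S=\emptyset$), then $X\ot$ vanishes in a neighbourhood of $\partial S$, the boundary integral disappears, and one is left with $I_S(W,\per)=\int_{S}-\MS\cn\,\per$.

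The only genuine difficulty is the behaviour at the characteristic set $C_S$: the tangent field $X\ot$ is merely bounded and possibly discontinuous across $C_S$, so the classical divergence theorem is not directly applicable. This is precisely the situation handled by the generalized form $(\spadesuit)$ of Stokes' theorem, together with $\sigma\rr^{n-1}(C_S)=0$; when the support of $W$ (or the whole of $S$) is disjoint from $C_S$, the classical divergence theorem — or Proposition~\ref{divtoeRiemW12} in the compactly supported case — would already suffice. Everything else is routine bookkeeping.
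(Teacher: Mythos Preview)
Your proposal is correct and follows essentially the same route as the paper: both arguments verify that the tangential field $Y=W\ot|\PH\nu|-\langle W,\nu\rangle\nn\ot$ is admissible for the Riemannian divergence formula in the sense of condition~$(\spadesuit)$ (namely $Y\in L^\infty(S)$ and $\div\ts Y\in L^1(S;\sigma\rr^{n-1})$, the latter being inherited from the proof of Theorem~\ref{1vg}), and then invoke the generalized Stokes/divergence theorem to convert the second integral in \eqref{fva} into the boundary term. Your write-up is simply a more detailed unpacking of what the paper compresses into the single word ``Immediate''.
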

\begin{proof}Immediate, since the vector field  $Y:=W\ot|\PH\nu|-\langle W\op,\nu\rangle\nn\ot$ is admissible (for the Riemannian divergence formula); see condition ($\spadesuit$) in Definition \ref{adm}. In fact, we see that $Y\in L^\infty(S)$ and using the fact that $\MS\in L^1(S; \sigma\rr^{n-1})$  yields $\div\ts(Y)\in L^1(S; \sigma\rr^{n-1})$.
 \end{proof}

\noindent{\bf 2nd variation.}\,\,We  regard this proof as a continuation of the proof of the 1st variation formula.
From now on, we  assume  $\UU$ and $S$ to be  of class $\cont^3$. Moreover, the boundary $\partial\UU$ (or, $\partial S$ when $S$ is compact) is assumed to be of class $\cont^1$. We also recall that, for the 2nd variation formula, the variation  $\vartheta$ is assumed to be of class $\cont^3$ on $]-\epsilon, \epsilon[\times\UU$. 

First, let us compute the second time-derivative of the function $f(t, x)$; see \eqref{faz}. To this end we begin with formula
\eqref{ujh}. We have
\begin{eqnarray}\nonumber\frac{d^2 f}{dt^2}&=& \frac{d }{dt}\left[\frac{d\,|\P\ct\nu^t|}{dt}\,\mathcal{J}ac\,\vartheta_t + |\P\ct\nu^t|\frac{d\,\mathcal{J}ac\,\vartheta_t}{dt}\right] \\\nonumber&=&  \frac{d^2\,|\P\ct\nu^t|}{dt^2}\,\mathcal{J}ac\,\vartheta_t +2\frac{d\,|\P\ct\nu^t|}{dt}\frac{d\,\mathcal{J}ac\,\vartheta_t}{dt}+  |\P\ct\nu^t|\frac{d^2\,\mathcal{J}ac\,\vartheta_t}{dt^2}.
\end{eqnarray}At a first glance, it is clear that only the first term is not bounded near the characteristic set $C_\UU$. 
More precisely, it is elementary to see that $$\frac{d^2\,|\P\ct\nu^t|}{dt^2}=\frac{\left|\frac{d\,\P\ct\nu^t}{dt}\right|^2-\left\langle\frac{d\,\P\ct\nu^t}{dt},\nn^t \right\rangle^2}{|\P\ct\nu^t|}+\left\langle\frac{d^2\,\P\ct\nu^t}{dt^2},\nn^t \right\rangle.$$This shows that, in order to differentiate under the integral sign, we  need the following further hypothesis:
\begin{itemize}
 \item[{$\mathbf (A_3)$}]  \it there exists $h\in L^1\left( \UU; \sigma\rr^{n-1}\right)$ such that $\frac{1}{|\P\ct\nu^t|}\leq h$ for every $t\in]-\epsilon, \epsilon[$.
\end{itemize}

\begin{oss}\label{remremrem}Using $(A_3)$ it is not difficult to show the validity of $(A_2)$ in Warning \ref{lwar}. 
\end{oss}

We  continue our proof of the 2nd variation of $\per$ with the calculation of
$\ddot{\Gamma}(t)$ at a fixed non-characteristic point $p_0\in\UU\setminus C_\UU$. To this end, we start from the following formula:
\begin{eqnarray}\label{IIoO}\ddot{\Gamma}(t)=\vartheta_t^\ast\left(\mathcal{L}_{\WW}\left( \WW\LL d  \pert\right)+
\, \mathcal{L}_{\WW}\,d \left( \WW\LL\pert\right)\right).\end{eqnarray}
As already said, the 2nd time-derivative of $\Gamma(t)$ can still be computed  as a Lie derivative. Moreover, since $d\circ\Lie=\Lie\circ d$, we have
 
\begin{eqnarray}\label{2vi} \ddot{\Gamma}(t)&=&\vartheta_t^\ast\left(\underbrace{\mathcal{L}_{\WW}\left( \WW\LL d  \pert\right)}_{=:A}+
 d  \,\underbrace{\mathcal{L}_{\WW}\left( \WW\LL\pert\right)}_{=:B}\right).\end{eqnarray}
The calculation of $A=\mathcal{L}_{\WW}\left( \WW\LL d  \pert\right)$ is the \textquotedblleft hard\textquotedblright\, part of the 2nd variation formula and will be done in the sequel. So let us preliminarily  consider the quantity $B=\mathcal{L}_{\WW}\left( \WW\LL\pert\right)$. (In the next  calculations we will use the following general identity for Lie derivatives of a differential form $\omega$: \begin{equation}\label{fpa}\Lie_Z(Y\LL\omega)=[Z,Y]\LL\omega+Y\LL\Lie_Z\omega;
\end{equation}see \cite{Spiv}, Ch. 9, p. 515). We have

\begin{eqnarray*}B&=&\Lie_{\WW}\left( \WW\LL\pert\right)\\&=&
 \Lie_{\WW}\left( \underbrace{\left( \WW\ot|\P\ct\nu^t|-\langle \WW,\nu^t\rangle{\nt}\ot\right)}_{=:\widetilde{Y}} \LL(\sigma^{n-1}\rr)_t\right)\qquad\mbox{(by Lemma \ref{fondam})}\\&=& [\WW, \widetilde{Y}]\LL(\sigma^{n-1}\rr)_t + \widetilde{Y}\LL\Lie_{\WW}(\sigma^{n-1}\rr)_t\qquad\mbox{(by  \ref{fpa})}\\&=& [\WW, \widetilde{Y}]\ot\LL(\sigma^{n-1}\rr)_t + \widetilde{Y}\LL\underbrace{\left(-\langle\WW,\nu^t\rangle(\mathcal{H}\rr)_t +\div\tut \left(\WW\ot\right)\right)}_{=:g_t} (\sigma^{n-1}\rr)_t\quad\mbox{(by the 1st variation  of $(\sigma^{n-1}\rr)_t$)}
\\&=&\left( [\WW, \widetilde{Y}]\ot+g_t\widetilde{Y}\right)\LL (\sigma^{n-1}\rr)_t.
\end{eqnarray*}
Therefore, the second term in formula \eqref{2vi}, i.e. $d B$, is given  by

\begin{equation}\label{dB}dB=d\left\lbrace \left( [\WW, \widetilde{Y}]\ot+g_t\widetilde{Y}\right)\LL (\sigma^{n-1}\rr)_t\right\rbrace=\div\tut\left(  [\WW, \widetilde{Y}]\ot+g_t\widetilde{Y}\right) (\sigma^{n-1}\rr)_t.
\end{equation}

\noindent{\bf Step 0.}\, {[Divergence-type terms]}.\,\, Set $t=0$.  First, note that $[\WW, \widetilde{Y}]\ot\big|_{t=0}$ is a vector field of class $\cont^1$ out of $C_\UU$. We also stress that 
$$[\WW, \widetilde{Y}]\ot\big|_{t=0}=[\WW, \WW\ot]\ot\big|_{t=0}-W(\langle W,\nu\rangle)\nn\ot-\langle W,\nu\rangle[\WW, {\nn^t}\ot]\ot\big|_{t=0}.$$ Clearly, the first term is a vector field of class  $\cont^1$. 
The second term is the product of a $\cont^1$ function times the vector field  $\nn\ot$. Although not defined at $C_\UU$,  this term  belongs to $L^\infty$. Furthermore $$[\WW, {\nn^t}\ot]\ot\big|_{t=0}=\left[\WW,\left({\nn^t}-|\P\ct\nu^t|\nu^t\right)\right]\ot\Big|_{t=0}=[\WW,\nn^t]\ot\big|_{t=0}-|\P\cc\nu|\,[\WW,\nu^t]\big|_{t=0}.$$By using the very definition of $\nn^t=\frac{\P\ct\nu^t}{|\P\ct\nu^t|}$, we easily see that $[\WW,\nn^t]\ot\big|_{t=0}$ can be estimated near the characteristic set $C_\UU$ by (a constant times) the function $\frac{1}{|\P\cc\nu|}$. Continuing this argument, it is not difficult to
realize that the tangential divergence of  $ [\WW, \widetilde{Y}]\ot$, at $t=0$, i.e. $\div\ts[\WW, \widetilde{Y}]\ot\big|_{t=0}$ , can be estimated, locally around $C_\UU$, by  (a constant times) the function $\frac{1}{|\P\cc\nu|^2}$. An analogous  argument can be repeated
for the second divergence-type term in formula \eqref{dB}. In fact, since the function $g_t$ is of class $\cont^1$  on $\UU_t$ for all $t\in]-\epsilon, \epsilon[$, we  see that, near the characteristic set $C_\UU$, the function $\div\tu(g_0 Y)$ can be  estimated by (a constant times) the function $\frac{1}{|\P\cc\nu|}$.
 
\begin{oss}The previous estimates show that, in order to integrate the divergence-type term $dB$ on $\UU$, we need a further condition. More precisely, we have  (at least) to require that  $\frac{1}{|\P\cc\nu|^2}\in L^1(\UU, \sigma\rr^{n-1})$.
\end{oss}

\noindent{\bf Step 1.}\, We start with the calculation of the term $A$ in formula \eqref{2vi}. 

\begin{war}\label{lwar2}
In order to simplify our calculations, hereafter we shall assume  \rm $\MS$ to be constant.
\end{war}

\begin{oss} \label{aria}We stress that if  $\MS=\mbox{\rm{const.}}$, then
$\mathcal{L}_X\MS=0$ for all $X\in\XX^1(\TS)$. In particular, if $W$ denotes
the variation vector of $\vartheta_t$, we have
$\iu^\ast\left(\mathcal{L}_{\WW\ss}\MST\right)=\mathcal{L}_{W\ss}\MS=0.$ 
Analogously, we have
$\iu^\ast\left(\mathcal{L}_{\TB_\alpha}\MST\right)
=\mathcal{L}_{\TB_\alpha}\MS=0$ for all $\alpha\in I\vv$. Hence
\begin{equation}\label{dertang}\iu^\ast\left(\mathcal{L}_{\tau_{\alpha}}\MST\right)
=\iu^\ast\left(\mathcal{L}_{\varpi^t_{\alpha}\nt}\MST
\right)\qquad\forall\,\,\alpha\in
I\vv.\end{equation}
\end{oss}

If $t=0$, we have
\begin{eqnarray*} A|_{t=0}&=& \iu^\ast\left(\mathcal{L}_{\WW}\left(-\cn_t\,\MST
\pert\right)\right)\\&=& \left(-\cn\MS\Lie_{\WW}\pert - W(\cn)
\MS-\cn\, \iu^\ast\left(\mathcal{L}_{\WW} \MST\right)\right)\,\per
\\&=& -\cn\MS\left(-\MS\langle W,\nu\rangle +\div\tu\left( W\ot|\PH\nu|-\langle W,\nu\rangle\nn\ot \right)\right)\sigma\rr^{n-1}  -\left(  W(\cn) \MS +\cn\,
\iu^\ast\left(\mathcal{L}_{\WW} \MST\right)\right)\,\per\\&=&\left( \MS^2\cn^2- W(\cn) \MS -\iu^\ast\left(\mathcal{L}_{\WW} \MST\right)\right)\per -\cn\MS\div\tu\left( W\ot|\PH\nu|-\langle W,\nu\rangle\nn\ot \right)\sigma\rr^{n-1},\end{eqnarray*}where we have used the 1st variation of $\per$.\\

\noindent{\bf Step 2.}\, 
Setting
$W\spp:=w_1\nn+ W\vv$, where $W\vv=\sum_{\alpha\in I\vv}w_\alpha\tau_\alpha$, we get that
\begin{eqnarray}\label{nevenam}\iu^\ast\left(\mathcal{L}_{\WW}\MST\right)
&=&\iu^\ast\left(\mathcal{L}_{\WW\ss}\MST\right) +
\iu^\ast\left(\mathcal{L}_{\WW\spp}\MST\right)\\\nonumber  &=&
\iu^\ast\left(\mathcal{L}_{\WW\spp}\MST\right)\qquad\mbox{(by Remark
\ref{aria})}\\\nonumber
&=&\iu^\ast\left(\mathcal{L}_{\ww_1\nt}\MST\right) + \sum_{\alpha\in
I\vv}\iu^\ast\left(\mathcal{L}_{\ww_\alpha\tau_\alpha}\MST\right)\\\nonumber
&=&\iu^\ast\left(\mathcal{L}_{\ww_1\nt}\MST\right) + \sum_{\alpha\in
I\vv} \iu^\ast\left(\mathcal{L}_{
\ww_\alpha\varpi^t_\alpha\nt}\MST\right)\qquad\mbox{(by
\eqref{dertang})}\\&=&\nonumber
\iu^\ast\left(\mathcal{L}_{\cn_t\,\nt}\MST\right).
\end{eqnarray}

\noindent{\bf Step 3.}\, From  Step 2, we see that it remains to calculate
$\mathcal{L}_{\cn_t\,\nt}\MST=\cn_t\frac{\partial
\MST}{\partial\nt}.$  This will be done by using an adapted frame
$\underline{\tau}=\{\tau_1,...,\tau_n\}$ to $\UU$ which satisfies  Lemma \ref{sple} at  $p_0\in\UU$. We also recall that $\tau_1(x)=\nn(x)$ for every $x\in\UU$. We compute \begin{eqnarray*} -\dfrac{\partial
\MST}{\partial\nt}&=&\sum_{j\in
I\ss}\dfrac{\partial}{\partial\tau_1}\left\langle\nabla_{\tau_j}\tau_1,\tau_j\right\rangle\\&=&\sum_{j\in
I\ss}\left(\left\langle\nabla_{\tau_1}\nabla_{\tau_j}\tau_1,\tau_j\right\rangle+\left\langle\nabla_{\tau_j}\tau_1,\nabla_{\tau_1}\tau_j\right\rangle\right)\\&=&\sum_{j\in
I\ss}\left(\left\langle\left(\nabla_{\tau_1}\nabla_{\tau_j}\tau_1\mp
\nabla_{\tau_j}\nabla_{\tau_1} \tau_1\mp \nabla_{[\tau_1,
\tau_j]}\tau_1\right),\tau_j\right\rangle
+\sum_{k=2}^n\left\langle\nabla_{\tau_j}\tau_1,
\tau_k\right\rangle\left\langle\nabla_{\tau_1}\tau_j,\tau_k\right\rangle\right)\\&=&\sum_{j\in
I\ss}\left(
-\Phi_{1j}(\tau_1,\tau_j)+\left\langle\nabla_{\tau_j}\nabla_{\tau_1}
\tau_1,\tau_j\right\rangle+\left\langle\nabla_{[\tau_1,
\tau_j]}\tau_1,\tau_j\right\rangle +\sum_{\alpha\in
I\vv}^n\left\langle\nabla_{\tau_j}\tau_1, \tau_\alpha
\right\rangle\left\langle\nabla_{\tau_1}\tau_j,\tau_\alpha\right\rangle\right)
\end{eqnarray*}where we have used the definition of $\Phi_{1j}(\tau_1,\tau_j)$ and the fact (Lemma \ref{sple}) that $\phi_{jk}=0$ at $p_0\in\UU$ for every $j, k\in I\ss$. We have
\begin{eqnarray*} \left\langle\nabla_{[\tau_1, \tau_j]}\tau_1,\tau_j\right\rangle&=&C_{1j}^1 \phi_{1j}(\tau_1)+\sum_{k\in I\ss} C_{1j}^k\phi_{1j}(\tau_k)+\sum_{ \alpha \in I\vv}C_{1j}^\alpha\phi_{1j}(\tau_\alpha)\\&=&-\left(\phi_{1j}(\tau_1)\right)^2-\sum_{k\in I\ss} \phi_{1k}(\tau_j)\,\phi_{1j}(\tau_k)-\sum_{ \alpha \in I\vv}\left\langle C\cc^\alpha\tau_1,\tau_j\right\rangle\phi_{1j}(\tau_\alpha).
\end{eqnarray*}Moreover, using (vii) of Lemma \ref{sple} yields
\begin{eqnarray*} \left\langle\nabla_{\tau_j}\tau_1, \tau_\alpha \right\rangle\left\langle\nabla_{\tau_1}\tau_j,\tau_\alpha\right\rangle=\phi_{1\alpha}(\tau_j)\phi_{j\alpha}(\tau_1)=-\dfrac{1}{4}\left\langle C^\alpha\cc\tau_1,\tau_j
\right\rangle^2.
\end{eqnarray*}
 Therefore, Lemma \ref{riccri} implies that
\begin{eqnarray*} -\dfrac{\partial \MST}{\partial\nt}&=&  \frac{1}{2}\sum_{\alpha\in I\cd}
|C\cc^\alpha\tau_1|^2+\div\sst\left(\nabla_{\tau_1}\tau_1\right)
 - \sum_{j, k\in I\ss\,\alpha\in I\vv}  \left(\left(\phi_{1j}(\tau_1)\right)^2+  \phi_{1k}(\tau_j)\phi_{1j}(\tau_k)+ \left\langle C\cc^\alpha\tau_1,\tau_j\right\rangle\phi_{1j}(\tau_\alpha)
 \right).
\end{eqnarray*}Hence, from Lemma \ref{kl}, Lemma \ref{f} and formula \eqref{b} we get that
\begin{eqnarray*} -\dfrac{\partial \MST}{\partial\nt}&=& \frac{1}{2}\sum_{\alpha\in I\cd}
|C\cc^\alpha\tau_1|^2- \div\sst\left(\frac{\grad\sst\cn_t}{\cn_t}+ C\cc(\varpi^t\cd)\tau_1\right)
\\&&- \left|\frac{\grad\sst\cn_t}{\cn_t}+ C\cc(\varpi^t\cd)\tau_1\right|^2+\|A^t\cc\|\ngr^2-\|S^t\cc\|\ngr^2\\&&- \sum_{j\in I\ss \alpha \in I\vv}\left\langle C\cc^\alpha\tau_1,\tau_j\right\rangle\left(\frac{1}{2}\left\langle
C^\alpha\cc\tau_1,\tau_j\right\rangle-  \left\langle C(\varpi^t)\tau_\alpha,\tau_j\right\rangle+  {\tau_j( \varpi^t_\alpha)}- \varpi^t_\alpha \frac{\tau_j(\cn_t)}{\cn_t}\right).
\\&=&  - \div\sst\left(\frac{\grad\sst\cn_t}{\cn_t}+ C\cc(\varpi^t\cd)\tau_1\right)
 - \left|\frac{\grad\sst\cn_t}{\cn_t}+ C\cc(\varpi^t\cd)\tau_1\right|^2+\|A^t\cc\|\ngr^2-\|S^t\cc\|\ngr^2\\&&+ \sum_{j\in I\ss \alpha \in I\vv}\left\langle C\cc^\alpha\tau_1,\tau_j\right\rangle\left( \left\langle C(\varpi^t)\tau_\alpha,\tau_j\right\rangle- {\tau_j( \varpi^t_\alpha)}+ \varpi^t_\alpha \frac{\tau_j(\cn_t)}{\cn_t}\right)\\&=&  - \div\sst\left(\frac{\grad\sst\cn_t}{\cn_t}+ C\cc(\varpi^t\cd)\tau_1\right)
 - \left|\frac{\grad\sst \cn_t}{\cn_t}+ C\cc(\varpi^t\cd)\tau_1\right|^2+\|A^t\cc\|\ngr^2-\|S^t\cc\|\ngr^2\\&&+\sum_{\alpha \in I\vv}\left(\left\langle C\cc^\alpha\tau_1,C(\varpi^t)\tau_\alpha\right\rangle-\left\langle C\cc^\alpha\tau_1, \grad\sst\varpi^t_\alpha\right\rangle\right) + \left\langle C\cc(\varpi\cd^t)\tau_1, \frac{\grad\sst \cn_t}{\cn_t}\right\rangle\\&=& -\dfrac{\Delta\sst\cn_t}{\cn_t} - \div\sst\left( C\cc(\varpi^t\cd)\tau_1\right)
 - \left|C\cc(\varpi^t\cd)\tau_1\right|^2-2\left\langle\dfrac{\grad\sst\cn_t}{\cn_t} , C\cc(\varpi^t\cd)\tau_1\right\rangle+\|A^t\cc\|\ngr^2-\|S^t\cc\|\ngr^2\\&&+\sum_{\alpha \in I\vv}\left(\left\langle C\cc^\alpha\tau_1,C(\varpi^t)\tau_\alpha\right\rangle- \left\langle C\cc^\alpha\tau_1, \grad\sst\varpi^t_\alpha\right\rangle\right) + \left\langle C\cc(\varpi\cd^t)\tau_1, \frac{\grad\sst \cn_t}{\cn_t}\right\rangle\\&=&-\dfrac{\Delta\sst\cn_t}{\cn_t} - \div\sst\left( C\cc(\varpi^t\cd)\tau_1\right)
 - \left|C\cc(\varpi^t\cd)\tau_1\right|^2-\left\langle\dfrac{\grad\sst\cn_t}{\cn_t} , C\cc(\varpi^t\cd)\tau_1\right\rangle+\|A^t\cc\|\ngr^2-\|S^t\cc\|\ngr^2\\&&+\sum_{\alpha \in I\vv}\left(\left\langle C\cc^\alpha\tau_1,C(\varpi^t)\tau_\alpha\right\rangle-\left\langle C\cc^\alpha\tau_1, \grad\sst\varpi^t_\alpha\right\rangle\right) \\&=&-\dfrac{\mathcal{L}\sst\cn_t}{\cn_t} -  \mathcal{D}\sst\left( C\cc(\varpi^t\cd)\tau_1\right)
 +\|A^t\cc\|\ngr^2-\|S^t\cc\|\ngr^2 +\sum_{\alpha \in I\vv}\left(\left\langle C\cc^\alpha\tau_1,C(\varpi^t)\tau_\alpha\right\rangle- \left\langle C\cc^\alpha\tau_1, \grad\sst\varpi^t_\alpha\right\rangle\right).
\end{eqnarray*}

 Under the assumptions made in  Warning \ref{lwar}, Remark  \ref{remremrem} and Warning \ref{lwar2}, we can now achieve the proof of the 2nd variation  of $\per$.\\

\noindent{\bf Step 4.}\, We know by  Remark \ref{remdiff} that if $\frac{1}{|\PH\nu|^2}\in L^2(\UU, \per)$, then  the function $\cn^2$  is admissible; see Definition \ref{adm}. Hence, if $\frac{1}{|\P\cc\nu|^2}\in L^2\left(S;\per\right)$,  then
\begin{eqnarray}\nonumber II_\UU(W,\per)&=& \int_{\UU}\left((\cn\,\MS)^2-
W(\cn) \MS -\cn\,
\iu^\ast\left(\mathcal{L}_{\cn_t\,\nt}\MST\right)\right)\,\per\\\nonumber&& + \int_\UU\left(\div\tu \left( [\WW, \widetilde{Y}]\ot\big|_{t=0} +g_0 Y\right)- \cn\MS\div\tu\left( W\ot|\PH\nu|-\langle W,\nu\rangle\nn\ot \right)\right) \sigma\rr^{n-1}\\\nonumber&=& \int_{\UU}\Bigg\{ -
W(\cn) \MS +\cn^2\left((\MS)^2+\|A\cc\|\ngr^2-\|S\cc\|\ngr^2  \right)-\cn\,{\lll\cn}
  \\\nonumber&& +\cn^2\bigg[- \mathcal{D}\ss \left( C\cc(\varpi\cd)\tau_1\right)+ \sum_{\alpha \in I\vv}\left(\left\langle C\cc^\alpha\tau_1,C(\varpi)\tau_\alpha\right\rangle-\left\langle C\cc^\alpha\tau_1, \grad\ss\varpi_\alpha\right\rangle\right)\bigg]\Bigg\}\per\\\nonumber&& + \int_\UU\left(\div\tu \left( [\WW, \widetilde{Y}]\ot\big|_{t=0}+g_0 Y \right)- \cn\MS\div\tu\left( W\ot|\PH\nu|-\langle W,\nu\rangle\nn\ot \right)\right) \sigma\rr^{n-1}\end{eqnarray}
\begin{eqnarray}\nonumber&=&
 \int_{\UU}\Bigg \{-
W(\cn) \MS +\cn^2\left((\MS)^2+\|A\cc\|\ngr^2-\|S\cc\|\ngr^2  \right)+|\qq\cn|^2\qquad\mbox{(by formula \eqref{eq})}
  \\&& \label{iuiul} +\cn^2\sum_{\alpha \in I\vv} \left(-\varpi_\alpha\lh\left(C\cc^\alpha\tau_1\right)+\left\langle C\cc^\alpha\tau_1,C(\varpi)\tau_\alpha\right\rangle-2\left\langle C\cc^\alpha\tau_1, \grad\ss\varpi_\alpha\right\rangle\right)\Bigg\}\,\per\\&&\nonumber + \int_\UU\left(\div\tu \left( [\WW, \widetilde{Y}]\ot\big|_{t=0} +g_0 Y\right)- \cn\MS\div\tu\left( W\ot|\PH\nu|-\langle W,\nu\rangle\nn\ot \right)\right) \sigma\rr^{n-1}.\end{eqnarray} Using  Lemma \ref{mk} yields
\begin{eqnarray*} 
&& \int_{\UU}\Bigg\{ -
W(\cn) \MS +\cn^2\left((\MS)^2-\|A\cc\|\ngr^2-\|S\cc\|\ngr^2  \right)+|\qq\cn|^2
  \\&&  +\cn^2\sum_{\alpha \in I\vv} \bigg[- |C\cc(\varpi\cd)\tau_1|^2+\left\langle C\cc^\alpha\tau_1,C(\varpi)\tau_\alpha\right\rangle-2\left\langle C\cc^\alpha\tau_1, \grad\ss\varpi_\alpha\right\rangle \bigg]\Bigg\}\,\per\\&=&
 \int_{\UU}\bigg\{ -
W(\cn) \MS +\cn^2\left((\MS)^2-\|A\cc\|\ngr^2-\|S\cc\|\ngr^2  \right)+|\qq\cn|^2
  \\&&  -\cn^2\sum_{\alpha\in I\vv} \left\langle
\left(2\,\qq(\varpi_\alpha)-C(\varpi)\TB_\alpha\right),C^\alpha
\tau_1\right\rangle\bigg\}\,\per,\end{eqnarray*}where we recall that $\TB_\alpha=\tau_\alpha-\varpi_\alpha\tau_1$ and that $\tau_1=\nn$, $\tau_\alpha=X_\alpha$ for any $\alpha\in I\vv$; see Definition \ref{movadafr}. Finally, using the last identity in \eqref{iuiul} yields the following:
\begin{teo}[2nd variation of $\per$]\label{2vg}Let $S\subset\GG$ be a compact  
hypersurface  of class $\cont^3$ with, or without, boundary and let
 $\vartheta: ]-\epsilon,\epsilon[\times S
\rightarrow \GG$ be a ${\cont}^3$-smooth variation of $S$. Let $W=\frac{d\,\vartheta_t}{dt}\big|_{t=0}$ be the variation vector field, let $\cn:=\frac{\langle W\op,\nu\rangle}{|\P\cc\nu|}$ and let $W\op,\,W\ot$ be the normal and  tangential components of $W$ along $S$, respectively. 
We further assume that: \begin{itemize}
 \item [$\rm (i)$] there exists\footnote{Alternatively, we can assume the validity of $(A_2)$ in Warning \ref{lwar}.} $h\in L^1\left( \UU; \sigma\rr^{n-1}\right)$ such that $\frac{1}{|\P\ct\nu^t|}\leq h$ for every $t\in]-\epsilon, \epsilon[$;
\item [$\rm (ii)$]the horizontal mean curvature $\MS$ of $S$ is constant;\item [$\rm (iii)$] the function $\frac{1}{|\P\cc\nu|^2}\in L^2\left(S;\per\right)$.
\end{itemize}Then 
 \begin{eqnarray}\nonumber
 II_S(W,\per)&=& \int_{S}\bigg\{ -
W(\cn) \MS +\cn^2\left((\MS)^2-\|A\cc\|\ngr^2-\|S\cc\|\ngr^2  \right)+|\qq\cn|^2
  \\\label{1linea}&&  -\cn^2\sum_{\alpha\in I\vv} \left\langle
\left(2\,\qq(\varpi_\alpha)-C(\varpi)\TB_\alpha\right),C^\alpha
\nn\right\rangle\bigg\}\,\per\\\nonumber&&+  \int_S\left\lbrace \div\ts \left( [\WW, \widetilde{Y}]\ot\big|_{t=0} +g_0 Y\right)- \cn\MS\div\ts\left( W\ot|\PH\nu|-\langle W,\nu\rangle\nn\ot \right) \right\rbrace \sigma\rr^{n-1}\end{eqnarray}where $\widetilde{Y}:=\WW\ot|\P\ct\nu^t|-\langle \WW,\nu^t\rangle{\nt}\ot$,\, $Y=\widetilde{Y}|_{t=0}$ and \,$g_0=\left(-\langle W\op,\nu \rangle \mathcal{H}\rr +\div\ts W\ot\right)$.
\end{teo}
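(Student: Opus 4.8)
The plan is to compute the pointwise second derivative $\ddot\Gamma(0)$ at a non-characteristic point $p_0$ and then integrate over $S$, running in parallel with the first-variation argument established above. Since $\Gamma(t)=\vartheta_t^\ast\pert$ and the Lie derivative commutes with pull-back, iterating Cartan's identity \eqref{ld} together with $d\circ\Lie=\Lie\circ d$ gives $\ddot\Gamma(t)=\vartheta_t^\ast(A+dB)$, where $A:=\Lie_{\WW}(\WW\LL d\pert)$ and $B:=\Lie_{\WW}(\WW\LL\pert)$. I would evaluate $A|_{t=0}$ and $dB|_{t=0}$ separately, using throughout the standing assumption $\MS=\mathrm{const}$ of Warning \ref{lwar2}.

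The term $dB$ is the soft part. Writing $\WW\LL\pert=\widetilde{Y}\LL(\sigma\rr^{n-1})_t$ by Lemma \ref{fondam}, the Leibniz rule \eqref{fpa} for Lie derivatives and the classical first variation of the Riemannian area form give $B=\big([\WW,\widetilde{Y}]\ot+g_t\widetilde{Y}\big)\LL(\sigma\rr^{n-1})_t$, hence $dB=\div\tut\big([\WW,\widetilde{Y}]\ot+g_t\widetilde{Y}\big)(\sigma\rr^{n-1})_t$; at $t=0$ this is the divergence-type line of \eqref{1linea}. A routine but careful bookkeeping, repeatedly splitting $\nn^t\ot=\nn^t-|\P\ct\nu^t|\nu^t$, shows these integrands blow up near $C_S$ at worst like $1/|\P\cc\nu|^2$, so condition (iii), which yields $1/|\P\cc\nu|^2\in L^1(S;\sigma\rr^{n-1})$, makes them integrable, while condition (i) --- equivalently $(A_2)$ of Warning \ref{lwar}, see Remark \ref{remremrem} --- legitimizes differentiating under the integral sign.

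The term $A$ carries the geometry and is the main obstacle. Using $\MS=\mathrm{const}$ one has $\iu^\ast(\Lie_X\MST)=0$ for $X$ tangent to $S$, so with $d\pert=-\MST(\sigma\rr^n)_t$ and the first variation formula \eqref{kj}, $A|_{t=0}$ reduces to terms in $W(\cn)\MS$, $\MS^2\cn^2$ and $\iu^\ast(\Lie_{\WW}\MST)$; by \eqref{dertang} this last one equals $\iu^\ast(\Lie_{\cn_t\nt}\MST)$. Everything thus comes down to the normal derivative $\partial\MST/\partial\nt$ of the horizontal mean curvature, which I would compute in an adapted orthonormal frame $\underline{\tau}$ normalized at $p_0$ so that $\phi_{jk}(p_0)=0$ for $j,k\in I\ss$ (Lemma \ref{sple}): expand $\partial_{\tau_1}\langle\nabla_{\tau_j}\tau_1,\tau_j\rangle$, insert the Riemannian curvature so as to invoke the Ricci-type identities of Lemma \ref{ricric} and Lemma \ref{riccri}, and substitute the connection-form identities of Lemma \ref{xzc} and \eqref{b}, of Lemma \ref{f}, and of Lemma \ref{kl}. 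The heart of the proof is the ensuing algebraic simplification, regrouping the structural-constant terms so that $\Delta\sst\cn_t$ and the operators $\lll$, $\lh$ emerge; it yields $-\partial\MST/\partial\nt$ in the form $-\mathcal{L}\sst\cn_t/\cn_t-\mathcal{D}\sst(C\cc(\varpi^t\cd)\tau_1)+\|A^t\cc\|\ngr^2-\|S^t\cc\|\ngr^2+\sum_{\alpha\in I\vv}\big(\langle C\cc^\alpha\tau_1,C(\varpi^t)\tau_\alpha\rangle-\langle C\cc^\alpha\tau_1,\grad\sst\varpi^t_\alpha\rangle\big)$.

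Finally I would assemble $A|_{t=0}+dB$, integrate over $S$, and simplify. The term $\int_S\cn(-\lll\cn)\per$ becomes $\int_S|\qq\cn|^2\per$ by the integration-by-parts identity \eqref{eq} of Corollary \ref{GDc}, which applies because (iii) makes $\cn^2$ admissible in the sense of Definition \ref{adm} (Remark \ref{remdiff}). A Leibniz expansion turns $\mathcal{D}\ss(C\cc(\varpi\cd)\tau_1)$ into $\sum_{\alpha\in I\vv}\varpi_\alpha\lh(C\cc^\alpha\tau_1)$ plus gradient terms, and Lemma \ref{mk} evaluates $\sum_{\alpha\in I\vv}\varpi_\alpha\lh(C\cc^\alpha\tau_1)=2\|A\cc\|\ngr^2+|C\cc(\varpi\cd)\tau_1|^2$; combining this with the $\|A\cc\|\ngr^2$, $\|S\cc\|\ngr^2$ and the surviving structural-constant terms, and recalling $\TB_\alpha=\tau_\alpha-\varpi_\alpha\tau_1$, $\tau_1=\nn$, $\tau_\alpha=X_\alpha$, collapses everything into the single sum $-\cn^2\sum_{\alpha\in I\vv}\langle 2\,\qq(\varpi_\alpha)-C(\varpi)\TB_\alpha,\,C^\alpha\nn\rangle$, i.e.\ \eqref{1linea}. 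I expect this assembly to be purely mechanical; essentially all of the difficulty lies in the frame computation of $\partial\MST/\partial\nt$ and in tracking integrability once the characteristic set is allowed to move.
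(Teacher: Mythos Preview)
Your proposal is correct and follows essentially the same route as the paper: the same $A+dB$ decomposition via iterated Cartan identity, the same computation of $B$ through Lemma \ref{fondam} and \eqref{fpa}, the same reduction of $A$ to $\partial\MST/\partial\nt$ via \eqref{dertang} under the constant-$\MS$ hypothesis, the same adapted-frame calculation using Lemmas \ref{sple}, \ref{riccri}, \ref{kl}, \ref{f} and \eqref{b}, and the same final assembly with \eqref{eq} and Lemma \ref{mk}. Your integrability bookkeeping (condition (iii) giving both admissibility of $\cn^2$ and $L^1$-control of the $1/|\P\cc\nu|^2$ blow-up, condition (i) justifying differentiation under the integral) also matches the paper's Step 0 and Step 4.
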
\begin{proof}If $C_S\neq\emptyset$, then $\rm (i)$ implies the possibility to differentiate  under the integral sign the function $f(t,x)$ defined by formula \eqref{faz}. This has been done by using the machinery of differential forms. This way we have obtained \eqref{1linea} by further assuming that $\MS$ is constant. Nevertheless,  we have to take care of the existence of the  involved integrals. The integrability of the divergence-type terms has been already discussed at Step 0. We recall that if $\frac{1}{|\P\cc\nu|^2}\in L^1(S; \sigma\rr^{n-1})$ it follows that all these terms are integrable. Clearly, the latter condition is automatically implied by  (iii).   Moreover, the condition $\frac{1}{|\PH\nu|^2}\in L^2(S, \per)$ implies that function $\cn^2$ is admissible; see Definition \ref{adm}. Hence,  using formula  \eqref{eq},  we see that the function $-\cn\,\lll\cn$ can be integrated by parts, as previously done. Furthermore, a rather tedious (but completely elementary) analysis shows that the same condition implies that each term in \eqref{1linea} is integrable over $S$.  More precisely, the integral of each of these terms can be estimated, near the characteristic set $C_S$, by (a constant times) $\int_S\frac{1}{|\P\cc\nu|^4}\,\per$. (Note that these estimates follow basically from  the same calculation performed in formula \eqref{gghh} of Remark \ref{remfac}. In particular, one uses the following $$X|\P\cc\nu|=\frac{\left\langle\left[ \mathcal{J}\rr(\P\cc\nu)\right]^{Tr}\,\P\cc\nu, X\right\rangle}{|\P\cc\nu|}$$ for every $X\in\XG$. An analogous argument was made at Step 0). This achieves the proof.
 \end{proof}

\begin{corollario}\label{corvar2stat}Let the assumptions of Theorem \ref{2vg} hold and let $\vartheta$ be compactly supported on $S$.  
If $S$ is $\HH$-minimal, i.e.  $\MS=0$, then
 \[II_S(W,\per)=\int_{S}\left\{ |\qq\cn|^2   -\cn^2\left(\|A\cc\|\ngr^2+\|S\cc\|\ngr^2 +\sum_{\alpha\in I\vv} \left\langle
\left(2\,\qq(\varpi_\alpha)-C(\varpi)\TB_\alpha\right),C^\alpha
\nn\right\rangle\right) \right\}\,\per.\]\end{corollario}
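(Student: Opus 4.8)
The plan is to deduce the corollary from the general 2nd variation formula of Theorem~\ref{2vg} by specialising to the $\HH$-minimal case and exploiting that the variation $\vartheta$ is compactly supported. Since an $\HH$-minimal hypersurface satisfies $\MS\equiv 0$, in particular $\MS$ is constant, so hypothesis (ii) of Theorem~\ref{2vg} is met and formula \eqref{1linea} applies; hypotheses (i) and (iii) are precisely those assumed here, and now $\vartheta$, the variation vector $W$, and every vector field built from $W$ are compactly supported on $S$.

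First I would put $\MS=0$ into \eqref{1linea}. This annihilates three terms at once: $-W(\cn)\MS$ and $\cn^2(\MS)^2$ on the first line, and the term $-\cn\MS\,\div\ts\!\big(W\ot|\PH\nu|-\langle W,\nu\rangle\nn\ot\big)$ on the last line of \eqref{1linea}. What survives is
\[
II_S(W,\per)=\int_S\!\Big\{|\qq\cn|^2-\cn^2\big(\|A\cc\|\ngr^2+\|S\cc\|\ngr^2\big)-\cn^2\!\!\sum_{\alpha\in I\vv}\!\big\langle 2\,\qq(\varpi_\alpha)-C(\varpi)\TB_\alpha,\,C^\alpha\nn\big\rangle\Big\}\,\per+\int_S\div\ts Z\,\sigma\rr^{n-1},
\]
where $Z:=[\WW,\widetilde Y]\ot|_{t=0}+g_0\,Y$ and $\widetilde Y$, $Y$, $g_0$ are as in Theorem~\ref{2vg}. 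Since $\|A\cc\|\ngr^2+\|S\cc\|\ngr^2=\|B\cc\|^2\ngr$ and $\varpi$, $C^\alpha$, $\TB_\alpha$ are the objects of Definitions~\ref{defini} and~\ref{movadafr}, the first integral is exactly the asserted right-hand side. Hence the whole statement reduces to showing that the divergence-type integral $\int_S\div\ts Z\,\sigma\rr^{n-1}$ vanishes.

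This last vanishing is the only substantive point, and it is where I expect the (mild) technical difficulty to sit. The field $Z$ is compactly supported on $S$, since $\vartheta$ is, and it is of class $\cont^1$ on $S\setminus C_S$; here one uses that $S$ and $\vartheta$ are of class $\cont^3$ together with $\MS=0$, which keeps $\div\ts\nn$ bounded via Lemma~\ref{inlemma}. Near the characteristic set $Z$ may be singular, but the estimates already carried out at Step~0 of the proof of Theorem~\ref{2vg} bound $|Z|$ by a constant multiple of $1/|\PH\nu|$ and $|\div\ts Z|$ by a constant multiple of $1/|\PH\nu|^2$; combined with hypothesis (iii) — which is exactly what is used there to render the divergence-type terms integrable — these place $\div\ts Z$ in $L^1(S;\sigma\rr^{n-1})$ and $Z$ in $W^{1,2}_{comp}(S;\TS)$. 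A divergence-theorem argument then gives $\int_S\div\ts Z\,\sigma\rr^{n-1}=0$: either apply Proposition~\ref{divtoeRiemW12} directly, or, using $\sigma\rr^{n-1}(C_S)=0$, multiply $Z$ by cut-offs vanishing near $C_S$, apply the classical divergence theorem to the resulting compactly supported $\cont^1$ fields, and let the cut-offs approach $1$, the error term vanishing by the integrability of $Z$. Feeding this back into the displayed identity — compact support removing any boundary contribution — yields the stated formula for $II_S(W,\per)$, so the proof is essentially bookkeeping on top of Theorem~\ref{2vg}.
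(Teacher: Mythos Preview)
Your proposal is correct and follows essentially the same route as the paper: substitute $\MS=0$ into \eqref{1linea}, then kill the remaining divergence integral by showing the tangential field $Z$ lies in $W^{1,2}_{comp}(S;\TS)$ and invoking Proposition~\ref{divtoeRiemW12}. The only organizational difference is that the paper treats the two summands of $Z$ separately---handling $g_0Y$ via the admissibility condition $(\spadesuit)$ (using that $g_0$ is $\cont^1$ and vanishes on $\partial S$) and reserving the $W^{1,2}$ argument for $[\WW,\widetilde Y]\ot|_{t=0}$, where it writes out the Lie bracket explicitly to isolate the singular addend---whereas you bundle both into $Z$; either works.
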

\begin{proof}We just have to analyze the 2nd integral in formula \eqref{1linea}. We already know that $Y$ is admissible; see Corollary \ref{corvar1ma}. Since $g_0$ is $\cont^1$-smooth on $S$ and $g_0=0$ on $\partial S$, we can conclude that $g_0Y$ is admissible. Hence $\int_S \div\ts\left(g_0Y\right)\,\sigma\rr^{n-1}=\int_{\partial S}\langle g_0Y, \eta\rangle\,\sigma\rr^{n-2}=0$. Furthermore, since $\MS=0$ and $[\WW, \widetilde{Y}]\ot \big|_{t=0}=0$ on $\partial S$, we just have to show that $[\WW, \widetilde{Y}]\ot \big|_{t=0}$ is admissible. More precisely, below we shall prove that $[\WW, \widetilde{Y}]\ot \big|_{t=0}$ satisfies the assumptions in Proposition \ref{divtoeRiemW12}. Under our assumptions, this can  be seen as follows. First, note that  $\WW$ is of class $\cont^2$ on $]-\epsilon, \epsilon[\times S$ and that $\widetilde{Y}=\WW\ot|\P\ct\nu^t|-\langle \WW,\nu^t\rangle{\nt}\ot$ is of class $\cont^1$ on $]-\epsilon, \epsilon[\times(S\setminus C_S)$. Moreover, we have
\begin{eqnarray}\nonumber[\WW, \widetilde{Y}]\ot \big|_{t=0}&=&\left.\left( \nabla_{\WW}\widetilde{Y}-\nabla_{\widetilde{Y}}\WW\right) \right|_{t=0}\\&=&\left.\left[|\P\ct\nu^t|\nabla_{\WW}\WW\ot+ \WW(|\P\ct\nu^t|)\WW\ot-\left(\WW(\langle \WW,\nu^t\rangle){\nt}\ot+ \langle \WW,\nu^t\rangle\nabla_{\widetilde{Y}}{\nt}\ot\right) \right]\right|_{t=0}.\end{eqnarray}
\it We claim that  $[\WW, \widetilde{Y}]\ot \big|_{t=0}\in W_{comp}^{1, 2}(S; \TS)$. \rm
In fact, the first addend is Lipschitz, the second and third addends are in $L^\infty$, and the fourth addend can be estimated by (a constant times) $\frac{1}{|\PH\nu|}$. It is worth remarking that the estimate of the fourth addend relies on the fact that $$\mathcal{J}\rr\nn=\mathcal{J}\rr\left(\frac{\P\cc\nu}{|\P\cc\nu|}\right)= \frac{\mathcal{J}\rr\P\cc\nu-\nn\otimes \grad\rr|\P\cc\nu|}{|\P\cc\nu|}.$$In particular, under our assumptions, we have $\frac{1}{|\PH\nu|}\in L^2(S; \sigma\rr^{n-1})$. Hence,  $[\WW, \widetilde{Y}]\ot \big|_{t=0}\in L^2(S; \sigma\rr^{n-1})$.
Continuing this argument, we easily see that each tangential derivative of 
$[\WW, \widetilde{Y}]\ot \big|_{t=0}$ can be estimated by (a constant times)  $\frac{1}{|\PH\nu|^2}$ and the claim follows since $\frac{1}{|\PH\nu|^2}\in L^2\left(S; \sigma\rr^{n-1}\right)$. \end{proof}
 
\begin{no}\label{415}For the sake of simplicity, we shall set:
\begin{eqnarray}\label{tranx}\mathcal{B}\ts:=\underbrace{\|S\cc\|\ngr^2 +\|A\cc\|\ngr^2}_{=\|B\cc\|^2\ngr} +\sum_{\alpha\in I\vv} \left\langle
 \left(2 \qq(\varpi_\alpha)-C(\varpi)\TB_\alpha\right),C^\alpha
\tau_1\right\rangle.\end{eqnarray}\end{no}

We stress that, unlike the Euclidean case where
$\mathcal{B}\ts:=\|B\rr\|\ngr^2$, it is not necessarily true that
$\mathcal{B}\ts\geq 0$;  an example of this fact can be found in
Section \ref{exf2}; see Remark \ref{urme}.
\begin{oss}[Heisenberg group; see Example \ref{hng2}]Let $S\subset\mathbb H^n$ be $\HH$-minimal and set $\nn^\circ:=-C\cc^{2n+1}\nn$. Then, we have\begin{equation}\label{sadf}
\mathcal{B}\ts=\|S\cc\|^2\ngr-\left(
2\frac{\partial\varpi}{\partial\nn^\circ} -\frac{n+1}{2}\varpi^2
\right).
\end{equation}
\end{oss}

\section{Geometric identities for constant $\HH$-mean curvature hypersurfaces}\label{PCARITGC}

\begin{lemma}\label{ljjjkl} Let $S\subset\GG$
be a  hypersurface  of class $\cont^2$ and let $\phi\in\mathbf{C}^2(\GG)$. Then
we have $$\Delta\ss\phi=\Delta\cc\phi +
\MS\frac{\partial\phi}{\partial\nn}-\left\langle{\rm
Hess}\cc\phi\,\nn,\nn\right\rangle$$at each non-characteristic point $p\in S\setminus C_S$.
\end{lemma}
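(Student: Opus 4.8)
The plan is to expand $\Delta\ss\phi=\div\ss(\qq\phi)$ in an adapted orthonormal frame, using the orthogonal splitting of the horizontal gradient into its component tangent to $\HS$ and its component along $\nn$, and then to re-assemble the resulting sum into a full horizontal trace. First I would fix a non-characteristic point $p\in S\setminus C_S$ and choose, on a neighbourhood $U$ of $p$, an adapted orthonormal frame $\underline{\tau}=\{\tau_1,\ldots,\tau_n\}$ to $\UU=U\cap S$ in the sense of Definition \ref{movadafr}; thus $\tau_1=\nn$ along $\UU$ and $\{\tau_2,\ldots,\tau_\DH\}$ is an orthonormal basis of $\HS$. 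Since $\phi\in\mathbf{C}^2(\GG)$, the horizontal gradient $\grad\cc\phi$ is of class $\cont^1$, and from $\HH=\nn\oplus\HS$ one has along $\UU$ the orthogonal decomposition
\[
\grad\cc\phi=\qq\phi+\frac{\partial\phi}{\partial\nn}\,\nn ,
\]
because $\langle\grad\cc\phi,X\rangle=X\phi=\langle\qq\phi,X\rangle$ for $X\in\HS$ while $\langle\grad\cc\phi,\nn\rangle=\nn\phi=\frac{\partial\phi}{\partial\nn}$.

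Writing $f:=\frac{\partial\phi}{\partial\nn}$ and using the identity $\gs_XY=\gc_XY-\langle\gc_XY,\nn\rangle\nn$, I would compute, for each $j\in I\ss$,
\[
\langle\gs_{\tau_j}\qq\phi,\tau_j\rangle=\langle\gc_{\tau_j}\qq\phi,\tau_j\rangle=\langle\gc_{\tau_j}\grad\cc\phi,\tau_j\rangle-(\tau_jf)\langle\nn,\tau_j\rangle-f\,\langle\gc_{\tau_j}\nn,\tau_j\rangle ,
\]
where the middle term vanishes since $\langle\nn,\tau_j\rangle=0$. Compatibility of $\gc$ with the metric together with $\langle\nn,\tau_j\rangle=0$ gives $\langle\gc_{\tau_j}\nn,\tau_j\rangle=-\langle\nn,\gc_{\tau_j}\tau_j\rangle=-B\cc(\tau_j,\tau_j)$; summing over $j\in I\ss$ and recalling $\MS={\rm Tr}\,B\cc=\sum_{j\in I\ss}B\cc(\tau_j,\tau_j)$ yields
\[
\Delta\ss\phi=\sum_{j\in I\ss}\langle\gc_{\tau_j}\grad\cc\phi,\tau_j\rangle+\MS\,\frac{\partial\phi}{\partial\nn}.
\]
Finally I would complete the sum to a trace over the whole horizontal frame $\{\tau_1,\ldots,\tau_\DH\}$: with ${\rm Hess}\cc\phi(X,Y):=\langle\gc_X\grad\cc\phi,Y\rangle$ the horizontal Hessian, the trace of $Z\mapsto\gc_Z\grad\cc\phi$ over $\HH$ is independent of the chosen orthonormal horizontal frame and equals $\Delta\cc\phi$, hence
\[
\sum_{j\in I\ss}\langle\gc_{\tau_j}\grad\cc\phi,\tau_j\rangle=\sum_{i\in I\cc}{\rm Hess}\cc\phi(\tau_i,\tau_i)-{\rm Hess}\cc\phi(\nn,\nn)=\Delta\cc\phi-\langle{\rm Hess}\cc\phi\,\nn,\nn\rangle ,
\]
and substituting into the previous display gives the asserted identity.

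I do not expect a serious obstacle: the argument is a direct computation once the orthogonal splitting of $\grad\cc\phi$ is in hand. The only points demanding care are the bookkeeping of regularity — every quantity is continuous at non-characteristic points, where $\nn$, and hence $\qq\phi$ and the projection onto $\HS$, are of class $\cont^1$ — and the observation that although $\tau_j$ is a priori defined only along $S$, the expression $\gc_{\tau_j}\grad\cc\phi$ still makes sense pointwise on $\UU$ since $\gc$ is tensorial in its lower argument; one should also note that in a Carnot group ${\rm Hess}\cc\phi$ need not be symmetric, but only its trace and its $(\nn,\nn)$-entry enter the computation, so no symmetry is used.
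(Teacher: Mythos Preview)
Your proof is correct and follows essentially the same approach as the paper: both use the orthogonal splitting $\HH=\nn\oplus\HS$ in an adapted frame to relate the $\HS$-trace to the full horizontal trace, picking up $\MS\,\partial\phi/\partial\nn$ from the cross term and $\langle{\rm Hess}\cc\phi\,\nn,\nn\rangle$ from the missing $\nn$-summand. The only organizational difference is direction: the paper starts from the invariant formula $\Delta\cc\phi=\sum_{i\in I\cc}(\tau_i^{(2)}-\gc_{\tau_i}\tau_i)(\phi)$ and extracts $\Delta\ss\phi$, whereas you start from $\Delta\ss\phi=\div\ss(\qq\phi)$ and build up to $\Delta\cc\phi$ via the covariant Hessian---your route avoids the separate verification that $\tau_1^{(2)}(\phi)-(\gc_{\tau_1}\tau_1)(\phi)=\langle{\rm Hess}\cc\phi\,\nn,\nn\rangle$, since you use $\langle\gc_{\nn}\grad\cc\phi,\nn\rangle$ directly.
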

\begin{proof}First, note that we can use  the invariant definition of the Laplacian on vector bundles; see, for instance, \cite{Ch1}. So we have
 \begin{eqnarray*}\Delta\cc\phi&=&\sum_{i\in I\cc}\left(\tau^{(2)}_i-\gc_{\tau_i}\tau_i\right)(\phi)
 \\&=&\tau^{(2)}_1
 (\phi)-\left(\gc_{\tau_1}\tau_1\right)(\phi)+\sum_{i\in I\ss}\left( \left(\tau^{(2)}_i-
 \gs_{\tau_i}\tau_i\right)(\phi)-\left\langle\gc_{\tau_i}\tau_i,
 \nn\right\rangle\frac{\partial\phi}{\partial\nn}\right)\\&=&\tau^{(2)}_1
 (\phi)-\left(\gc_{\tau_1}\tau_1\right)(\phi)+ \Delta\ss\phi
 - \MS\frac{\partial\phi}{\partial\nn}.
 \end{eqnarray*}Now we claim that $\tau^{(2)}_1
 (\phi)-\left(\gc_{\tau_1}\tau_1\right)(\phi)=\left\langle{\rm
Hess}\cc(\phi)\nn,\nn\right\rangle.$ To prove this claim, set $\tau_1=\sum_{i\in
I\cc}A^1_iX_i$ and compute
\begin{eqnarray*}\tau^{(2)}_1(\phi)=\sum_{i\in
I\cc}\left(\tau_1(A^1_iX_i(\phi))\right)=\sum_{i,j\in
I\cc}\left(\tau_1(A^1_i)X_i(\phi) +
A^1_iA^1_jX_j(X_i(\phi))\right).\end{eqnarray*}Since
$\gc_{\tau_1}\tau_1=\sum_{i, j\in I\cc}\left(\tau_1(A^1_i)X_i+
A^1_iA^1_j\underbrace{\gc_{X_i}X_j}_{=0}\right)$, we  get
that$$\tau^{(2)}_1
 (\phi)-\left(\gc_{\tau_1}\tau_1\right)(\phi)=\sum_{i,
j\in I\cc} A^1_iA^1_jX_j(X_i(\phi))=\left\langle{\rm
Hess}\cc(\phi)\nn,\nn\right\rangle,$$as wished.
 \end{proof}

\begin{lemma}\label{Doca}Let $S\subset\GG$ be a  $\cont^2$  non-characteristic hypersurface of constant horizontal mean curvature $\MS$. Then, the following identities hold:
\begin{itemize}\item[{\rm(i)}] $\sum_{i\in I\ss}\left\langle\gc_{\tau_i}\gc_{\tau_i}\nn,\nn\right\rangle=-\|B\cc\|^2\ngr$;
\item[ {\rm(ii)}]$\sum_{i\in
I\ss}\left\langle\gc_{\tau_i}\gc_{\tau_i}\nn,\tau_k\right\rangle =-\Big(\left\langle\gc_{\nn}\nn,C\ss(\varpi\cd)\tau_k\right\rangle + \sum_{\alpha\in
I\vv}\left\langle C^\alpha\cc\qq\varpi_\alpha,\tau_k\right\rangle +\MS\left\langle
C\cc(\varpi\cd)\nn,\tau_k\right\rangle -B\cc(C\cc(\varpi\cd)\nn,\tau_k)\Big)\quad \forall\,\,k\in I\ss.$ 
\end{itemize}
\end{lemma}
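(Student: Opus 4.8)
Both identities are pointwise, so it suffices to prove them at an arbitrary non-characteristic point $p_0\in S\setminus C_S$, working in an adapted orthonormal frame $\underline{\tau}$ as in Definition \ref{movadafr} (recall $\tau_1=\nn$ and $\{\tau_2,\dots,\tau_\DH\}$ is an orthonormal basis of $\HS$). Since $\gc_XY=\PH\nabla_XY$ for horizontal $X,Y$ and each $\tau_l$, $l\in I\cc$, is horizontal, one has $\gc_{\tau_i}\tau_l=\sum_{m\in I\cc}\phi_{ml}(\tau_i)\tau_m$; combining this with $\phi_{1l}(\tau_i)=\langle\nabla_{\tau_i}\tau_l,\nn\rangle=B\cc(\tau_i,\tau_l)$, $\phi_{l1}=-\phi_{1l}$ and $\phi_{11}=0$, one obtains the basic formula $\gc_{\tau_i}\nn=-\sum_{l\in I\ss}B\cc(\tau_i,\tau_l)\,\tau_l$, which is the starting point for both parts.

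\emph{Proof of (i).} Because $|\nn|\equiv1$ and $\gc$ is compatible with $g\cc$, one has $\langle\gc_X\nn,\nn\rangle\equiv0$ for every $X$; differentiating along $\tau_i$ gives $\langle\gc_{\tau_i}\gc_{\tau_i}\nn,\nn\rangle=-|\gc_{\tau_i}\nn|^2$. Summing over $i\in I\ss$ and inserting $|\gc_{\tau_i}\nn|^2=\sum_{l\in I\ss}B\cc(\tau_i,\tau_l)^2$ yields $\sum_{i\in I\ss}\langle\gc_{\tau_i}\gc_{\tau_i}\nn,\nn\rangle=-\sum_{i,l\in I\ss}B\cc(\tau_i,\tau_l)^2=-\|B\cc\|^2\ngr$, and $\|B\cc\|^2\ngr=\|S\cc\|^2\ngr+\|A\cc\|^2\ngr$ since $\mathrm{tr}(S\cc A\cc)$ vanishes for $S\cc$ symmetric and $A\cc$ skew.

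\emph{Proof of (ii).} Now choose $\underline{\tau}$ as in Lemma \ref{sple}, so that $\phi_{lm}(p_0)=0$ for all $l,m\in I\ss$. Differentiating the basic formula for $\gc_{\tau_i}\nn$, the terms $\langle\gc_{\tau_i}\tau_l,\tau_k\rangle=\phi_{kl}(\tau_i)$ vanish at $p_0$ for $k,l\in I\ss$, so $\sum_{i\in I\ss}\langle\gc_{\tau_i}\gc_{\tau_i}\nn,\tau_k\rangle=-\sum_{i\in I\ss}\tau_i\bigl(B\cc(\tau_i,\tau_k)\bigr)$ at $p_0$. The rest is a Codazzi-type evaluation of the right-hand side for the non-symmetric form $B\cc(\tau_i,\tau_k)=\phi_{1k}(\tau_i)$: use Lemma \ref{primavolta}(i) to write $\phi_{1k}(\tau_i)=\phi_{1i}(\tau_k)+\langle C\cc(\varpi\cd)\tau_k,\tau_i\rangle$; apply the second Cartan structure equation $d\phi_{1i}=\sum_l\phi_{1l}\wedge\phi_{li}-\Phi_{1i}$ on $(\tau_k,\tau_i)$ to replace $\sum_i\tau_i(\phi_{1i}(\tau_k))$ by $\tau_k(\MS)=0$ (the constant-mean-curvature hypothesis), the bracket terms $\sum_i\phi_{1i}([\tau_k,\tau_i])$ and the wedge/curvature terms; expand $[\tau_k,\tau_i]$ by \eqref{fr} and the Carnot bracket relations (its $\HS$-component vanishes at $p_0$, $\langle[\tau_k,\tau_i],\nn\rangle=\langle C\cc(\varpi\cd)\tau_k,\tau_i\rangle$, vertical part $\sum_{\alpha\in I\cd}\langle C^\alpha\cc\tau_k,\tau_i\rangle\tau_\alpha$); substitute $\phi_{1i}(\tau_1)=-\langle\gc_\nn\nn,\tau_i\rangle$, $\phi_{1\alpha}(\tau_i)=\tfrac12\langle C^\alpha\cc\nn,\tau_i\rangle$, $\phi_{\alpha i}(\tau_i)=0$ (Lemma \ref{primavolta}(vi),(vii)) and, for $\phi_{1i}(\tau_\alpha)$, Lemma \ref{primavolta}(ii) together with $\TB_\alpha=\tau_\alpha-\varpi_\alpha\tau_1$ (here $\tau_i(\varpi_\alpha)=\langle\qq\varpi_\alpha,\tau_i\rangle$ enters); finally evaluate $\sum_{i\in I\ss}\Phi_{1i}(\tau_k,\tau_i)$ by Lemma \ref{riccri} with $X=\tau_k$. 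Contracting over $i$ (using $\sum_{i\in I\ss}\langle v,\tau_i\rangle\tau_i=v$ for $v\in\HS$, applied to $v=C^\alpha\cc\nn,\ \gc_\nn\nn,\ \qq\varpi_\alpha$) then assembles the four asserted terms; in particular $\MS\langle C\cc(\varpi\cd)\nn,\tau_k\rangle$ and $-B\cc(C\cc(\varpi\cd)\nn,\tau_k)$ emerge from $-\sum_i\tau_i(\langle C\cc(\varpi\cd)\tau_k,\tau_i\rangle)$ after writing $\gc_{\tau_i}\tau_i=B\cc(\tau_i,\tau_i)\nn+\gs_{\tau_i}\tau_i$ and using $\gs_{\tau_i}\tau_i|_{p_0}=0$.

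The main obstacle is precisely this bookkeeping. One must verify that every $\varpi$-independent contribution that is quadratic in the structure constants — from the $l\in I\vv$ part of $\sum_i(\phi_{1l}\wedge\phi_{li})(\tau_k,\tau_i)$, from the $\tfrac12\langle C^\alpha\cc\nn,\tau_i\rangle$ term inside $\phi_{1i}(\tau_\alpha)$, and from the Ricci-type term furnished by Lemma \ref{riccri} — cancels identically, leaving only the $\varpi$-linear terms, which regroup exactly as $\langle\gc_\nn\nn,C\ss(\varpi\cd)\tau_k\rangle+\sum_{\alpha\in I\vv}\langle C^\alpha\cc\qq\varpi_\alpha,\tau_k\rangle+\MS\langle C\cc(\varpi\cd)\nn,\tau_k\rangle-B\cc(C\cc(\varpi\cd)\nn,\tau_k)$. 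Beyond this (purely computational) cancellation, no ingredient is needed other than the already-proved structural identities of Lemmas \ref{primavolta}, \ref{sple} and \ref{riccri} and the hypothesis $\MS=\mathrm{const}$.
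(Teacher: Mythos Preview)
Your proof of (i) is identical to the paper's. For (ii), however, you take a genuinely different path. After reaching the same starting identity $\sum_{i\in I\ss}\langle\gc_{\tau_i}\gc_{\tau_i}\nn,\tau_k\rangle=-\sum_{i\in I\ss}\tau_i\bigl(B\cc(\tau_i,\tau_k)\bigr)$ at $p_0$, the paper does \emph{not} pass through the second Cartan structure equation or the Riemannian curvature. Instead it rewrites $A_i=-\langle\nn,\gc_{\tau_i}\gc_{\tau_i}\tau_k\rangle$, uses that $\gc$ is torsion-free to replace $\gc_{\tau_i}\tau_k$ by $\gc_{\tau_k}\tau_i+[\tau_i,\tau_k]\cc$, and then invokes the \emph{flatness} of the horizontal connection (Remark~\ref{rh0}, $\RC\cc\equiv0$) to commute $\gc_{\tau_i}\gc_{\tau_k}\tau_i$ into $\gc_{\tau_k}\gc_{\tau_i}\tau_i+\gc_{[\tau_i,\tau_k]\cc}\tau_i$. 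The first piece sums to a $\tau_k$-derivative of $\MS$ and vanishes by hypothesis; the second, together with the remaining term $-\tau_i\bigl(\langle C\ss(\varpi\cd)\tau_i,\tau_k\rangle\bigr)$, produces the four asserted quantities by a short direct expansion. No quadratic structure-constant cancellation is ever needed: flatness of $\gc$ removes precisely the curvature contributions that your Levi-Civita/Cartan route introduces and must then cancel by hand against the wedge terms and Lemma~\ref{riccri}. Your approach should ultimately yield the same answer, but the cancellation you flag as ``the main obstacle'' is not carried out, and note also that Lemma~\ref{riccri} is stated for $X$ transversal to $S$, so using it with $X=\tau_k\in\HS$ needs a word of justification. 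The paper's use of $\RC\cc=0$ is the cleaner and more conceptual argument.
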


\begin{proof}Throughout this proof, we  use  an adapted frame as in Lemma \ref{sple}. Fix a point $p\in S$.

\noindent {\it Proof of (i)}. Since
 $\left\langle\nn,\nn\right\rangle=1$ we get that $\left\langle\gc_{\tau_i}\nn,\nn\right\rangle=0\,\,\forall\,\,i\in I\ss$. So, we have
\begin{eqnarray*}\sum_{i\in
I\ss}\left\langle\gc_{\tau_i}\gc_{\tau_i}\nn,\nn\right\rangle&=&- \sum_{i\in
I\ss}\left\langle\gc_{\tau_i}\nn,\gc_{\tau_i}\nn\right\rangle\\=-
\sum_{i,j,k\in
I\ss}\left\langle\gc_{\tau_i}\nn,{\tau_j}\right\rangle\left\langle\gc_{\tau_i}\nn,{\tau_k}\right\rangle\left\langle\tau_j,\tau_k\right\rangle&=&-
\sum_{i,j\in
I\ss}\left\langle\gc_{\tau_i}\nn,\tau_j\right\rangle^2=-\|B\cc\|^2\ngr.\end{eqnarray*}
\noindent {\it Proof of (ii)}. Since $\left\langle\nn,\tau_k\right\rangle=0$ for any
$k\in I\ss$ we
 get that
 $\left\langle\gc_{\tau_i}\nn,\tau_k\right\rangle=-\left\langle\nn,\gc_{\tau_i}\tau_k\right\rangle$ for every $i\in I\ss$.
  Therefore\begin{eqnarray*}\left\langle\gc_{\tau_i}\gc_{\tau_i}\nn,\tau_k\right\rangle+
   \left\langle\gc_{\tau_i}\nn,\gc_{\tau_i}\tau_k\right\rangle
 =-\left\langle\gc_{\tau_i}\nn,\gc_{\tau_i}\tau_k\right\rangle-\left\langle\nn,
 \gc_{\tau_i}\gc_{\tau_i}\tau_k\right\rangle.\end{eqnarray*}
Note that $\gc_{\tau_i}\nn\in\HS$ and that, by our  choice
of the moving frame, we have $\left(\gs_{\tau_i}\tau_k\right)(p)=0$.
Hence
\begin{eqnarray*}A_i:=\left\langle\gc_{\tau_i}\gc_{\tau_i}\nn,\tau_k\right\rangle
 &=&-\left\langle\nn,\gc_{\tau_i}\gc_{\tau_i}\tau_k\right\rangle
\\ &=&-\left\langle\nn,\gc_{\tau_i}\left(\gc_{\tau_k}\tau_i +
[\tau_i,\tau_k]\cc
 \right)\right\rangle\\&=&-\left\langle\nn,
 \gc_{\tau_i}\gc_{\tau_k}\tau_i\right\rangle-\left\langle\nn,\gc_{\tau_i}\left( \left\langle[\tau_i,\tau_k]\cc,\nn\right\rangle\nn
 \right)\right\rangle\quad\mbox{(by Lemma \ref{sple})}
 \\&=&-\left\langle\nn,\gc_{\tau_i}\gc_{\tau_k}\tau_i\right\rangle-\tau_i\left(\left\langle[\tau_i,\tau_k]\cc,\nn\right\rangle\right)\qquad\forall\,\,i,k\in I\ss.\end{eqnarray*}
Now since
$\left\langle[\tau_i,\tau_k]\cc,\nu\rr\right\rangle=\left\langle[\tau_i,\tau_k],\nu\rr\right\rangle=0$,
we get that
\begin{eqnarray*}\left\langle[\tau_i,\tau_k],\nn\right\rangle=
-\sum_{\alpha\in
I\vv}\varpi_\alpha\left\langle[\tau_i,\tau_k],\tau_\alpha\right\rangle
=-\sum_{\alpha\in I\vv}\varpi_\alpha C^\alpha_{ik}=\sum_{\alpha\in
I\vv}\varpi_\alpha \left\langle C^\alpha\cc\tau_i,\tau_k\right\rangle =
\left\langle C\ss(\varpi\cd)\tau_i,\tau_k\right\rangle\quad\forall\,\,i,k\in I\ss.\end{eqnarray*}Hence
$A_i=
-\left\langle\nn,\gc_{\tau_i}\gc_{\tau_k}\tau_i\right\rangle-\tau_i\left(\left\langle
C\ss(\varpi\cd)\tau_i,\tau_k\right\rangle\right).
$ Using  $\RC\cc=0$ 
(see Remark \ref{rh0} in Section \ref{prelcar}) yields
\begin{eqnarray*}\left\langle\gc_{\tau_i}\gc_{\tau_k}\tau_i,\nn\right\rangle=\left\langle\gc_{\tau_k}\gc_{\tau_i}\tau_i,\nn\right\rangle+
\left\langle\gc_{[\tau_i,\tau_k]\cc}\tau_i,\nn\right\rangle\qquad\forall\,\,i,k\in I\ss..
\end{eqnarray*}Therefore
$$\sum_{i\in I\ss}A_i=\underbrace{-\left\langle\nn,\gc_{\tau_k}\left(\sum_{i\in I\ss}\gc_{\tau_i}\tau_i\right)\right\rangle}_{=:A}
 -\sum_{i\in I\ss}\left(
\left\langle\gc_{[\tau_i,\tau_k]\cc}\tau_i,\nn\right\rangle+\tau_i\left(\left\langle
C\ss(\varpi\cd)\tau_i,\tau_k\right\rangle\right)\right).$$ We claim that $A=0$ at
$p$. Indeed, by hypothesis, $\MS=\left\langle\sum_{i\in
I\ss}\gc_{\tau_i}\tau_i,\nn\right\rangle$ is  constant. Hence we get that
$\left\langle\sum_{i\in
I\ss}\gc_{\tau_i}\tau_i,\gc_{\tau_k}\nn\right\rangle=0$ at $p$. Furthermore, since  at  $p\in
S$  one has
$[\tau_i,\tau_k]\cc=\left\langle C\ss(\varpi\cd)\tau_i,\tau_k\right\rangle\nn$ $\forall\,\,i,k\in I\ss$, it follows  that\begin{eqnarray*}\sum_{i\in I\ss}A_i&=&\sum_{i\in
I\ss}\left(\left\langle
C\ss(\varpi\cd)\tau_i,\tau_k\right\rangle\left\langle\gc_{\nn}\nn,\tau_i\right\rangle-\tau_i\left(\left\langle
C\ss(\varpi\cd)\tau_i,\tau_k\right\rangle\right)\right)\\&=&-\left(\left\langle\gc_{\nn}\nn,C\ss(\varpi\cd)\tau_k\right\rangle+\sum_{i\in
I\ss}\tau_i\left(\left\langle
C\ss(\varpi\cd)\tau_i,\tau_k\right\rangle\right)\right).\end{eqnarray*}Finally, (ii) will
follow from the next calculation:\begin{eqnarray*}\tau_i\left(\left\langle
C\ss(\varpi\cd)\tau_i,\tau_k\right\rangle\right)&=&\sum_{\alpha\in
I\vv}\left(\tau_i(\varpi_\alpha)\left\langle
C^\alpha\cc\tau_i,\tau_k\right\rangle +\varpi_\alpha\left(\left\langle
C^\alpha\cc\gc_{\tau_i}\tau_i,\tau_k\right\rangle+\left\langle
C^\alpha\cc\tau_i,\gc_{\tau_i}\tau_k\right\rangle\right)\right)\\&=&\sum_{\alpha\in
I\vv}\left(\tau_i(\varpi_\alpha)\left\langle
C^\alpha\cc\tau_i,\tau_k\right\rangle +\varpi_\alpha\left(-\left\langle
\gc_{\tau_i}\tau_i,\nn\right\rangle\left\langle
C^\alpha\cc\tau_k,\nn\right\rangle+\left\langle
C^\alpha\cc\tau_i,\nn\right\rangle\left\langle
\gc_{\tau_i}\tau_k,\nn\right\rangle\right)\right)\\&=&\sum_{\alpha\in
I\vv}\tau_i(\varpi_\alpha)\left\langle C^\alpha\cc\tau_i,\tau_k\right\rangle
+\MS\left\langle C\cc(\varpi\cd)\nn,\tau_k\right\rangle-B\cc(C\cc(\varpi\cd),\tau_k).
\end{eqnarray*}
\end{proof}
Using (i) of Lemma \ref{Doca}, yields the following \textquotedblleft folklore\textquotedblright\, result:
\begin{Prop}Let $S\subset\GG$ be a  $\cont^2$ hypersurface of constant horizontal mean
curvature $\MS$. Then, at each non-characteristic point $p\in S\setminus C_S$, we have: \begin{itemize}
 \item [{\rm(i)}] $ \left\langle\overrightarrow{\Delta\ss}\nn,\nn\right\rangle=-\|B\cc\|^2\ngr$;\item [{\rm(ii)}]
$  \overrightarrow{\Delta\ss} x\cc =\MS\nn$.
\end{itemize}

\end{Prop}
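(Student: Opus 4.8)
The plan is to read off both identities from Lemma~\ref{Doca}(i) together with the algebraic properties of the flat $\HH$-connection $\gc$, once the definition of $\overrightarrow{\Delta\ss}$ is unwound. Recall that for an $\HH$-valued vector field $X$ along $S$ and an $\HS$-orthonormal frame $\{\tau_2,\dots,\tau_\DH\}$ one has $\overrightarrow{\Delta\ss}X=\sum_{i\in I\ss}\bigl(\gc_{\tau_i}\gc_{\tau_i}X-\gc_{\gs_{\tau_i}\tau_i}X\bigr)$, and that, by the very definition of the partial connection $\gs$, $\gc_{\tau_i}\tau_i-\gs_{\tau_i}\tau_i=\langle\gc_{\tau_i}\tau_i,\nn\rangle\nn=B\cc(\tau_i,\tau_i)\,\nn$. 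I would fix $p\in S\setminus C_S$ and complete an $\HS$-orthonormal basis at $p$ to an adapted o.n.\ frame $\underline{\tau}$ as in Lemma~\ref{sple}, so that $\phi_{ij}(p)=0$ and hence $\gs_{\tau_i}\tau_i(p)=0$ for all $i,j\in I\ss$; with respect to this frame one has $\overrightarrow{\Delta\ss}X(p)=\sum_{i\in I\ss}\gc_{\tau_i}\gc_{\tau_i}X(p)$.

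For (i): evaluating at $p$ in the above frame gives $\langle\overrightarrow{\Delta\ss}\nn,\nn\rangle(p)=\sum_{i\in I\ss}\langle\gc_{\tau_i}\gc_{\tau_i}\nn,\nn\rangle(p)$, and Lemma~\ref{Doca}(i) identifies the right-hand side with $-\|B\cc\|^2\ngr$. Since $p$ is arbitrary, this proves (i); constancy of $\MS$ intervenes only because it is a standing hypothesis of Lemma~\ref{Doca}.

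For (ii): I would first record the auxiliary fact that the horizontal position field $x\cc:=\sum_{i\in I\cc}x_iX_i$ satisfies $\gc_Y x\cc=Y$ for every $Y\in\XH$. Indeed, in the exponential coordinates of the first kind the horizontal left-invariant fields have the form $X_j=\partial_{x_j}+\sum_{k\notin I\cc}b_{jk}(x)\,\partial_{x_k}$, whence $X_j(x_i)=\delta_{ij}$ for $i,j\in I\cc$; combining this with the flatness of $\gc$ (so that $\gc_{X_j}X_i=0$) gives $\gc_{X_j}x\cc=\sum_{i\in I\cc}X_j(x_i)X_i=X_j$, and the general case follows by linearity in $Y$ over smooth functions. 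Consequently $\gc_{\tau_i}\gc_{\tau_i}x\cc=\gc_{\tau_i}\tau_i$ for $i\in I\ss$, and evaluating at $p$ in the frame above,
$$\overrightarrow{\Delta\ss}x\cc(p)=\sum_{i\in I\ss}\gc_{\tau_i}\tau_i(p)=\sum_{i\in I\ss}B\cc(\tau_i,\tau_i)\,\nn(p)=\bigl({\rm Tr}\,B\cc\bigr)\nn(p)=\MS\,\nn(p),$$
using $\gs_{\tau_i}\tau_i(p)=0$ and $\MS={\rm Tr}\,B\cc$. As $p$ is arbitrary, (ii) follows.

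I do not expect a genuine obstacle here: the analytic content is already packaged in Lemma~\ref{Doca}, and the only steps needing a moment's care are the precise reduction of $\overrightarrow{\Delta\ss}$ to a $\gc$-trace along $\HS$ in a Lemma~\ref{sple} frame (killing the first-order term $\gc_{\gs_{\tau_i}\tau_i}$) and the elementary identity $\gc_Y x\cc=Y$, which expresses that $x\cc$ is the horizontal part of the position vector field of the group. Everything else is a one-line consequence of the definitions of $\gs$, $B\cc$ and $\MS$.
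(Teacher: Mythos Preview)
Your proposal is correct and matches the paper's intended approach. The paper itself gives only a one-line indication (``Using (i) of Lemma~\ref{Doca}'') and calls the result ``folklore''; you have fleshed out exactly what is needed: for (i) the reduction via a Lemma~\ref{sple} frame to $\sum_{i\in I\ss}\langle\gc_{\tau_i}\gc_{\tau_i}\nn,\nn\rangle$ followed by Lemma~\ref{Doca}(i), and for (ii) the elementary identity $\gc_Y x\cc=Y$ (from $X_j(x_i)=\delta_{ij}$ in exponential coordinates together with flatness of $\gc$), which reduces the computation to $\sum_{i\in I\ss}\gc_{\tau_i}\tau_i=\MS\,\nn$ in that frame. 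Your remark that constancy of $\MS$ is not genuinely used in either identity (it is only a standing hypothesis of Lemma~\ref{Doca}, and in fact only its part (ii) invokes it) is also accurate.
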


 Below we shall compute the
$\HS$-laplacian $\lll$ of the function $\f:=\left\langle V\cc, \nn\right\rangle$, where $V\cc\in\XH$ is a constant horizontal left invariant vector field.
\begin{lemma}\label{suppfunct}Let $S\subset\GG$ be  $\cont^2$ hypersurface of constant horizontal mean
curvature $\MS$. Then\begin{eqnarray*}-\lll\f=\f\|B\cc\|^2\ngr+\left\langle\gc_{\nn}\nn,C\ss(\varpi\cd) V\ss\right\rangle + \sum_{\alpha\in
I\vv}\left\langle C^\alpha\cc\qq\varpi_\alpha,V\ss\right\rangle+\MS\left\langle
C\cc(\varpi\cd)\nn,V\ss\right\rangle
\end{eqnarray*}at each non-characteristic point $p\in S\setminus C_S$.
\end{lemma}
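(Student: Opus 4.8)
The plan is to compute $\lll\f = \lh(\qq\f)$ directly, using Lemma \ref{Doca} as the main engine. First I would decompose $\qq\f$ in terms of the adapted frame: since $\f=\langle V\cc,\nn\rangle$ and $V\cc$ is a constant left-invariant horizontal field (so $\gc_X V\cc = 0$ for all $X$), we have $\tau_i(\f) = \langle V\cc, \gc_{\tau_i}\nn\rangle$ for $i\in I\ss$, hence $\qq\f = \sum_{i\in I\ss}\langle V\cc,\gc_{\tau_i}\nn\rangle\,\tau_i$. It is convenient to split $V\cc = \langle V\cc,\nn\rangle\nn + V\ss = \f\,\nn + V\ss$ where $V\ss\in\HS$; since $\langle\gc_{\tau_i}\nn,\nn\rangle=0$, only the $V\ss$ part contributes, so $\qq\f = \sum_{i\in I\ss}\langle V\ss,\gc_{\tau_i}\nn\rangle\tau_i = \overrightarrow{\Delta\ss}$-type expression — more precisely it equals the $\HS$-vector with components $B\cc(\tau_i,\cdot)$ paired against $V\ss$.

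Next I would apply the second-order operator. Using $\lll\f=\Delta\ss\f + \langle C\cc(\varpi\cd)\nn,\qq\f\rangle$ from Definition \ref{Deflh}, and working at a fixed non-characteristic point $p$ with an adapted frame chosen as in Lemma \ref{sple} (so $\gs_{\tau_i}\tau_j=0$ at $p$ for $i,j\in I\ss$), the tangential Laplacian becomes $\Delta\ss\f = \sum_{i\in I\ss}\tau_i(\tau_i(\f)) = \sum_{i\in I\ss}\tau_i\langle V\ss,\gc_{\tau_i}\nn\rangle$. Expanding this derivative and pushing $V\cc$ through (it is parallel for $\gc$), the key term that appears is $\sum_{i\in I\ss}\langle V\cc, \gc_{\tau_i}\gc_{\tau_i}\nn\rangle = \langle V\cc,\overrightarrow{\Delta\ss}\nn\rangle$, plus correction terms coming from the fact that $\tau_i(\langle V\ss,\cdot\rangle)$ is not quite $\langle V\ss, \tau_i(\cdot)\rangle$ because $V\ss$ (the tangential part of $V\cc$) varies. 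I would then split $\langle V\cc,\overrightarrow{\Delta\ss}\nn\rangle = \f\langle\nn,\overrightarrow{\Delta\ss}\nn\rangle + \langle V\ss,\overrightarrow{\Delta\ss}\nn\rangle$. The first piece is $-\f\|B\cc\|^2\ngr$ by part (i) of Lemma \ref{Doca}; the second piece is handled by part (ii) of Lemma \ref{Doca}, which produces exactly the terms $\langle\gc_\nn\nn, C\ss(\varpi\cd)V\ss\rangle + \sum_{\alpha\in I\vv}\langle C^\alpha\cc\qq\varpi_\alpha,V\ss\rangle + \MS\langle C\cc(\varpi\cd)\nn,V\ss\rangle - B\cc(C\cc(\varpi\cd)\nn,V\ss)$.

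Finally I would collect the leftover first-order terms — namely the correction from differentiating $V\ss$ in $\Delta\ss\f$, plus the extra term $\langle C\cc(\varpi\cd)\nn,\qq\f\rangle$ from the definition of $\lll$ — and check that they combine to cancel the stray $-B\cc(C\cc(\varpi\cd)\nn, V\ss)$ term, leaving precisely the asserted four-term formula. Concretely, $\langle C\cc(\varpi\cd)\nn,\qq\f\rangle = \sum_{i\in I\ss}\langle C\cc(\varpi\cd)\nn,\tau_i\rangle\langle V\ss,\gc_{\tau_i}\nn\rangle = -\sum_i B\cc(\tau_i, \cdot)$ paired appropriately with $C\cc(\varpi\cd)\nn$ and $V\ss$, and by the (near-)symmetry structure $B\cc = S\cc + A\cc$ with $A\cc = \tfrac12 C\ss(\varpi\cd)$ this matches $B\cc(C\cc(\varpi\cd)\nn, V\ss)$ up to the required sign. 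I expect the main obstacle to be precisely this bookkeeping at the last step: tracking the decomposition $V\cc = \f\nn + V\ss$ through both derivatives without dropping terms, keeping straight which quantities are evaluated at $p$ using the geodesic-frame normalization (so that $\gs$-Christoffel symbols vanish but their $\nn$-components do not), and correctly using the non-symmetry of $B\cc$ — this is exactly the kind of slip flagged in Remark \ref{mistake}. Once the identity $\langle C\cc(\varpi\cd)\nn,\qq\f\rangle = B\cc(C\cc(\varpi\cd)\nn,V\ss) + (\text{terms already absorbed})$ is verified, the formula follows.
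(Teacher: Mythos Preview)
Your approach is essentially the paper's: compute $\Delta\ss\f=\sum_{i\in I\ss}\langle V\cc,\gc_{\tau_i}\gc_{\tau_i}\nn\rangle$ at a point where the geodesic frame of Lemma~\ref{sple} is in force, split along $V\cc=\f\,\nn+V\ss$, and feed the two pieces into parts (i) and (ii) of Lemma~\ref{Doca}. This is correct and matches the paper's proof.

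Two simplifications are worth noting. First, there are \emph{no} correction terms from differentiating $V\ss$: since $V\cc$ is $\gc$-parallel, one has directly $\tau_i\tau_i(\f)=\tau_i\langle V\cc,\gc_{\tau_i}\nn\rangle=\langle V\cc,\gc_{\tau_i}\gc_{\tau_i}\nn\rangle$, so the passage through $V\ss$ in your second paragraph is a detour (the ``corrections'' you anticipate cancel identically). Second, the final cancellation does not require the $S\cc+A\cc$ decomposition at all. Since $C\cc(\varpi\cd)\nn\in\HS$ and $\tau_i(\f)=\langle V\ss,\gc_{\tau_i}\nn\rangle=-B\cc(\tau_i,V\ss)$, one gets in one line
\[
\langle C\cc(\varpi\cd)\nn,\qq\f\rangle=-\sum_{i\in I\ss}\langle C\cc(\varpi\cd)\nn,\tau_i\rangle\,B\cc(\tau_i,V\ss)=-B\cc\big(C\cc(\varpi\cd)\nn,V\ss\big),
\]
which is exactly the stray term from Lemma~\ref{Doca}(ii) with the opposite sign. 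That is the whole endgame; no symmetry bookkeeping is needed.
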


\begin{proof}Fix a point $p\in S\setminus C_S$
and choose a moving frame centered at $p$; see Lemma \ref{sple}. We have
\begin{eqnarray*} \Delta\ss \f &=&\sum_{i\in
I\ss}\tau_i\tau_i(\left\langle V\cc,\nn\right\rangle) =\sum_{i\in
I\ss}\tau_i\left(
\left\langle V\cc,\gc_{\tau_i}\nn\right\rangle\right) =\sum_{i\in
I\ss}\left( \left\langle
V\cc,\gc_{\tau_i}\gc_{\tau_i}\nn\right\rangle\right)\\&=&-\Bigg(\f\|B\cc\|^2\ngr+\left\langle\gc_{\nn}\nn,C\ss(\varpi\cd) V\ss\right\rangle + \sum_{\alpha\in
I\vv}\left\langle C^\alpha\cc\qq\varpi_\alpha,V\ss\right\rangle\\&&+\MS\left\langle
C\cc(\varpi\cd)\nn,V\ss\right\rangle-B\cc(C\cc(\varpi\cd)\nn,V\ss)\Bigg),
\end{eqnarray*}
where we have used (i) and (ii) of Lemma \ref{Doca}. The thesis follows since$$B\cc(C\cc(\varpi\cd)\nn,V\ss)=-\left\langle
C\cc(\varpi\cd)\nn,\grad\ss\f\right\rangle.$$
\end{proof}

 A simple consequence of this lemma, at least from a \textquotedblleft formal\textquotedblright\, point of view, is that, in general, the function $\f$ cannot be an eigenfunction of a linear eigenvalue problem  
$\lll\varphi+\lambda{\mathcal B}\varphi=0$, where ${\mathcal B}$ is a given smooth function on $S\setminus C_S$.
This is a big difference compared with  the Euclidean case where, for any constant vector field $V\in\Rn$, the function $f=\left\langle V, \nu\right\rangle$ is \underline{always} a solution to the linear equation $\Delta\ts\varphi+ \|B\rr\|^2\ngr\varphi=0$. Here $\Delta\ts$ is the Laplace-Beltrami operator on $S$ and $B\rr$ is the 2nd fundamental form of $S$. This equation says that  $V$ is a \it Killing field \rm of any constant mean curvature hypersurface $S\subset\Rn$; see \cite{18}. Nevertheless, we have the following:

\begin{lemma}\label{vnct}Let $S\subset\GG$ be a $\cont^2$ hypersurface of constant horizontal mean
curvature. Then$$-\lll\varpi_\alpha=\varpi_\alpha \mathcal{B}\ts\qquad\forall\,\,\alpha\in I\vv $$at each non-characteristic point $p\in S\setminus C_S$.
\end{lemma}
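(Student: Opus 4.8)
The plan is to fix a non-characteristic point $p\in S\setminus C_S$ and to compute $\lll\varpi_\alpha$ there by a direct calculation paralleling the proofs of Lemma \ref{Doca} and Lemma \ref{suppfunct}. First I would choose a local (normalized) defining function $f$ for $S$ near $p$ in the sense of Definition \ref{NDF}, together with an adapted orthonormal frame $\underline{\tau}=\{\tau_1,\dots,\tau_n\}$ as in Lemma \ref{sple}, so that $\tau_1=\nn$, $\left(\gs_{\tau_j}\tau_k\right)(p)=0$ and $\phi_{jk}(p)=0$ for $j,k\in I\ss$. For an NDF one has $\tau_1(f)\equiv 1$ and $\tau_\alpha(f)\equiv\varpi_\alpha$ on $S$ (the second because $\TB_\alpha$ is tangent to $S$ while $f$ vanishes on $S$), so Lemma \ref{xzc} specializes to $\phi_{1j}(\tau_1)=-\langle C\cc(\varpi\cd)\tau_1,\tau_j\rangle$ and $\phi_{1j}(\tau_\alpha)=\frac{1}{2}\langle C^\alpha\cc\tau_1,\tau_j\rangle-\langle C(\varpi)\tau_\alpha,\tau_j\rangle+\tau_j(\varpi_\alpha)$ for $j\in I\ss$. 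From the latter one reads off $\tau_j(\varpi_\alpha)$, hence $\qq\varpi_\alpha$.

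Next I would compute $\Delta\ss\varpi_\alpha=\sum_{j\in I\ss}\tau_j\tau_j(\varpi_\alpha)$ at $p$ by differentiating the expression for $\tau_j(\varpi_\alpha)$ once more along $\tau_j$ and summing over $j\in I\ss$. The delicate term is $\sum_{j\in I\ss}\tau_j\!\left(\phi_{1j}(\tau_\alpha)\right)$: writing $\phi_{1j}(\tau_\alpha)=\langle\nabla_{\tau_\alpha}\tau_j,\tau_1\rangle$ and commuting covariant derivatives, it splits into a ``$\nabla_{\tau_j}\nabla_{\tau_j}$-of-$\tau_1$'' contribution, a curvature contribution controlled by Lemma \ref{riccri} (which cancels in the combination that appears, after $\RC\cc=0$), and commutator terms $\langle[\tau_j,\tau_\alpha],\cdot\rangle$ that reproduce the structural constants $C^\alpha\cc$, $C(\varpi)$. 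The hypothesis that $\MS$ be constant enters here exactly as in Lemma \ref{Doca}: it forces $\langle\sum_{i\in I\ss}\gc_{\tau_i}\tau_i,\gc_{\tau_k}\nn\rangle=0$ at $p$, which kills the otherwise-uncontrolled term. The remaining pieces are handled by the product rule, producing factors $\qq\varpi_\beta$, $C\cc(\varpi\cd)\tau_1$, and $C^\alpha$.

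Adding the lower-order term $\langle C\cc(\varpi\cd)\nn,\qq\varpi_\alpha\rangle$ to $\Delta\ss\varpi_\alpha$ assembles $\lll\varpi_\alpha$. I would then regroup: the principal contribution collapses to $-\varpi_\alpha\|B\cc\|\ngr^2$ (using $\|B\cc\|\ngr^2=\|S\cc\|\ngr^2+\|A\cc\|\ngr^2$ and $A\cc=\frac{1}{2}C\cc(\varpi\cd)|\ss$ from Lemma \ref{yuiq}), while the structural-constant terms, after invoking Lemma \ref{mk} to rewrite the $\varpi_\alpha|C\cc(\varpi\cd)\tau_1|^2$-type quantities and Lemma \ref{primavolta}(vii), telescope into $-\varpi_\alpha\sum_{\beta\in I\vv}\langle 2\,\qq(\varpi_\beta)-C(\varpi)\TB_\beta,\,C^\beta\nn\rangle$. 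Comparing with Notation \ref{415} yields $-\lll\varpi_\alpha=\varpi_\alpha\mathcal{B}\ts$, with both sides evaluated at $p$; since $p\in S\setminus C_S$ was arbitrary, this proves the lemma.

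I expect the main obstacle to be the bookkeeping in the middle step: tracking every divergence/commutator term so that none is silently dropped (precisely the type of omission corrected in Remark \ref{mistake}) and checking that the curvature terms and the ``$\MS$-constant'' terms cancel in exactly the right pattern. Once that combinatorics is organized, the collapse to $\varpi_\alpha\mathcal{B}\ts$ is forced by Lemmas \ref{mk} and \ref{primavolta}, and no new ideas beyond those already used for Lemma \ref{Doca} and Lemma \ref{suppfunct} are needed.
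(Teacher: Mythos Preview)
Your outline is workable, but the paper takes a shorter route. After fixing an NDF $f$ (so that $\tau_1(f)\equiv 1$, $\tau_\alpha(f)=\varpi_\alpha$, and in particular $\partial\varpi_\alpha/\partial\nn=X_\alpha(1)=0$), the paper does \emph{not} differentiate $\tau_j(\varpi_\alpha)$ directly in the $\HS$-frame. Instead it applies Lemma~\ref{ljjjkl} to pass from $\Delta\ss$ to the ambient horizontal Laplacian $\Delta\cc$, then uses that $X_\alpha$ commutes with $\Delta\cc$ and with ${\rm Hess}\cc$ (both being built from left-invariant horizontal vector fields) to write $\Delta\cc\varpi_\alpha=X_\alpha(\Delta\cc f)$ and $\langle{\rm Hess}\cc(\varpi_\alpha)\tau_1,\tau_1\rangle=\langle\nabla_{\tau_\alpha}({\mathcal J}\cc\tau_1)\tau_1,\tau_1\rangle$. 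A short manipulation of the Hessian term, together with Lemma~\ref{xzc}(i) for the NDF (which gives $\gc_{\tau_1}\tau_1=-C\cc(\varpi\cd)\tau_1$), produces exactly the extra first-order piece $\langle C\cc(\varpi\cd)\nn,\qq\varpi_\alpha\rangle$, and one arrives at the clean intermediate identity
\[
\lll\varpi_\alpha=-\varpi_\alpha\,\tau_1(\MS).
\]
At this point the paper simply \emph{quotes} the formula $\tau_1(\MS)=\mathcal{B}\ts$ already obtained in Step~3 of the second-variation proof, so nothing further has to be computed.

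Your approach---reading off $\tau_j(\varpi_\alpha)$ from Lemma~\ref{xzc}(ii), differentiating once more, and then regrouping via Lemmas~\ref{mk} and \ref{primavolta}---is essentially redoing that Step~3 calculation from scratch with the factor $\varpi_\alpha$ carried along. It should go through, but two points deserve care. First, the curvature terms you encounter when commuting $\nabla_{\tau_j}$ with $\nabla_{\tau_\alpha}$ are \emph{Riemannian} curvature terms (since $\tau_\alpha$ is vertical), so they are evaluated via Lemma~\ref{ricric} or Lemma~\ref{riccri}, not via $\RC\cc=0$; the horizontal flatness $\RC\cc=0$ is what is used in Lemma~\ref{Doca}, where both indices are horizontal. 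Second, the way the $\MS$-constant hypothesis enters is slightly different from Lemma~\ref{Doca}: after commuting you get $\tau_\alpha$ applied to (an extension of) $\MS$, and constancy of $\MS$ along $S$ gives $\TB_\alpha(\MS)=0$, hence $\tau_\alpha(\MS)=\varpi_\alpha\tau_1(\MS)$---which is precisely the factorization the paper exploits. So your route and the paper's converge at the same key identity; the paper just gets there in two lines by recycling prior work, while your direct computation would reproduce that prior work inline.
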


\begin{proof}
For the sake of simplicity, we shall assume that $f$ is a NDF; \rm see Definition \ref{NDF}. Let $\underline{\tau}$ be an adapted moving frame along $S$. We have $\grad\cc f=\tau_1$ (and hence $\tau_1(f)=1$) and
$\tau_\alpha f=\varpi_\alpha$ for every $\alpha\in I\vv$. We stress that $\frac{\partial\varpi_\alpha}{\partial\tau_1}=X_\alpha\left(\frac{\partial f}{\partial \tau_1 }\right)=X_\alpha(1)=0$. Using Lemma \ref{ljjjkl} yields
\begin{eqnarray*} \Delta\ss \varpi_\alpha &=& \Delta\cc\varpi_\alpha -\left\langle{\rm
Hess}\cc(\varpi_\alpha)\tau_1,\tau_1\right\rangle\\  &=& \Delta\cc\left(\tau_\alpha f \right) -\left\langle{\rm
Hess}\cc\left(\tau_\alpha f \right)\tau_1,\tau_1\right\rangle\\  &=& \tau_\alpha\left( \Delta\cc(f)\right)  -\left\langle\nabla_{\tau_\alpha}\left({\rm
Hess}\cc(f)\right)\tau_1,\tau_1\right\rangle\\  &=& \varpi_\alpha\tau_1\left( \Delta\cc(f)\right)  -\left\langle\nabla_{\tau_\alpha}\left({\rm
Hess}\cc(f)\right)\tau_1,\tau_1\right\rangle \\&=& -\varpi_\alpha\tau_1\left(\MS\right)  -\left\langle\nabla_{\tau_\alpha}\left({\mathcal J}\cc\tau_1\right)\tau_1,\tau_1\right\rangle.
\end{eqnarray*}Since $\left\langle\left({\mathcal J}\cc\tau_1\right)\tau_1,\tau_1\right\rangle=0$, we get that
$\left\langle\nabla_{\tau_\alpha}\left(\left({\mathcal J}\cc\tau_1\right)\tau_1\right),\tau_1\right\rangle=-\left\langle\left({\mathcal J}\cc\tau_1\right)\tau_1,\nabla_{\tau_\alpha}\tau_1\right\rangle$ and hence
$$\left\langle \left({\mathcal J}\cc\tau_1\right)\nabla_{\tau_\alpha}\tau_1,\tau_1\right\rangle+\left\langle\nabla_{\tau_\alpha}\left({\mathcal J}\cc\tau_1\right)\tau_1,\tau_1\right\rangle=-\left\langle\left({\mathcal J}\cc\tau_1\right)\tau_1,\nabla_{\tau_\alpha}\tau_1\right\rangle\qquad\forall\,\,\alpha\in I\vv.$$But since $\left\langle \left({\mathcal J}\cc\tau_1\right)\nabla_{\tau_\alpha}\tau_1,\tau_1\right\rangle=0$, we obtain
 $$ \left\langle\nabla_{\tau_\alpha}\left({\mathcal J}\cc\tau_1\right)\tau_1,\tau_1\right\rangle=-\left\langle\left({\mathcal J}\cc\tau_1\right)\tau_1,\nabla_{\tau_\alpha}\tau_1\right\rangle=-\left\langle\gc_{\tau_1}\tau_1,\grad\cc\varpi_\alpha\right\rangle.$$
By using (i) of Lemma \ref{xzc}, it follows that $\gc_{\tau_1}\tau_1=-C\cc(\varpi\cd)\tau_1$ and so, by adding the quantity
$\left\langle C\cc(\varpi\cd)\tau_1, \qq \varpi_\alpha\right\rangle$, we finally get the identity
$ \lll\varpi_\alpha=-\varpi_\alpha\tau_1\left(\MS\right)$.
The quantity $\tau_1\left(\MS\right)$ can  be obtained   by repeating the calculations made in the proof of the 2nd variation formula.  We have\begin{eqnarray*}&&-\tau_1\left(\MS\right)\\&&= \div\ss\left(C\cc(\varpi\cd)\tau_1\right)
 - \left|C\cc(\varpi\cd)\tau_1\right|^2+\|A\cc\|\ngr^2-\|S\cc\|\ngr^2  + \sum_{j\in I\ss \alpha \in I\vv}\left\langle C\cc^\alpha\tau_1,\tau_j\right\rangle\left( \left\langle C(\varpi)\tau_\alpha,\tau_j\right\rangle- {\tau_j( \varpi_\alpha)}\right)\\&&= -\|A\cc\|\ngr^2-\|S\cc\|\ngr^2   -\sum_{\alpha\in I\vv} \left\langle
 \left(2 \qq(\varpi_\alpha)-C(\varpi)\TB_\alpha\right),C^\alpha
\tau_1\right\rangle \\&&=-\mathcal{B}\ts.
\end{eqnarray*}
\end{proof}

In Section \ref{exf}, just as an exercise, we will reprove this identity  for
the class of non-vertical hyperplanes $$\mathcal
I_{\alpha'}:=\left\{p\equiv\exp\left(\sum_{j=1}^n x_j\right)\in\GG:
x_{\alpha'}=0\right\},$$ where $\alpha'\in I\vv$; see Definition \ref{hyppl}. For the sake of
simplicity, this will be done only for $2$-step Carnot groups.
We recall that these hyperplanes are very different from the
vertical ones and, for instance, they turn out to be
characteristic at the identity $0\in\GG$. We now state an immediate consequence of the previous lemma. To this aim, let $V\in\XG$ be a constant left invariant vector field.

\begin{corollario}\label{vnct2}Let $S\subset\GG$ be a $\cont^2$ hypersurface of constant horizontal mean
curvature. Then the function $f\vv:=\left\langle V,
\varpi\right\rangle$ satisfies the equation $-\lll f\vv=f\vv
\mathcal{B}\ts$ at each non-characteristic point of $S$.\end{corollario}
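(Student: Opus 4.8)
The plan is to obtain the statement as an immediate corollary of Lemma \ref{vnct} together with the $\R$-linearity of the operator $\lll$. First I would decompose $V$ along the left invariant frame: writing $V=\sum_{i=1}^{n}V_iX_i$, each coefficient $V_i:=\langle V,X_i\rangle$ is a real constant, since $V$ and $\underline{X}$ are both left invariant. Because $\varpi=\sum_{\alpha\in I\vv}\varpi_\alpha X_\alpha$ is a vertical vector field, only the vertical components of $V$ survive in the scalar product, so that
\[
f\vv=\langle V,\varpi\rangle=\sum_{\alpha\in I\vv}V_\alpha\,\varpi_\alpha
\]
at every non-characteristic point of $S$. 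In particular $f\vv$ is a finite $\R$-linear combination, with constant coefficients, of the functions $\varpi_\alpha$, $\alpha\in I\vv$, each of which lies in the domain of $\lll$ under the standing hypothesis that $\MS$ is constant.

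Next I would use that $\lll=\Delta\ss+\langle C\cc(\varpi\cd)\nn,\qq(\,\cdot\,)\rangle$ (see Definition \ref{Deflh}) is an $\R$-linear second order differential operator; hence it commutes with multiplication by the constants $V_\alpha$ and is additive, so that $\lll f\vv=\sum_{\alpha\in I\vv}V_\alpha\,\lll\varpi_\alpha$ pointwise on $S\setminus C_S$. Applying Lemma \ref{vnct}, which gives $-\lll\varpi_\alpha=\varpi_\alpha\,\mathcal{B}\ts$ for every $\alpha\in I\vv$, and factoring out $\mathcal{B}\ts$ yields
\[
-\lll f\vv=-\sum_{\alpha\in I\vv}V_\alpha\,\lll\varpi_\alpha=\Big(\sum_{\alpha\in I\vv}V_\alpha\,\varpi_\alpha\Big)\,\mathcal{B}\ts=f\vv\,\mathcal{B}\ts
\]
at each non-characteristic point of $S$, which is exactly the asserted identity.

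There is essentially no obstacle here: the proof is just linearity of $\lll$ combined with Lemma \ref{vnct}. The only points deserving a word of justification are that the scalar coefficients $V_\alpha$ are genuinely constant along $S$ — so that they pass unchanged through $\Delta\ss$ and $\qq$ — and that $f\vv$ has the regularity needed for $\lll f\vv$ to be defined, which it inherits from that of the $\varpi_\alpha$'s already used in Lemma \ref{vnct}. One may also note that taking $V\in\HH_2$ specializes the statement to $\langle V,\varpi\cd\rangle$, and that the same computation applies verbatim to each stratum.
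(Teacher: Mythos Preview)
Your argument is correct and is exactly the approach the paper intends: the corollary is stated as ``an immediate consequence'' of Lemma \ref{vnct}, and the derivation is precisely the linearity argument you give, writing $f\vv=\sum_{\alpha\in I\vv}V_\alpha\varpi_\alpha$ with constant coefficients and applying $-\lll\varpi_\alpha=\varpi_\alpha\,\mathcal{B}\ts$ term by term.
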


\section{Stability of $\HH$-minimal hypersurfaces}\label{SR1}
 
\begin{Defi}[Stability]\label{strdef}Let $\GG$ be a $k$-step Carnot group and let $S\subset\GG$ be a $\HH$-minimal
 hypersurface of class  $\cont^3$.
\begin{itemize}
\item[$(S_1)$]If $C_S=\emptyset$, we say that $S$ is  {\rm stable} if $II_{S}(\per)\geq 0$ for every
  $\cont^3$-smooth compactly supported  variation $\vartheta_t:]-\epsilon, \epsilon[\times S\longrightarrow\GG$.
 \item[$(S_2)$]
If $C_S\neq\emptyset$, we further assume that $\frac{1}{|\PH\nu|^2}\in
L^2(S, \sigma\rr^{n-1})$. Then, we say that $S$ is  {\rm stable} if $II_{S}(\per)\geq 0$ for every
  $\cont^3$-smooth compactly supported variation $\vartheta_t:]-\epsilon, \epsilon[\times S\longrightarrow\GG$  for which
  there exists
$h\in L^1\left( \UU; \sigma\rr^{n-1}\right)$ such that $\frac{1}{|\P\ct\nu^t|}\leq h$ for every $t\in]-\epsilon, \epsilon[$. 
\end{itemize}
\end{Defi} 
\begin{oss}
We shall sometimes  say that $S$ is \rm strictly stable \it when
the stability inequality is strict. If $C_S\neq\emptyset$, but we  use only compactly supported  variations on $S^\ast:=S\setminus C_S$, then  $(S_1)$  applies to any non-characteristic domain $\Om\Subset S^\ast$.
\end{oss}

\begin{lemma}Let $S\subset\GG$ be as in Definition \ref{strdef} and let us consider the following \rm linear eigenvalue problem, \it i.e. \begin{eqnarray*}\left\{\begin{array}{ll} \lll\varphi+\lambda\,\mathcal{B}\ts \varphi =0\quad \mbox{on}\,\,S\\\qquad\qquad\,\,\,\,\quad \varphi=0\quad \mbox{on}\,\,\partial S.\end{array}\right.\end{eqnarray*}Under the previous assumptions, a  sufficient condition for  stability  of $S$ is
 that the first (non-trivial) eigenvalue $\lambda_1$ of this problem is greater than or equal to $1$; see Notation \ref{415}.
\end{lemma}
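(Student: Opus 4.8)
The plan is to mimic the classical Riemannian argument (as in Fischer-Colbrie--Schoen \cite{FCS}) adapted to the sub-Riemannian quadratic form $II_S$. By Corollary \ref{corvar2stat}, for an $\HH$-minimal hypersurface $S$ and a compactly supported variation, the second variation reads
\[
II_S(W,\per)=\int_S\left(|\qq\cn|^2-\cn^2\,\mathcal B\ts\right)\per,
\]
where $\cn=\frac{\langle W\op,\nu\rangle}{|\P\cc\nu|}$ and $\mathcal B\ts$ is as in Notation \ref{415}. So stability amounts to nonnegativity of the functional $Q(\varphi):=\int_S\left(|\qq\varphi|^2-\varphi^2\,\mathcal B\ts\right)\per$ over admissible test functions $\varphi\in\cont^2\ss(S\setminus C_S)$ vanishing on $\partial S$ (with the integrability conditions of Definition \ref{strdef}).

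First I would recall that, by the min-max characterization of the first eigenvalue $\lambda_1$ of the problem $\lll\varphi+\lambda\,\mathcal B\ts\varphi=0$, $\varphi|_{\partial S}=0$, one has the Rayleigh-quotient inequality
\[
\lambda_1\,\int_S \mathcal B\ts\,\varphi^2\,\per\ \le\ \int_S |\qq\varphi|^2\,\per
\]
for every admissible $\varphi$ vanishing on $\partial S$, \emph{provided} $\int_S\mathcal B\ts\varphi^2\per\ge 0$ (if that integral is nonpositive the inequality $Q(\varphi)\ge 0$ is immediate). This uses the integration-by-parts identity \eqref{eq} of Corollary \ref{GDc} to rewrite $\int_S|\qq\varphi|^2\per=-\int_S\varphi\,\lll\varphi\,\per$, which is exactly where the admissibility hypotheses on $\varphi$ (and the assumption $\frac{1}{|\P\cc\nu|^2}\in L^2(S;\per)$ when $C_S\neq\emptyset$) are needed to justify the self-adjoint spectral setup on $S\setminus C_S$. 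Then, if $\lambda_1\ge 1$, we get $\int_S\mathcal B\ts\varphi^2\per\le\int_S|\qq\varphi|^2\per$, i.e. $Q(\varphi)\ge 0$, so $S$ is stable. This is the entire content of the lemma, and the proof is essentially a one-line deduction once the variational/spectral framework is in place.

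The step I expect to require the most care is the \emph{functional-analytic justification of the min-max characterization} on the possibly-incomplete, possibly-characteristic manifold $S\setminus C_S$: one must verify that the bilinear form $Q$ is densely defined, that the admissible compactly supported test functions of Definition \ref{strdef} form the correct form domain, and that $\lambda_1$ defined via $\inf Q(\varphi)/\int\mathcal B\ts\varphi^2\per$ indeed dominates the Rayleigh quotient for all admissible $\varphi$. This is standard in the Riemannian case but here the degeneracy of $\per$ near $C_S$ and the singular behavior of $\frac{1}{|\P\cc\nu|}$ force us to invoke exactly the integrability hypotheses $(S_1)$--$(S_2)$ and Remark \ref{remdiff} to stay inside $W\ss^{1,2}(S,\per)$. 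Everything else—the rewriting of $II_S$ via Corollary \ref{corvar2stat} and the algebraic comparison $\lambda_1\ge 1\Rightarrow Q\ge 0$—is routine.

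Concretely, the proof I would write is: \emph{Proof.} By Corollary \ref{corvar2stat}, for any admissible compactly supported variation with variation vector $W$ one has $II_S(W,\per)=\int_S(|\qq\cn|^2-\cn^2\,\mathcal B\ts)\,\per$ with $\cn=\frac{\langle W\op,\nu\rangle}{|\P\cc\nu|}$. If $\int_S\mathcal B\ts\,\cn^2\,\per\le 0$ there is nothing to prove, so assume it is positive. By the variational characterization of the first eigenvalue $\lambda_1$ of the stated eigenvalue problem, applied to the admissible test function $\cn$ (which by Remark \ref{remdiff} lies in $W\ss^{1,2}(S,\per)$ and vanishes on $\partial S$), and using \eqref{eq} of Corollary \ref{GDc} to integrate by parts, we obtain
\[
\lambda_1\ \le\ \frac{\int_S|\qq\cn|^2\,\per}{\int_S\mathcal B\ts\,\cn^2\,\per}.
\]
Hence $\lambda_1\ge 1$ gives $\int_S|\qq\cn|^2\,\per\ge\int_S\mathcal B\ts\,\cn^2\,\per$, i.e. $II_S(W,\per)\ge 0$. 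Since $W$ was an arbitrary admissible compactly supported variation vector, $S$ is stable in the sense of Definition \ref{strdef}. $\qed$
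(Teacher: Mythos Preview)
Your proposal is correct and follows essentially the same approach as the paper: the paper's own proof is a single sentence (``This is an immediate consequence of the horizontal Green formula \eqref{eq}; see Corollary \ref{GDc}''), and your argument simply unpacks what that sentence means---namely, use the Rayleigh-quotient characterization of $\lambda_1$ together with the integration-by-parts identity $-\int_S\varphi\,\lll\varphi\,\per=\int_S|\qq\varphi|^2\,\per$ to compare $\int_S|\qq\cn|^2\,\per$ with $\int_S\mathcal B\ts\,\cn^2\,\per$. Your case split on the sign of $\int_S\mathcal B\ts\,\cn^2\,\per$ and your attention to the functional-analytic issues near $C_S$ are reasonable elaborations that the paper leaves implicit.
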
\begin{proof}This is an immediate consequence of the horizontal Green formula \eqref{eq}; see Corollary \ref{GDc}.

\end{proof}

\begin{lemma}\label{zxc}Let $S\subset\GG$ be a hypersurface of class $\cont^2$. Let $\Om\subset S^\ast=S\setminus C_S$ be a  bounded non-characteristic domain and let $q\in\cont(\Om)$. If there exists a smooth  function $\psi>0$ on $\Om$ satisfying the equation $\lll \psi= q \psi$, then \begin{equation}\label{ininin}\int_{\Om}\left(|\qq \varphi|^2+ q \varphi^2\right)\,\per\geq 0\end{equation}
for all smooth function $\varphi$ compactly supported on $\Om$.
\end{lemma}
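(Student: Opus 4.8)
The plan is to carry out, in the horizontal tangential calculus, the classical argument of Fischer--Colbrie and Schoen that converts the existence of a positive solution of $\lll\psi=q\psi$ into a sign for the quadratic form. Given a smooth function $\varphi$ compactly supported in $\Om$, I would set $u:=\varphi/\psi$, so that $\varphi=\psi\,u$; since $\psi$ is continuous and strictly positive on $\Om$ and $\mathrm{supp}\,\varphi\Subset\Om$, the function $u$ is smooth and compactly supported in $\Om$. The Leibniz rule for the $\HS$-gradient gives $\qq\varphi=u\,\qq\psi+\psi\,\qq u$, whence
\[
|\qq\varphi|^2=u^2|\qq\psi|^2+2u\psi\,\langle\qq\psi,\qq u\rangle+\psi^2|\qq u|^2 ,
\]
and the hypothesis $\lll\psi=q\psi$ yields $q\,\varphi^2=q\,\psi^2u^2=u^2\psi\,\lll\psi$. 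Adding these two identities and integrating,
\[
\int_{\Om}\bigl(|\qq\varphi|^2+q\,\varphi^2\bigr)\,\per=\int_{\Om}\psi^2|\qq u|^2\,\per+\int_{\Om}\Bigl(u^2|\qq\psi|^2+2u\psi\,\langle\qq\psi,\qq u\rangle+u^2\psi\,\lll\psi\Bigr)\,\per ,
\]
so everything reduces to showing that the last integral vanishes.

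This is an integration by parts. I would apply it to the $\HS$-tangent vector field $X:=u^2\psi\,\qq\psi$, which is of class $\cont^1$ and compactly supported in $\Om\subset S^\ast$. Using the Leibniz identity $\lh(g\,Y)=g\,\lh Y+\langle\qq g,Y\rangle$ from Definition \ref{Deflh} with $g=u^2\psi$ and $Y=\qq\psi$, together with $\lh(\qq\psi)=\lll\psi$ and $\qq(u^2\psi)=2u\psi\,\qq u+u^2\,\qq\psi$, one obtains exactly
\[
\lh X=u^2|\qq\psi|^2+2u\psi\,\langle\qq u,\qq\psi\rangle+u^2\psi\,\lll\psi .
\]
Since $X$ extends by zero to an element of $\cont^1_0(S,\HS)$, formula \eqref{fipp} (equivalently Theorem \ref{GD} applied on $\overline{\Om}$, the boundary term vanishing because $\mathrm{supp}\,u\Subset\Om$) gives $\int_{\Om}\lh X\,\per=0$. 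Substituting this back leaves
\[
\int_{\Om}\bigl(|\qq\varphi|^2+q\,\varphi^2\bigr)\,\per=\int_{\Om}\psi^2|\qq u|^2\,\per\ \ge\ 0 ,
\]
which is \eqref{ininin}.

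The computation itself is elementary, so the only step that requires care is the legitimacy of the horizontal divergence formula for $X=u^2\psi\,\qq\psi$. Here this is unproblematic: $\Om$ is a non-characteristic domain, so $\per$ and $\qq\psi$ are regular throughout $\Om$, and $u$ is compactly supported inside $\Om$, so no boundary contribution and no contribution from the characteristic set arise; the admissibility conditions of Definition \ref{adm} are satisfied trivially. This is precisely the mechanism that, applied to the positive solution $\psi=\varpi_\alpha$ supplied by Lemma \ref{vnct}, will produce the stability of $\HH$-minimal hypersurfaces in Section \ref{SR1}.
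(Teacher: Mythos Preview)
Your proof is correct. It differs from the paper's argument in the choice of substitution: the paper sets $\phi:=\log\psi$, computes $\lll\phi=q-|\qq\phi|^2$, multiplies by $-\varphi^2$, integrates by parts, and then closes with the pointwise Young inequality $2|\varphi\langle\qq\varphi,\qq\phi\rangle|\le \varphi^2|\qq\phi|^2+|\qq\varphi|^2$; this yields \eqref{ininin} only as an inequality. You instead use the multiplicative substitution $u=\varphi/\psi$ and the divergence formula applied to $X=u^2\psi\,\qq\psi$, obtaining the exact identity
\[
\int_{\Om}\bigl(|\qq\varphi|^2+q\varphi^2\bigr)\,\per=\int_{\Om}\psi^2\,|\qq u|^2\,\per,
\]
from which nonnegativity is immediate. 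Both routes are standard variants of the Fischer--Colbrie--Schoen mechanism and rely on the same integration-by-parts ingredient (formula \eqref{fipp} on the non-characteristic domain $\Om$); your version has the mild advantage of identifying the defect term explicitly, while the paper's logarithmic version avoids dividing by $\psi$ and works with $\varphi$ directly.
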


This lemma generalizes a well-known result in the Riemannian setting; see \cite{FCS}.

\begin{proof}[Proof of Lemma \ref{zxc}]If $\psi>0$ satisfies $\lll \psi= q \psi$ on $\Om$, let us define a new function $\phi:=\log \psi$.
By an elementary calculation we see that $\lll\phi = q- |\qq\phi|^2$. More precisely, we have
\begin{eqnarray*}\lll\phi&=&\div\ss\left(\qq\phi\right)+\left\langle C\cc(\varpi\cd)\nn, \qq\phi\right\rangle\\
&=&\div\ss\left(\dfrac{\qq\psi}{\psi}\right)+ \left\langle C\cc(\varpi\cd)\nn,\dfrac{\qq\psi}{\psi}\right\rangle
\\&=&\left(\dfrac{\Delta\ss\psi}{\psi}\left\langle C\cc(\varpi\cd)\nn,\dfrac{\qq\psi}{\psi}\right\rangle\right)-
\dfrac{|\qq\psi|^2}{\psi^2}\\&=& \dfrac{\lll\psi}{\psi}-{|\qq\phi|^2}
\\&=& q-{|\qq\phi|^2}.
\end{eqnarray*}
So let $\varphi$ be a smooth function with compact support on $\Om$. Multiplying by $-\varphi^2$ both sides of this equation and integrating by parts, yields
\begin{eqnarray}\label{jkhgd}-\int_{\Om}\varphi^2\left(q-|\qq\phi|^2\right)\,\per=-\int_{\Om}\varphi^2\lll\phi\,\per=
\int_{\Om}2\varphi\left\langle\qq\varphi,\qq\phi\right\rangle\,\per,
\end{eqnarray}where  we have used Corollary \ref{GDc}.
Note that\begin{equation}\label{inni}
2|\varphi\left\langle\qq\varphi,\qq\phi\right\rangle|\leq
2|\varphi||\qq\varphi||\qq\phi|\leq
|\varphi|^2|\qq\phi|^2+|\qq\varphi|^2.
\end{equation}Hence, \eqref{ininin} follows  inserting \eqref{inni}  into \eqref{jkhgd} and 
canceling the terms $\int_{\Om}\varphi^2|\qq\phi|^2\,\per$.
\end{proof}

As a consequence of Lemma \ref{vnct} and  Lemma \ref{zxc}, we  infer an interesting  condition for stability.

\begin{teo} \label{MRS}
Let $S\subset\GG$ be a
$\HH$-minimal hypersurface of class $\cont^3$. If there exists $\alpha\in I\vv$ such that either $\varpi_\alpha>0$ or $\varpi_\alpha<0$ on $S$, then each non-characteristic domain $\Om\subset 
S^\ast$ turns out to be stable. \end{teo}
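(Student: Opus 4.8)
The strategy is to combine the identity from Lemma \ref{vnct} with the Riemannian-type stability criterion of Lemma \ref{zxc}. Since $S$ is $\HH$-minimal, Corollary \ref{corvar2stat} gives
\[
II_S(W,\per)=\int_{S}\left(|\qq\cn|^2-\cn^2\mathcal B\ts\right)\,\per
\]
for compactly supported variations, where $\cn=\frac{\langle W\op,\nu\rangle}{|\PH\nu|}$. Thus, to prove stability of a non-characteristic domain $\Om\Subset S^\ast$, it suffices to show
\[
\int_{\Om}\left(|\qq\varphi|^2-\varphi^2\mathcal B\ts\right)\,\per\geq 0
\]
for every smooth $\varphi$ compactly supported in $\Om$. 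Here I would apply Lemma \ref{zxc} with $q:=-\mathcal B\ts\in\cont(\Om)$ (note $\Om$ is non-characteristic, so $\mathcal B\ts$ is continuous there): the conclusion follows once we exhibit a strictly positive smooth function $\psi$ on $\Om$ solving $\lll\psi=q\psi$, i.e. $\lll\psi+\mathcal B\ts\,\psi=0$.

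\textbf{The key step} is producing this positive solution. This is exactly what Lemma \ref{vnct} supplies: for any $\alpha\in I\vv$, the function $\varpi_\alpha$ satisfies $-\lll\varpi_\alpha=\varpi_\alpha\mathcal B\ts$ at each non-characteristic point, i.e. $\lll\varpi_\alpha+\mathcal B\ts\,\varpi_\alpha=0$. By hypothesis there is an $\alpha\in I\vv$ with $\varpi_\alpha>0$ everywhere on $S$ (or $\varpi_\alpha<0$ everywhere, in which case one replaces it by $-\varpi_\alpha$, which satisfies the same linear equation). Since $S$ is of class $\cont^3$, the NDF argument used in the proof of Lemma \ref{vnct} shows $\varpi_\alpha$ is $\cont^1$; one checks it is in fact smooth enough on $S^\ast$ to serve as the test function $\psi$ required by Lemma \ref{zxc} — the regularity needed is only that $\psi$ be smooth and positive on the domain $\Om$, and on $S^\ast$ the components of $\nn$ and hence $\varpi_\alpha=\nu_\alpha/|\PH\nu|$ inherit the regularity of $S$. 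Setting $\psi:=\varpi_\alpha|_\Om>0$, Lemma \ref{zxc} with $q=-\mathcal B\ts$ yields the desired inequality.

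\textbf{The main obstacle} is bookkeeping rather than conceptual: one must confirm that the integrability/regularity conditions required by Corollary \ref{corvar2stat} (equivalently Theorem \ref{2vg}) are met. Since we restrict to compactly supported variations in a \emph{non-characteristic} domain $\Om\Subset S^\ast$, the characteristic set plays no role: the functions $\frac{1}{|\PH\nu|}$, $\frac{1}{|\PH\nu|^2}$ are bounded on $\overline\Om$, so hypotheses (i) and (iii) of Theorem \ref{2vg} are automatic, and hypothesis (ii) ($\MS$ constant) holds since $\MS\equiv 0$. Thus the second variation formula of Corollary \ref{corvar2stat} applies on $\Om$, and the argument closes. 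Concretely, I would: (1) invoke Corollary \ref{corvar2stat} to reduce stability to the quadratic-form inequality on $\Om$; (2) set $q=-\mathcal B\ts$ and $\psi=\pm\varpi_\alpha>0$; (3) verify via Lemma \ref{vnct} that $\lll\psi=q\psi$ on $\Om\subset S^\ast$; (4) apply Lemma \ref{zxc} to conclude $\int_\Om(|\qq\varphi|^2+q\varphi^2)\,\per\geq 0$, hence $II_\Om(W,\per)\geq 0$ for all admissible compactly supported $W$, which is precisely stability of $\Om$ in the sense of Definition \ref{strdef}.
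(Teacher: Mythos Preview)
Your proposal is correct and follows essentially the same route as the paper: identify $\psi=\pm\varpi_\alpha>0$ as a positive solution of $\lll\psi+\mathcal B\ts\psi=0$ via Lemma~\ref{vnct}, then invoke Lemma~\ref{zxc} with $q=-\mathcal B\ts$ to obtain the stability inequality for compactly supported variations on the non-characteristic domain $\Om$. The paper's proof is a one-line reference to Lemma~\ref{zxc}; your version simply spells out the reduction through Corollary~\ref{corvar2stat} and the verification of its hypotheses on $\Om\Subset S^\ast$, which is the same argument with the bookkeeping made explicit.
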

\begin{proof}By applying Lemma \ref{zxc} to the function $\varpi_\alpha$ we immediately get the stability inequality $$II_{S}(W,\per)\geq 0$$  for every
non-zero compactly supported  variation $\vartheta_t$  of
$S$.
 \end{proof}
We have the following reformulations of Theorem \ref{MRS}:
 \begin{corollario}Let $S\subset\GG$ be a
 $\HH$-minimal hypersurface of class $\cont^3$.  Let $V\in\XG$ be a
constant left invariant vector field and set
$f\vv=\left\langle V, \varpi\right\rangle$.  If either $f\vv>0$  or $f\vv<0$, then  each non-characteristic domain $\Om\subset S^\ast$ is stable.
\end{corollario}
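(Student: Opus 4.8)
The plan is to argue exactly as in the proof of Theorem~\ref{MRS}, with the single component $\varpi_\alpha$ replaced by the linear combination $f\vv=\langle V,\varpi\rangle$. First I would observe that, since the $\HS$-Laplacian $\lll$ is linear and $\varpi=\sum_{\alpha\in I\vv}\varpi_\alpha X_\alpha$, writing $V=\sum_{\alpha\in I\vv}v_\alpha X_\alpha$ with constants $v_\alpha$ gives $f\vv=\sum_{\alpha\in I\vv}v_\alpha\varpi_\alpha$, so that Corollary~\ref{vnct2} (itself a consequence of Lemma~\ref{vnct}) already records the identity $-\lll f\vv=f\vv\,\mathcal{B}\ts$ at every non-characteristic point of $S$. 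Hence, on any non-characteristic domain $\Om\subset S^\ast$, the function $f\vv$ is a smooth solution of $\lll\psi=q\psi$ with potential $q:=-\mathcal{B}\ts\in\cont(\Om)$.

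Next I would reduce to the positive case: if $f\vv<0$ on $S$, replace $V$ by $-V$, so that $f\vv$ becomes $-f\vv>0$ and, by the same linearity, still satisfies $-\lll f\vv=f\vv\,\mathcal{B}\ts$ with the same potential. Assuming then $\psi:=f\vv>0$ on $\Om$ and applying Lemma~\ref{zxc} with this $\psi$ and $q=-\mathcal{B}\ts$ yields
\begin{equation*}
\int_{\Om}\left(|\qq\varphi|^2-\mathcal{B}\ts\,\varphi^2\right)\,\per\ \geq\ 0
\end{equation*}
for every smooth function $\varphi$ compactly supported in $\Om$.

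It remains to identify the left-hand side with the second variation. Let $\vartheta$ be a $\cont^3$-smooth variation of $S$ compactly supported in $\Om\Subset S^\ast$, with variation vector $W$ and $\cn=\langle W\op,\nu\rangle/|\P\cc\nu|$. Since $\Om$ is non-characteristic, $\cn$ is of class $\cont^2$ and compactly supported in $\Om$, hence admissible in the sense of Definition~\ref{adm}, and Corollary~\ref{corvar2stat} gives $II_S(W,\per)=\int_{\Om}\left(|\qq\cn|^2-\cn^2\mathcal{B}\ts\right)\,\per$. Approximating $\cn$ in the $W\ss^{1,2}(\Om;\per)$-norm by smooth compactly supported functions (legitimate on the non-characteristic set $\Om$, where $\mathcal{B}\ts$ is bounded and all integrals converge) and passing to the limit in the displayed inequality gives $II_S(W,\per)\geq0$; by Definition~\ref{strdef}, $\Om$ is stable. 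The only mildly delicate point is this last matching of regularity classes between Corollary~\ref{corvar2stat} and Lemma~\ref{zxc}, which is a routine density argument and presents no genuine obstacle: the whole substance of the statement is already contained in Lemma~\ref{vnct} (the explicit solutions of $\lll\varphi+\varphi\,\mathcal{B}\ts=0$) and in the argument of Lemma~\ref{zxc}, both established above.
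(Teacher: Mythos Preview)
Your proof is correct and follows exactly the approach the paper intends: the corollary is presented there as an immediate ``reformulation'' of Theorem~\ref{MRS}, and your argument makes this explicit by invoking Corollary~\ref{vnct2} (the linear combination of the identities in Lemma~\ref{vnct}) and then applying Lemma~\ref{zxc} with $\psi=f\vv$, just as the proof of Theorem~\ref{MRS} applies it with $\psi=\varpi_\alpha$. The density argument you add at the end is more careful than the paper, which simply asserts the stability inequality without comment; on a non-characteristic domain $\Om$ the function $\cn$ is already of class $\cont^2$ and compactly supported, so the step is routine (and indeed Lemma~\ref{zxc} only uses integration by parts via Corollary~\ref{GDc}, which works for such $\varphi$).
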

\begin{corollario}Let  $S\subset \GG$  be a complete $\HH$-minimal hypersurface of class $\cont^3$. If $S$ is a graph with respect to a given vertical direction, then each non-characteristic domain $\Om\subset S^\ast$ is stable.
\end{corollario}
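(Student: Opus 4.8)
The plan is to reduce the statement to Theorem \ref{MRS}. Let $X_{\alpha'}$, with $\alpha'\in I\vv$, be the left-invariant vector field generating the \emph{vertical direction} with respect to which $S$ is a graph; in exponential coordinates the associated hyperplane is $\mathcal I_{\alpha'}=\{x_{\alpha'}=0\}$, and being a graph in this direction means precisely that $S$ meets every coset of the one-parameter subgroup $\{\exp(sX_{\alpha'}):s\in\R\}$ exactly once and transversally. For a top-stratum direction (e.g.\ the centre direction $T$ in a Heisenberg group) one has $X_{\alpha'}=\partial_{x_{\alpha'}}$, so a defining function $f=x_{\alpha'}-u$ satisfies $X_{\alpha'}f\equiv 1$; in general it is exactly this transversality of $X_{\alpha'}$ to $S$ that is needed.

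First I would observe that, since $\nu$ is the Riemannian unit normal along $S$ and $\underline{X}=\{X_1,\dots,X_n\}$ is orthonormal, $\nu_{\alpha'}=\langle\nu,X_{\alpha'}\rangle$, and $\nu_{\alpha'}(p)=0$ if and only if $X_{\alpha'}(p)\in\TT_pS$. Hence the graph (transversality) hypothesis yields $\nu_{\alpha'}\neq 0$ at every point of $S$; being continuous on the connected hypersurface $S$, $\nu_{\alpha'}$ has a constant sign (if $S$ is disconnected, one argues on each connected component). On $S^\ast=S\setminus C_S$ one has $|\PH\nu|>0$, so $\varpi_{\alpha'}=\nu_{\alpha'}/|\PH\nu|$ is well defined, continuous, and of constant sign on $S^\ast$: either $\varpi_{\alpha'}>0$ or $\varpi_{\alpha'}<0$ there.

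At this point Theorem \ref{MRS}, applied with this index $\alpha'$, gives directly that every non-characteristic domain $\Om\subset S^\ast$ is stable, which is the assertion. (Alternatively, one may combine Corollary \ref{vnct2} with $V=X_{\alpha'}$, which gives $-\lll f\vv=f\vv\,\mathcal{B}\ts$ for $f\vv=\langle X_{\alpha'},\varpi\rangle=\varpi_{\alpha'}$, and then feed the sign-definite function $\varpi_{\alpha'}$ into Lemma \ref{zxc}.) The only point requiring care is the first one: making precise, in a $k$-step group, that \textquotedblleft being a graph with respect to a vertical direction\textquotedblright\ forces $X_{\alpha'}$ to be nowhere tangent to $S$; the completeness assumption is used here, to guarantee that the graph is global, i.e.\ that $S$ is an entire graph over $\mathcal I_{\alpha'}$ (hence connected). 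Once transversality is established the argument is immediate and carries no further analytic difficulty.
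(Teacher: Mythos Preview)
Your proposal is correct and follows exactly the approach the paper intends: the paper presents this corollary (together with the preceding one) simply as a ``reformulation'' of Theorem~\ref{MRS} without further proof, and your argument spells out the missing link---namely that the graph hypothesis forces $\nu_{\alpha'}$ to be nowhere vanishing and hence of constant sign, so that $\varpi_{\alpha'}$ has a sign on $S^\ast$ and Theorem~\ref{MRS} applies directly.
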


Below  we shall study some (more or less simple) examples in order to illustrate some of our results.

\subsection{Examples}\label{exf}

Our first example, which is that of \it vertical hyperplanes, \rm is the simplest one and, to the best of our knowledge, the only known in literature outside the Heisenberg group setting. Roughly speaking, vertical hyperplanes are level-sets of linear homogeneous polynomial having (homogeneous) degree 1, which are ideals of the Lie algebra $\gg$.

 We claim that they are (strictly) stable hypersurfaces. This immediately follows from the fact that ${\mathcal B}\ts=0$. Hence, for any regular bounded domain $\UU$ contained on a vertical hyperplane $\mathcal I$, we have
$II_\UU(W,\per)=\int_\UU|\qq\cn|^2\,\per\geq 0$, with equality if, and only if,  $\cn=0$.
\begin{corollario}\label{c0}  Let $\GG$ be a $k$-step Carnot group. A vertical hyperplane is a $\cin$-smooth non-characteristic  strictly stable $\HH$-minimal hypersurface.
\end{corollario}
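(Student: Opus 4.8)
The plan is to verify the two assertions contained in Corollary \ref{c0}: that a vertical hyperplane $\mathcal I\subset\GG$ is a smooth $\HH$-minimal hypersurface, and that it is non-characteristic and strictly stable. First I would recall that, by Definition \ref{hyppl}, $\mathcal I=\{x\in\GG: \ell(x)=0\}$ for a linear homogeneous polynomial $\ell$ of homogeneous degree $1$; thus $\ell$ depends only on the layer-one coordinates $x_1,\dots,x_\DH$, say $\ell=\sum_{i=1}^\DH a_i x_i$ with $(a_1,\dots,a_\DH)\neq 0$. Since $\ell$ is linear in the exponential coordinates, $\mathcal I$ is a linear subspace, hence a $\cin$-smooth embedded hypersurface. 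I would take $\ell$ as a defining function in the sense of Definition \ref{NDF}: a short computation shows $X_i\ell = a_i$ for $i\in I\cc$ (using $X_i(0)=\partial_{x_i}$ and the polynomial structure; in fact the layer-one vector fields differentiate the layer-one linear form to the constant $a_i$), and $X_\alpha\ell=0$ for $\alpha\in I\vv$ because the vertical left-invariant fields carry a lower-order polynomial factor and kill the degree-$1$ part. Hence $\grad\cc\ell=\sum_{i\in I\cc}a_iX_i$ is a nonzero constant horizontal vector field, so $|\grad\cc\ell|\equiv\big(\sum a_i^2\big)^{1/2}\neq 0$; normalizing gives a NDF and shows $\mathcal I$ is everywhere non-characteristic, with unit horizontal normal $\nn=\frac{1}{|a|}\sum_{i\in I\cc}a_iX_i$ a constant horizontal vector field and $\varpi\equiv 0$.

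Next I would compute the horizontal second fundamental form and the quantity $\mathcal B\ts$. Since $\nn$ is a constant left-invariant horizontal vector field, $\gc_X\nn=\PH(\nabla_X\nn)$; writing $\nn=\sum_i \nn_i X_i$ with constant coefficients $\nn_i$, we get $\nabla_X\nn=\sum_i\nn_i\nabla_X X_i$, and since the $\gc_{X_i}X_j=0$ (flatness of the $\HH$-connection), $\gc_X\nn=0$ for all $X\in\HS$. Therefore $B\cc\equiv 0$, so $S\cc=A\cc=0$ and in particular $\MS=\mathrm{Tr}\,B\cc=0$, i.e. $\mathcal I$ is $\HH$-minimal. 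Moreover $\varpi\equiv 0$ forces $C(\varpi)=0$, $\varpi\cd\equiv0$, and $\qq\varpi_\alpha\equiv 0$ for all $\alpha\in I\vv$; plugging all of this into the definition \eqref{tranx} of $\mathcal B\ts$ in Notation \ref{415} gives $\mathcal B\ts=\|S\cc\|\ngr^2+\|A\cc\|\ngr^2+\sum_{\alpha\in I\vv}\langle 2\qq(\varpi_\alpha)-C(\varpi)\TB_\alpha,\,C^\alpha\nn\rangle=0$.

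Finally I would invoke the second variation formula. By Corollary \ref{corvar2stat}, for any compactly supported $\cont^3$ variation with variation vector $W$ and $\cn=\frac{\langle W\op,\nu\rangle}{|\PH\nu|}$, one has $II_S(W,\per)=\int_S\big(|\qq\cn|^2-\cn^2\mathcal B\ts\big)\,\per=\int_S|\qq\cn|^2\,\per\ge 0$. (Here the hypotheses of Theorem \ref{2vg}/Corollary \ref{corvar2stat} are trivially met: $C_{\mathcal I}=\emptyset$ and $\MS=0$ is constant.) This proves stability; for strictness, note equality holds iff $\qq\cn\equiv 0$ on the (connected, bounded) domain $\UU$, hence $\cn$ is constant, and since $\cn$ has compact support on $\UU$ it must vanish identically, i.e. the normal component of $W$ along $S$ is zero — a trivial variation in the relevant sense. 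I expect no serious obstacle here; the only point requiring a little care is the clean verification that the layer-one vector fields act as constant derivations on a degree-$1$ linear form while the higher-layer fields annihilate it, which is what simultaneously yields smoothness of $\nn$, the non-characteristic property, and $\varpi\equiv 0$.
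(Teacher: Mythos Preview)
Your proof is correct and follows the paper's approach, which simply asserts $\mathcal{B}\ts=0$ for vertical hyperplanes in the paragraph preceding the corollary; you supply the explicit verification that $\nn$ is a constant horizontal left-invariant field, $\varpi\equiv 0$, and hence $B\cc\equiv 0$ and $\mathcal{B}\ts=0$. One small caveat on the strictness step: the implication ``$\qq\cn\equiv 0\Rightarrow\cn$ constant'' is not valid in general, since $\HS$ need not be bracket-generating on $\mathcal I$ (e.g.\ on a vertical plane in $\mathbb H^1$ the distribution $\HS$ is one-dimensional and its brackets vanish); the correct reason $\cn\equiv 0$ is that $\cn$ is constant along the integral curves of each left-invariant $\tau_j\in\HS$, these curves are complete (hence unbounded) in $\mathcal I$, and $\cn$ has compact support.
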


Now we analyze a completely different family of hyperplanes. From an intrinsic point of view, they are homogeneous ``cones'', which turn out to be characteristic at a single point. For the sake of simplicity, we just consider the  case of  2-step Carnot groups. We have  $\gg=\HH\oplus\VV\,\,(\dim\HH=\DH,\,\dim\VV=n-\DH)$. Let us assume that $$X_i(x):=\ee_i+\frac{1}{2}\sum_{\alpha\in I\vv}\left\langle C^\alpha\cc\ee_i, x\cc\right\rangle\ee_\alpha,\qquad X_\alpha=\ee_\alpha$$
for every $i\in I\cc=\{1,...,\DH\}$ and every $\alpha\in I\vv=\{\DH+1,...,n\}$, where $\ee_j=(0,...,\underbrace{1}_{j-th place},...0),\, j=1...,n$, is the $j$-th vector of the canonical basis of $\Rn\cong\gg$ and  $x\cc\equiv(x_1,...,x_\DH)$ is the horizontal position vector. As usual, we identify vector fields and differential operators.

Fix $\alpha'\in I\vv$ and  consider the \it non-vertical hyperplane \rm  $\mathcal I_{\alpha'}:=\left\{x=\esp\left(\sum_j x_j\right)\in\GG:   x_{\alpha'}=0\right\}.$
We have $\grad\cc x_{\alpha'}=-\frac{1}{2}C\cc^{\alpha'} x\cc$ and so $\nn=\frac{-C\cc^{\alpha'} x\cc}{|C\cc^{\alpha'} x\cc|}$. Moreover, $\varpi_\beta=0$ for all $\beta\neq\alpha'$ and $\varpi_{\alpha'} = \frac{2}{|C\cc^{\alpha'} x\cc|}$.
Since $$\div\cc \left(C\cc^{\alpha'}x\cc\right)=\sum_{j\in I\cc}\left\langle\nabla_{X_j}C\cc^{\alpha'}x\cc, X_j\right\rangle=\sum_{j\in I\cc}\left\langle C\cc^{\alpha'} X_j, X_j\right\rangle=0$$and $$\left\langle\grad\cc\left(\dfrac{1}{|C\cc^{\alpha'}x\cc|}\right), C\cc^{\alpha'} x\cc\right\rangle=-\left\langle \dfrac{\grad\cc|C\cc^{\alpha'}x\cc|}{|C\cc^{\alpha'}x\cc|^2}, C\cc^{\alpha'} x\cc\right\rangle=\left\langle\dfrac{C\cc^{\alpha'}\nn}{|C\cc^{\alpha'}x\cc|}, \nn\right\rangle=0,$$it follows that $\MH=-\div\cc\nn=0$, i.e.  $\mathcal I_{\alpha'}$ is {\it $\HH$-minimal}. The above calculation also shows that $ \grad\cc\left({|C\cc^{\alpha'}x\cc|}\right) =C\cc^{\alpha'}\nn$. Furthermore, we easily get that
\begin{eqnarray*}-\mathcal{J}\cc\nn= \dfrac{C\cc^{\alpha'}+\nn\otimes C\cc^{\alpha'}\nn}{|C\cc^{\alpha'} x\cc|},
\end{eqnarray*}which, in turn,  implies $$B\cc(\tau_i, \tau_j)=\left\langle \dfrac{C\ss^{\alpha'}}{|C\cc^{\alpha'} x\cc|}\tau_i, \tau_j\right\rangle=A\cc(\tau_i, \tau_j)\qquad \forall\,\,i, j\in I\ss.$$Therefore  $S\cc=\bf 0\cc$ (i.e. the 0-matrix on $\HH$) and
  $\|B\cc\|\ngr^2=\|A\cc\|\ngr^2=\dfrac{\varpi_{\alpha'}^2\|C\ss^{\alpha'}\|\ngr^2}{4}$.
 It remains to compute the quantity $\Upsilon:= -\sum_{\alpha\in I\vv} \left\langle
 \left(2 \qq(\varpi_\alpha)-C(\varpi)\TB_\alpha\right),C^\alpha
\tau_1\right\rangle$; see formula \eqref{tranx}. Since we are in a $2$-step group, we have
\begin{eqnarray*}  \Upsilon= -\sum_{\alpha\in I\vv} \left\langle
 \left(2 \qq(\varpi_\alpha)+\varpi_\alpha C(\varpi)\tau_1\right),C^\alpha
\tau_1\right\rangle. \end{eqnarray*}From the previous calculations, it follows that $\Upsilon=0$ and so $\mathcal B\ts=\|A\cc\|\ngr^2$. In other words, we have\begin{eqnarray*}\label{svex}  II_\UU(W,\per)= \int_{\UU}\left(
 |\qq\cn|^2  -\cn^2 \|A\cc\|\ngr^2 \right)\,\per=\int_{\UU}\left(
 |\qq\cn|^2  -\cn^2 \frac{\varpi_{\alpha'}^2\|C\ss^{\alpha'}\|\ngr^2}{4} \right)\,\per\end{eqnarray*}for any  non-characteristic  bounded domain $\UU\subset \mathcal I_{\alpha'}(\equiv S)$, where $\per\res \mathcal I_{\alpha'}=\frac{|C\cc^{\alpha'} x\cc|}{2}\,d{\mathcal L}\eu^{n-1} \res \mathcal I_{\alpha'}$ and  $$d\mathcal L\eu^{n-1} =dx_1\wedge dx_2\wedge...\wedge...\wedge\widehat{dx_{\alpha'}}\wedge...\wedge dx_n.$$
It goes without saying that the previous formula holds true near the characteristic set only under the assumptions made in Corollary \ref{corvar2stat}. In particular, we have to check that $\int_\UU\frac{1}{|\PH\nu|^4}\,\sigma\rr^{n-1}<+\infty$, which is clearly equivalent to the next condition: \begin{equation}\label{bhhb}\int_\UU\frac{1}{|C\cc^{\alpha'} x\cc|^4}\,d\mathcal L\eu^{n-1}\res \mathcal I_{\alpha'}<+\infty.\end{equation}

Two remarks are in order:
\begin{itemize}
 \item a necessary condition for the validity of \eqref{bhhb} is that the dimension of $\HH$ is $\geq 5$, i.e. \begin{equation}\label{bhhb2} \DH=\dim\HH\geq 5;
\end{equation}
\item in the Heisenberg group $\mathbb H^n$, the previous analysis reduces to the case of the horizontal hyperplane $\left\lbrace p=\exp(z,t)\in\mathbb H^n: t=0 \right\rbrace$ and, in this case, \eqref{bhhb2} is also  sufficient  for \eqref{bhhb} to hold.
\end{itemize}

Now let us compute\begin{eqnarray*}\Delta\ss\left(\dfrac{1}{|C\cc^{\alpha'}x\cc|}\right)&=&-\div\ss\left(\dfrac{
C\cc^{\alpha'}\nn}{|C\cc^{\alpha'}x\cc|^2}\right)\\&=&\dfrac{2|C\cc^{\alpha'}\nn|^2}{|C\cc^{\alpha'}x\cc|^3}-\dfrac{\div\ss(C\cc^{\alpha'}\nn)}{|C\cc^{\alpha'}x\cc|^2}\\&=&\dfrac{2|C\cc^{\alpha'}\nn|^2}{|C\cc^{\alpha'}x\cc|^3}+\dfrac{\sum_{j, k
\in I\ss}\left\langle\gc_{\tau_j}\tau_1,\tau_k\right\rangle\left\langle C\cc^{\alpha'}{\tau_j},\tau_k\right\rangle}{|C\cc^{\alpha'}x\cc|^2}\\&=&\dfrac{2|C\cc^{\alpha'}\nn|^2-\|C\ss^{\alpha'}\|^2\ngr}{|C\cc^{\alpha'}x\cc|^3}.\end{eqnarray*}From this computation and the very definition of $\lll$, it follows that$$\lll\left(\dfrac{1}{|C\cc^{\alpha'}x\cc|}\right)=\Delta\ss\left(\dfrac{1}{|C\cc^{\alpha'}x\cc|}\right) +  \left\langle C\cc(\varpi\cd)\nn,
\qq\left(\dfrac{1}{|C\cc^{\alpha'}x\cc|}\right)\right\rangle =-\dfrac{\|C\ss^{\alpha'}\|^2\ngr}{|C\cc^{\alpha'}x\cc|^3},$$which is equivalent to the equation $\lll \varpi_{\alpha'}=-\varpi_{\alpha'} \|A\cc\|\ngr^2$, as predicated by Lemma \ref{vnct}.

The previous discussion is summarized in the following:
\begin{corollario}\label{c1} Let $\GG$ be a $2$-step Carnot group and let $\mathcal I_{\alpha'}$ be a horizontal hyperplane passing through $0\in\GG$. Then  $\mathcal I_{\alpha'}$ is a $\cin$-smooth  $\HH$-minimal hypersurface. The only characteristic point of $\mathcal I_{\alpha'}$ is the identity $0\in\GG$. Furthermore,  any bounded  domain $\UU\Subset\mathcal I_{\alpha'}\setminus \{0\}$ turns out to be strictly  stable.
\end{corollario}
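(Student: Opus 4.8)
The corollary is in essence a summary of the explicit computations carried out in the paragraphs that precede it, so the plan is to organize those into four steps. First I would record the structural facts. Since $\mathcal I_{\alpha'}=\{x\in\GG:x_{\alpha'}=0\}$ is a linear hyperplane of $\Rn\cong\gg$, it is a $\cin$-smooth embedded hypersurface; taking $f:=x_{\alpha'}$ as a defining function and using the explicit form of the frame $\underline{X}$ one gets $\grad\cc f=-\tfrac12 C\cc^{\alpha'}x\cc$, hence $\nn=-C\cc^{\alpha'}x\cc/|C\cc^{\alpha'}x\cc|$ and $|\PH\nu|=\tfrac12|C\cc^{\alpha'}x\cc|$ wherever $C\cc^{\alpha'}x\cc\neq 0$; thus the characteristic set is $\{x\in\mathcal I_{\alpha'}:C\cc^{\alpha'}x\cc=0\}$, which reduces to the identity $0\in\GG$. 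For $\HH$-minimality I would feed this into \eqref{gghh}: skew-symmetry of $C\cc^{\alpha'}$ gives $\div\cc(C\cc^{\alpha'}x\cc)={\rm Tr}\,C\cc^{\alpha'}=0$, while $\grad\cc|C\cc^{\alpha'}x\cc|=C\cc^{\alpha'}\nn$ is orthogonal to $\nn$, so $\langle\grad\cc(1/|C\cc^{\alpha'}x\cc|),C\cc^{\alpha'}x\cc\rangle=0$; hence $\MH=-\div\cc\nn=0$ off the characteristic point.

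Next I would assemble the quantity $\mathcal B\ts$ from Notation \ref{415}. Differentiating the normal gives $-\mathcal J\cc\nn=(C\cc^{\alpha'}+\nn\otimes C\cc^{\alpha'}\nn)/|C\cc^{\alpha'}x\cc|$, from which $S\cc=\mathbf{0}\cc$ and $B\cc=A\cc=C\ss^{\alpha'}/|C\cc^{\alpha'}x\cc|$ on $\HS$, so $\|B\cc\|\ngr^2=\|A\cc\|\ngr^2=\varpi_{\alpha'}^2\|C\ss^{\alpha'}\|\ngr^2/4$. Then I would substitute $\varpi_\beta=0$ for $\beta\neq\alpha'$, $\varpi_{\alpha'}=2/|C\cc^{\alpha'}x\cc|$ and $C(\varpi)=\varpi_{\alpha'}C^{\alpha'}$ into $\Upsilon=-\sum_{\alpha\in I\vv}\langle 2\qq\varpi_\alpha-C(\varpi)\TB_\alpha,C^\alpha\tau_1\rangle$ and check that it vanishes; with \eqref{tranx} this yields $\mathcal B\ts=\|A\cc\|\ngr^2\geq 0$.

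For the stability claim I would fix a bounded domain $\UU\Subset\mathcal I_{\alpha'}\setminus\{0\}$. There $1/|\PH\nu|=2/|C\cc^{\alpha'}x\cc|$ is bounded, so hypotheses (i) and (iii) of Theorem \ref{2vg} are automatic for every compactly supported variation and Corollary \ref{corvar2stat} gives $II_\UU(W,\per)=\int_\UU\bigl(|\qq\cn|^2-\cn^2\mathcal B\ts\bigr)\,\per$. Since $\varpi_{\alpha'}>0$ on $\UU$ and $-\lll\varpi_{\alpha'}=\varpi_{\alpha'}\mathcal B\ts$ by Lemma \ref{vnct}, Lemma \ref{zxc} applied with $\psi=\varpi_{\alpha'}$ gives $\int_\UU\bigl(|\qq\varphi|^2-\varphi^2\mathcal B\ts\bigr)\,\per\geq 0$ for all compactly supported $\varphi$, hence $II_\UU(W,\per)\geq 0$, which is precisely Theorem \ref{MRS} in this instance. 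For strictness I would trace the equality case in the proof of Lemma \ref{zxc}: equality forces the Cauchy--Schwarz step to be an equality, i.e. $\qq\cn=\cn\,\qq(\log\varpi_{\alpha'})$ pointwise, equivalently $\qq(\cn/\varpi_{\alpha'})=0$; since $\cn$ has compact support in $\UU$ this forces $\cn\equiv 0$, so $II_\UU(W,\per)>0$ for every nonzero variation.

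I expect the two delicate points to be, first, the verification $\Upsilon=0$ --- elementary but requiring careful bookkeeping with $C(\varpi)\TB_\alpha$ and the explicit form of $\nn$ in the $2$-step setting --- and, second, the rigidity step in the strictness argument, where one needs that $\qq g=0$ on the connected non-characteristic domain $\UU$ forces $g$ to be constant; this is the sub-Riemannian analogue of the Riemannian Fischer--Colbrie--Schoen rigidity and relies on $\HS$ being bracket-generating along $\mathcal I_{\alpha'}\setminus\{0\}$. As an independent consistency check I would also re-derive $\lll\varpi_{\alpha'}=-\varpi_{\alpha'}\|A\cc\|\ngr^2$ directly via the computation of $\Delta\ss(1/|C\cc^{\alpha'}x\cc|)$ indicated in the text, confirming agreement with Lemma \ref{vnct}.
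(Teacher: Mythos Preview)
Your outline matches the paper: Corollary~\ref{c1} is presented there simply as a summary of the computations preceding it, and you reproduce those computations (the form of $\nn$, $\MH=0$, $S\cc=\mathbf 0$, $\Upsilon=0$, hence $\mathcal B\ts=\|A\cc\|\ngr^2$) and then invoke Lemma~\ref{vnct} and Lemma~\ref{zxc}/Theorem~\ref{MRS} with $\psi=\varpi_{\alpha'}>0$ for stability. The paper does not supply a separate argument for \emph{strict} stability --- it merely asserts it --- so your equality-case analysis of Lemma~\ref{zxc} goes beyond what the text provides.

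Two caveats. First, your claim that $\{x\in\mathcal I_{\alpha'}:C\cc^{\alpha'}x\cc=0\}$ ``reduces to the identity $0\in\GG$'' is unjustified and in fact fails when $\dim\VV\geq 2$: even if $C\cc^{\alpha'}$ is invertible (forcing $x\cc=0$), the remaining vertical coordinates $x_\beta$ for $\beta\in I\vv\setminus\{\alpha'\}$ are unconstrained, so $C_S$ has dimension at least $\dim\VV-1$. The paper is equally loose here. Second, and more substantively, your strictness argument appeals to $\HS$ being bracket-generating in $\TS$, but this already fails in $\mathbb H^1$, where $\dim\HS=1$ and the distribution is trivially involutive. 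The rigidity conclusion $\qq(\cn/\varpi_{\alpha'})=0\Rightarrow\cn\equiv 0$ is still salvageable, but by a different mechanism: the horizontal radial field $\sum_{i\in I\cc}x_iX_i$ always lies in $\HS$ (since $\langle x\cc,C\cc^{\alpha'}x\cc\rangle=0$) and coincides with the Euclidean radial field $\sum_i x_i\partial_{x_i}$, so its integral curves scale $x\cc$ and therefore exit any compact subset of the non-characteristic domain $\UU$; constancy along them plus compact support then forces vanishing. That escape-to-the-boundary argument, not bracket-generation, is what is needed here.
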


\subsection{An example in the Heisenberg group $\mathbb H^n$}\label{exf2}
For the notation used in this section we refer the reader to Example \ref{hng} and Example \ref{hng2}.
We recall that any point $p\in\mathbb H^n$ is identified with $(z,t)\in\R^{2n+1}$, where $z=(x_1,y_1,x_2,y_2,...,x_n,y_n)$.
We use the following further notation: $$\overline{v}^{1,0}:=(v_1,0,v_2,0,...,v_n,0)\in{\R^{2n}},\qquad\overline{v}^{0,1}:=(0,v_1,0,v_2,0,...,0, v_n)\in{\R^{2n}}\qquad\forall\,\,v=(v_1,v_2,...,v_n)\in\Rn.$$Using this notation yields $z=\overline{x}^{1,0}+\overline{y}^{0,1}\in\R^{2n}$, where $x=(x_1,x_2,...,x_n)\in\Rn$ and $y=(y_1,y_2,...,y_n)\in\Rn$.\\
In the sequel, we shall study the following \it hyperbolic paraboloid: \rm \begin{equation}
 \label{sex}
S:=\left\lbrace p\equiv(z,t)\in\mathbb H^n: t=\dfrac{\left\|\overline{x}^{1,0}\right\|_{\Rn}^2-\left\|\overline{y}^{1,0}\right\|_{\Rn}^2}{4}\right\rbrace,
\end{equation}
  First, note that $\grad\cc t=\frac{z^\circ}{2}$, where  $z^\circ:=-{C\cc^{2n+1} z}$. Furthermore, a simple calculation shows that
$\grad\cc\left( \frac{\left\|\overline{x}^{1,0}\right\|_{\Rn}^2-\left\|\overline{y}^{1,0}\right\|_{\Rn}^2}{4}\right)=\frac{1}{2}\left(\overline{x}^{1,0}-
\overline{y}^{0,1} \right)$ and hence $\nn= \frac{-\overline{v}^{1,0}+\overline{v}^{0,1}}{|-\overline{v}^{1,0}+\overline{v}^{0,1}|}$, where we have set $v=x+y\in\Rn$. Therefore $$\nn=\dfrac{\sqrt{2}}{2}\left(\dfrac{-\overline{v}^{1,0}+\overline{v}^{0,1}}{\sqrt{\rho^2 + 2\left\langle x, y\right\rangle_{\Rn}}}\right),\qquad\nn^\circ=-\dfrac{\sqrt{2}}{2}\left(\dfrac{\overline{v}^{1,0}+\overline{v}^{0,1}}{\sqrt{\rho^2 + 2\left\langle x, y\right\rangle_{\Rn}}}\right),$$ where $\sqrt{\rho^2 + 2\left\langle x, y\right\rangle_{\Rn}}=\|x+y\|_{\Rn}$ and $\rho:=\sqrt{\|x\|_{\Rn}^2+\|y\|_{\Rn}^2}$. Clearly, the characteristic set $C_S$ of $S$ is the set of all points $p\equiv(z,t)\in S$ such that $\overline{x+y}^{1,0}=\overline{x+y}^{0,1}=0\in\R^{2n}$.  Hence  $p\equiv(z,t)\in C_S$ if, and only if $x_i=-y_i$ for every $i=1,...,n$. Since $X_i\left( \frac{1}{\|x+y\|_{\Rn}}\right)=Y_i\left( \frac{1}{\|x+y\|_{\Rn}}\right)$, we easily get  that $\div\cc\nn=0$, i.e. $S$ is \it  $\HH$-minimal. \rm

We have $\varpi=\frac{\sqrt{2}}{\|x+y\|_{\Rn}}$ and $\frac{\partial\varpi}{\partial\nn^\circ}=\frac{2}{\|x+y\|^2_{\Rn}}.$
In order to calculate the horizontal 2nd fundamental form $B\cc$ (and some of its invariants) we need  the horizontal Jacobian matrix $\mathcal J\cc \nn=:\left[ a_{i j} \right]_{i, j\in  I\cc} $ of the  $\HH$-normal $\nn$. For the sake of simplicity, we treat the case $n=2$, which corresponds to the 2nd Heisenberg group. The general case is completely analogous. We have
\begin{itemize}
 \item $a:=a_{11}=a_{12}=-\frac{\sqrt{2}}{2}\left(\frac{\|x+y\|^2_{\R^2}-(x_1+y_1)^2}{\|x+y\|^3_{\R^2}} \right) $,\quad $b:=a_{13}=a_{14}=-\frac{\sqrt{2}}{2}\left(\frac{-(x_1+y_1)(x_2+y_2)}{\|x+y\|^3_{\R^2}} \right) $;
\item $a_{2j}=-a_{1j}$ for every $j=1,...,4$;
\item $a_{31}=a_{32}=-\frac{\sqrt{2}}{2}\left(\frac{-(x_1+y_1)(x_2+y_2)}{\|x+y\|^3_{\R^2}} \right)$, \quad $c:=a_{33}=a_{34}=-\frac{\sqrt{2}}{2}\left(\frac{\|x+y\|^2_{\R^2}-(x_2+y_2)^2}{\|x+y\|^3_{\R^2}} \right)$;\item $a_{4j}=-a_{3j}$ for every $j=1,...,4$.
\end{itemize}
Equivalently,  $\mathcal J\cc \nn=\left[
\begin{array}{cccc}
  a & a &  b &  b \\
 -a & -a & -b & -b \\
   b & b & c & c \\
  -b & -b & -c & -c
\end{array}%
\right]$. It follows that $\nn\in{\mathrm Ker}\mathcal J\cc \nn$ and hence  $B\cc=-\mathcal J\cc \nn$. By definition, we have
$S\cc= -\left(\frac{\mathcal J\cc \nn +\left(\mathcal J\cc \nn\right)^{\rm Tr}}{2}\right)=-\left[
\begin{array}{cccc}
  a & 0 &  b &  0 \\
 0 & -a & 0 & -b \\
   b & 0 & c & 0 \\
  0 & -b & 0 & -c
\end{array}%
\right] $. So if $n=2$, we have$$\|B\cc\|\ngr^2=4\left( a^2+2b^2+c^2\right)=\frac{2}{\|x+y\|^2_{\Rn}},\qquad \|S\cc\|\ngr^2=2\left( a^2+2b^2+c^2\right)=\frac{1}{\|x+y\|^2_{\Rn}}=\|A\cc\|\ngr^2.$$In the general case, an analogous calculation gives
$\|B\cc\|\ngr^2=\frac{2(n-1)}{\|x+y\|^2_{\Rn}}$ and $\|S\cc\|\ngr^2=\|A\cc\|\ngr^2=\frac{n-1}{\|x+y\|^2_{\Rn}}$.
Therefore, using \eqref{sadf} yields \begin{eqnarray*}
\mathcal B\ts&=&\|S\cc\|\ngr^2-\left(
2\frac{\partial\varpi}{\partial\nn^\circ} -\frac{n+1}{2}\varpi^2
\right)\\&=&\frac{n-1}{\|x+y\|^2_{\Rn}}-\left(
 \frac{4}{\|x+y\|^2_{\Rn}} -\frac{n+1}{\|x+y\|^2_{\Rn}}
\right)\\&=&\frac{2(n-2)}{\|x+y\|^2_{\Rn}}.
\end{eqnarray*}

\begin{oss}[$ B\ts$ can be negative!]\label{urme} If $n=1$, then $ B\ts<0$ for any non-characteristic  domain  $\UU\subset S$. 
\end{oss}

The previous calculation implies that
\begin{eqnarray}\label{svex}  II_\UU(W,\perh)= \int_{\UU}\left(
 |\qq\cn|^2  -\cn^2\frac{2(n-2)}{\|x+y\|^2_{\Rn}} \right)\,\perh,\end{eqnarray}for any non-characteristic bounded domain  $\UU\subset S$, where $\perh=\frac{\|x+y\|_{\Rn}}{\sqrt{2}}\,dz$ and we have set $$dz=dx_1\wedge dy_1\wedge...\wedge dx_n\wedge dy_n.$$

\begin{oss}[Failure of $\int_\UU\frac{1}{|\PH\nu|^4}\,\sigma\rr^{n-1}<+\infty$ for characteristic domains]In order to apply the previous 2nd variation formula for a characteristic domain $\UU \subset S$, we need (at least) to check that \begin{equation}\label{bhhb22}\int_\UU\frac{1}{\|x+y\|_{\Rn}^4} \,d\mathcal L\eu^{2n}< +\infty.\end{equation}However, in general, this condition fails to hold if $C_\UU\neq \emptyset$.
\end{oss}

Lemma \ref{ljjjkl} says $\Delta\ss\varpi=\Delta\cc\varpi-\left\langle{\rm Hess}\cc\varpi\,\nn,\nn\right\rangle$. Since
$\grad\cc\varpi=-\sqrt{2}\left( \frac{\overline{(x+y)}^{1,0}+\overline{(x+y)}^{0,1}}{\|x+y\|_{\Rn}^3}\right)$, we easily get that $\Delta\cc\varpi=-\varpi\frac{(2n-3)}{\|x+y\|_{\Rn}^2}$. Furthermore
$$\left\langle{\rm Hess}\cc\varpi\,\nn,\nn\right\rangle=\left\langle\left[
\begin{array}{ccccc}
  1 & 1 &  0 &  0 & \ldots \\
 1 & 1 & 0 & 0 & \ldots \\
   0 & 0 & 1 & 1& \ldots  \\
  0 & 0 & 1 & 1& \ldots\\ \ldots& \ldots&\ldots&\ldots&\ldots
\end{array}%
\right]\,\nn,\nn\right\rangle=-\frac{\varpi}{\|x+y\|_{\Rn}^2}.$$All together,  we have shown that $$\lll\varpi=\Delta\ss\varpi-\varpi\frac{\partial\varpi}{\partial\nn^\circ}=-\varpi\frac{2(n-2)}{\|x+y\|^2_{\Rn}},$$
 which illustrates the content of Lemma \ref{vnct}.

\begin{corollario}\label{c3} Let $S=\Bigg\lbrace p\equiv(z,t)\in\mathbb H^n: t=\frac{\|\overline{x}^{1,0} \|_{\Rn}^2- \|\overline{y}^{1,0} \|_{\Rn}^2}{4}\Bigg\rbrace$, where  $z:=\overline{x}^{1,0}+\overline{y}^{0,1}\in\R^{2n}$.  Then $S$ turns out to be a $\cin$-smooth $\HH$-minimal hypersurface. Furthermore, one has $$C_S=\left\{p=\exp(z,t)\in S:  x_i=-y_i,\,\,i=1,...,n\right\}.$$ Finally, any  bounded  domain $\UU\Subset S\setminus C_S$ is  strictly  stable.
\end{corollario}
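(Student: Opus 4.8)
The assertions split naturally into the three claims of the statement, and the plan is to read off the first two from the explicit computations already performed in this subsection, and then to establish the only new point, strict stability. For smoothness I would simply observe that $S$ is the zero set of the polynomial $f(z,t):=t-\tfrac14\big(\|\overline{x}^{1,0}\|_{\Rn}^2-\|\overline{y}^{1,0}\|_{\Rn}^2\big)$ with $\partial_t f\equiv1$, so $S$ is an embedded $\cin$-smooth hypersurface (a vertical graph). Writing $v:=x+y$, the computations above give the unit $\HH$-normal $\nn=\frac{-\overline{v}^{1,0}+\overline{v}^{0,1}}{\sqrt2\,\|v\|_{\Rn}}$, whence $|\PH\nu|=0$ precisely when $v=0$, i.e. precisely when $x_i=-y_i$ for every $i$ (this identifies $C_S$), while the identities $X_i(1/\|v\|_{\Rn})=Y_i(1/\|v\|_{\Rn})$ give $\MS=-\div\cc\nn=0$, so $S$ is $\HH$-minimal. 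This settles the first two claims.

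For the third, the crucial observation is that on $S^\ast:=S\setminus C_S$ the vertical component $\varpi=\varpi_T=\sqrt2\,\|x+y\|_{\Rn}^{-1}$ is everywhere strictly positive. Since the centre of $\mathbb H^n$ is one-dimensional, $I\vv$ reduces to the single index $2n+1$, so the hypothesis of Theorem \ref{MRS} is satisfied and every non-characteristic domain of $S$ is stable. To make this quantitative for a fixed bounded domain $\UU\Subset S^\ast$ I would note that $|\PH\nu|\ge\delta>0$ on the compact set $\overline{\UU}$, and that by $\cont^3$-continuity of a variation $\vartheta$ with $\vartheta_0=\imath$ one has $|\P\ct\nu^t|\ge\delta/2$ on $\UU$ for all small $t$; thus conditions (i) and (iii) of Theorem \ref{2vg} are automatic on $\UU$, condition (ii) holds because $\MS\equiv0$, and Corollary \ref{corvar2stat} gives, for a compactly supported variation with variation vector $W$ and $\cn:=\langle W\op,\nu\rangle/|\PH\nu|$,
\[ II_\UU(W,\perh)=\int_\UU\big(|\qq\cn|^2-\cn^2\,\mathcal B\ts\big)\,\perh, \qquad \mathcal B\ts=\frac{2(n-2)}{\|x+y\|_{\Rn}^2}. \]
By Lemma \ref{vnct} (with $\MS$ constant) one has $\lll\varpi=-\varpi\,\mathcal B\ts$, so $\varpi>0$ solves $\lll\psi=q\psi$ with $q:=-\mathcal B\ts$; applying Lemma \ref{zxc} with $\psi=\varpi$ and test function $\cn$ then yields $II_\UU(W,\perh)\ge0$, i.e. $\UU$ is stable.

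The one delicate step --- and the main obstacle --- is to promote this to strict stability, i.e. $II_\UU(W,\perh)>0$ for every non-trivial variation ($\cn\not\equiv0$). For this I would look inside the proof of Lemma \ref{zxc} with $\phi:=\log\varpi$: the only inequality invoked there is $2|\cn\langle\qq\cn,\qq\phi\rangle|\le\cn^2|\qq\phi|^2+|\qq\cn|^2$, and equality in the resulting estimate forces $\qq\cn=\cn\,\qq\phi$ pointwise on $\UU$, that is $\qq(\cn/\varpi)\equiv0$. Arguing as in the treatment of vertical hyperplanes (Corollary \ref{c0}), this makes $\cn/\varpi$ constant on the connected domain $\UU$, and since $\cn$ is compactly supported while $\varpi>0$ the constant must be $0$, hence $\cn\equiv0$. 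Therefore $II_\UU(W,\perh)>0$ whenever $\cn\not\equiv0$, so $\UU$ is strictly stable. (For $n\le2$ one even has $\mathcal B\ts\le0$ on $\UU$, so this last step is trivial; the argument above is uniform in $n$.) As a consistency check, computing $\Delta\ss\varpi$ directly via Lemma \ref{ljjjkl} reproduces $\lll\varpi=-\varpi\,\mathcal B\ts$ with the above value of $\mathcal B\ts$, in accordance with Lemma \ref{vnct}.
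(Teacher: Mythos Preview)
Your proposal is correct and follows the paper's route: the explicit computations of Section \ref{exf2} give smoothness, $\HH$-minimality, the identification of $C_S$, and $\varpi>0$, whence stability via Theorem \ref{MRS} (equivalently, Lemma \ref{vnct} combined with Lemma \ref{zxc}). You add more detail than the paper on strict stability by examining the equality case of Lemma \ref{zxc}; note only that the implication ``$\qq(\cn/\varpi)\equiv0\Rightarrow\cn/\varpi$ constant'' tacitly uses that $\HS$ is bracket-generating on $S$ (true for $n\ge2$, while your $\mathcal B\ts<0$ observation disposes of $n=1$), a point the paper likewise leaves implicit.
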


\rm

{\footnotesize \noindent Francescopaolo Montefalcone:
\\Dipartimento di Matematica \\Universit\`a degli Studi di Padova,\\
  Via Trieste, 63, 35121 Padova (Italy)\,
 \\ {\it E-mail address}:  {\textsf montefal@math.unipd.it}}

\end{document}